\theoremstyle{definition}
\newtheorem* {theorem*}{Theorem}
\newtheorem* {conjecture*}{Conjecture}
\newtheorem{theorem}{Theorem}[section]
\theoremstyle{definition}
\newtheorem{hypothesis}[theorem]{Hypothesis}
\theoremstyle{definition}
\newtheorem* {example*}{Example}
\newtheorem{lemma}[theorem]{Lemma}
\theoremstyle{definition}
\newtheorem{definition}[theorem]{Definition}
\theoremstyle{definition}
\newtheorem{conjecture}[theorem]{Conjecture}
\newtheorem{proposition}[theorem]{Proposition}
\newtheorem{corollary}[theorem]{Corollary}
\newtheorem {remark}[theorem]{Remark}
\theoremstyle{definition}
\newtheorem {example}[theorem]{Example}
\theoremstyle{definition}
\theoremstyle{definition}
\theoremstyle{definition}
\theoremstyle{definition}
\def\modu{\ (\mathrm{mod}\ }
\def\({\left(}
\def\){\right)}
\newcommand{\CC}{\mathbb{C}}
\newcommand{\QQ}{\mathbb{Q}}
\newcommand{\cR}{\mathcal{R}}
\newcommand{\V}{\mathcal{V}}
\newcommand{\cS}{\mathcal{S}}
\newcommand{\cD}{\mathcal{D}}
\def\cS{\mathcal{S}}
\def\NN{\mathbb{N}}
\def\CC{\mathbb{C}}
\def\ZZ{\mathbb{Z}}
\def\spanning{\textnormal{-span}}
\newcommand{\cM}{\mathcal{M}}
\newcommand{\cN}{\mathcal{N}}
\def\fk{\mathfrak}
\def\barr{\begin{array}}
\def\earr{\end{array}}
\def\ba{\begin{aligned}}
\def\ea{\end{aligned}}
\def\be{\begin{equation}}
\def\ee{\end{equation}}
\def\Cyc{\mathrm{Cyc}}
\def\qquand{\qquad\text{and}\qquad}
\def\quand{\quad\text{and}\quad}
\def\qquord{\qquad\text{or}\qquad}
\newcommand{\Sym}{\operatorname{Sym}}
\def\cH{\mathcal H}
\def\cM{\mathcal M}
\def\DesR{\mathrm{Des}_R}
\def\DesL{\mathrm{Des}_L}
\def\DesV{\mathrm{Des}_V}
\def\hs{\hspace{0.5mm}}
\def\id{\mathrm{id}}
\def\PP{\mathbb{P}}
\def\dash{\hs\text{---}\hs}
\def\fkS{\fk S}
\def\ben{\begin{enumerate}}
\def\een{\end{enumerate}}
\def\cE{\mathcal E}
\def\cF{\mathcal F}
\def\hs{\hspace{0.5mm}}
\def\ellhat{\hat\ell}
\def\x{\textbf{x}}
\newcommand{\xRightarrow}[2][]{\ext@arrow 0359\Rightarrowfill@{#1}{#2}}
\renewcommand{\O}{\operatorname{O}}
\newcommand{\rank}{\operatorname{rank}}
\newcommand{\cA}{\mathcal{A}}
\def\iR{\hat\cR}
\def\ellhat{\hat\ell}
\def\iS{\hat \fkS}
\def\tS{\tilde S}
\def\tI{\tilde I}
\def\cG{\mathcal{G}}
\def\Inv{\operatorname{Inv}}
\def\HA{\cA_{\textsf{Hecke}}}
\def\iF{\hat{F}}
\def\tF{F}
\def\QSym{\textsf{QSym}}
\def\Sym{\textsf{Sym}}
\def\zetaq{\zeta_{\QSym}}
\def\Par{\textsf{Par}}
\def\tI{\tilde I}
\def\iPhi{\Upsilon}
\def\Refl{\textsf{Ref}}
\def\Cov{\textsf{Cov}}
\def\dotequals{\mathbin{\overset{_\bullet} =}}
\def\V{\textsf{Vir}}
\newcommand{\arc}[2]{ \ar @/^#1pc/ @{-} [#2] }
\def\arcstop{\endxy\ }
\def\arcstart{\ \xy<0cm,-.06cm>\xymatrix@R=.1cm@C=.2cm }
\newcommand{\arcstartc}[1]{\ \xy<0cm,-.15cm>\xymatrix@R=.1cm@C=#1cm}
\def\diagramAB{
\arcstart
{
*{\circ}     & *{\circ}
}
\arcstop}
\def\diagrambAC{
\arcstart
{
*{\bullet}    \arc{.5}{r}  & *{\circ} & *{\circ}
}
\arcstop
}
\def\diagramBaC{
\arcstart
{
*{\circ}    \arc{.5}{r}  & *{\bullet} & *{\circ}
}
\arcstop
}
\def\diagramcBA{
\arcstart
{
*{\bullet}    \arc{.5}{rr}  & *{\circ} & *{\circ}
}
\arcstop
}
\def\diagramACb{
\arcstart
{
*{\circ}     & *{\circ}  \arc{.5}{r} & *{\bullet}
}
\arcstop
}
\def\diagramAcB{
\arcstart
{
*{\circ}     & *{\bullet}  \arc{.5}{r} & *{\circ}
}
\arcstop
}
\def\diagramCBa{
\arcstart
{
*{\circ}     \arc{.5}{rr}   & *{\circ} & *{\bullet}
}
\arcstop
}
\def\diagramBaDc{
\arcstart
{
*{\circ}  \arc{.5}{r}   & *{\bullet}    & *{\circ} \arc{.5}{r}& *{\bullet}
}
\arcstop
}
\def\diagramBadC{
\arcstart
{
*{\circ}  \arc{.5}{r}   & *{\bullet}    & *{\bullet} \arc{.5}{r}& *{\circ}
}
\arcstop
}
\def\diagrambAdC{
\arcstart
{
*{\bullet}  \arc{.5}{r}   & *{\circ}    & *{\bullet} \arc{.5}{r}& *{\circ}
}
\arcstop
}
\def\diagrambADc{
\arcstart
{
*{\bullet}  \arc{.5}{r}   & *{\circ}    & *{\circ} \arc{.5}{r}& *{\bullet}
}
\arcstop
}
\def\diagramCDab{
\arcstart
{
*{\circ}  \arc{.5}{rr}   & *{\circ} \arc{.5}{rr}   & *{\bullet} & *{\bullet}
}
\arcstop
}
\def\diagramCdaB{
\arcstart
{
*{\circ}  \arc{.5}{rr}   & *{\bullet} \arc{.5}{rr}   & *{\bullet} & *{\circ}
}
\arcstop
}
\def\diagramcDAb{
\arcstart
{
*{\bullet}  \arc{.5}{rr}   & *{\circ} \arc{.5}{rr}   & *{\circ} & *{\bullet}
}
\arcstop
}
\def\diagramcdAB{
\arcstart
{
*{\bullet}  \arc{.5}{rr}   & *{\bullet} \arc{.5}{rr}   & *{\circ} & *{\circ}
}
\arcstop
}
\def\diagramDcBa{
\arcstart
{
*{\circ}  \arc{.5}{rrr}   & *{\bullet} \arc{.25}{r}   & *{\circ} & *{\bullet}
}
\arcstop
}
\def\diagramdCbA{
\arcstart
{
*{\bullet}  \arc{.5}{rrr}   & *{\circ} \arc{.25}{r}   & *{\bullet} & *{\circ}
}
\arcstop
}
\def\diagramDCba{
\arcstart
{
*{\circ}  \arc{.5}{rrr}   & *{\circ} \arc{.25}{r}   & *{\bullet} & *{\bullet}
}
\arcstop
}
\def\diagramdcBA{
\arcstart
{
*{\bullet}  \arc{.5}{rrr}   & *{\bullet} \arc{.25}{r}   & *{\circ} & *{\circ}
}
\arcstop
}
\numberwithin{equation}{section}
\renewcommand{\@makefnmark}{\mbox{\textsuperscript{}}}
\begin{document}
\title{Affine transitions for involution Stanley symmetric functions}
\author{
Eric MARBERG\thanks{
Email: \tt emarberg@ust.hk
}
\and
Yifeng ZHANG\thanks{
Email: \tt yzhangci@connect.ust.hk
}
}
\date{Department of Mathematics \\ Hong Kong University of Science and Technology}

\maketitle

\begin{abstract}
We study a family of symmetric functions $\hat F_z$
indexed by involutions $z$ in the affine symmetric group.
These power series
are analogues of
Lam's affine Stanley symmetric functions
and generalizations of the involution Stanley symmetric functions introduced by 
Hamaker, Pawlowski, and the first author.
Our main result is to prove a transition formula
for $\hat F_z$
which can be used to define an affine involution 
analogue of the
Lascoux-Sch\"utzenberger tree.
Our proof of this formula relies on Lam and Shimozono's
transition formula for affine Stanley symmetric functions
and some new technical properties of 
the strong Bruhat order on affine permutations.
\end{abstract}

\setcounter{tocdepth}{2}

\section{Introduction}

The notion of the \emph{Stanley symmetric function} $F_\pi$ of a permutation $\pi$
dates to work of Stanley \cite{Stan} in the 1980s.
These symmetric generating functions are of interest as
the stable limits of the \emph{Schubert polynomials} $\fkS_\pi$,
which represent the cohomology classes of certain orbit closures in the complete flag variety.
They are also a useful tool for calculating the number of reduced expressions for permutations.

Several variations of Stanley's construction have since appeared in the literature.
For example, around 2006, Lam \cite{Lam} introduced a larger family of symmetric functions $F_\pi$ indexed by permutations
$\pi$ in the affine symmetric group $\tS_n$.
These \emph{affine Stanley symmetric functions} represent
 cohomology classes for the affine Grassmannian  \cite[\S7]{Lam2008}.
More recent work of Hamaker, Pawlowski, and the first author \cite{HMP1,HMP4,HMP3}
studies the so-called \emph{involution Stanley symmetric functions}, which are indexed by self-inverse permutations $z=z^{-1}$ in the finite symmetric group $S_n$.
These power series arise as  the stable limits of certain
\emph{involution Schubert polynomials}
that coincide (up to a scalar factor) with polynomials
introduced by Wyser and Yong \cite{WY}
to represent the cohomology classes of orbit closures of the orthogonal group
acting on the complete flag variety.

The subject of this article is a family of symmetric functions $\iF_z$ indexed by affine involutions $z =z^{-1}\in \tS_n$,
generalizing both of the preceding constructions. 
(For a diagrammatic summary of the relationships between our new family 
and other kinds of Stanley symmetric functions, see Figure~\ref{fig1}.)
Our first results concern several equivalent definitions and basic properties of these ``affine'' involution Stanley symmetric functions.
For example, we prove that each $\iF_z$ has a triangular expansion
into both monomial symmetric functions and affine Schur functions, and we identify 
the leading terms in these decompositions.
Most of the results in this paper focus on algebraic and combinatorial properties of $\iF_z$, but
we expect that these power series are related to the geometry of affine analogues of certain symmetric varieties.

\begin{figure}[h]  
{\small
\[
\begin{tikzcd}
\boxed{\barr{c} \iF_z\ (z=z^{-1} \in \tS_n) \\ \text{affine involution Stanley}\earr}    
\arrow[d, Rightarrow] & 
\boxed{\barr{c} \iF_z\ (z=z^{-1} \in S_n) \\ \text{involution Stanley}\earr} \arrow[l, hook]   \arrow[ddr, Rightarrow]
 \arrow[d, Rightarrow]  & 
\boxed{\barr{c} \iS_z\ (z=z^{-1} \in S_n) \\ \text{involution Schubert}\earr} \arrow[l, squiggly] 
\arrow[d, Rightarrow]
\\
\boxed{\barr{c} F_\pi\ (\pi \in \tS_n) \\ \text{affine Stanley}\earr}
\arrow[d, Rightarrow]
 & 
\boxed{\barr{c} F_\pi\ (\pi \in S_n) \\ \text{Stanley}\earr}\arrow[d, Rightarrow]
\arrow[l, hook]    & 
\boxed{\barr{c} \fkS_\pi\ (\pi \in S_n) \\ \text{Schubert}\earr} \arrow[l, squiggly, crossing over] 
\\
\boxed{\barr{c} F_\lambda\ (\lambda_1 \leq n-1 ) \\ \text{affine Schur}\earr}
 & 
\boxed{\barr{c} s_\lambda\ (\lambda_i \leq n-i) \\ \text{Schur}\earr}
\arrow[l, hook] 
  & 
\boxed{\barr{c} P_\lambda\ (\lambda_i \leq n-2i+1) \\ \text{Schur $P$-functions}\earr}
\arrow[l, Rightarrow]
\end{tikzcd}
\]
}
\caption{Families of polynomials and symmetric functions of interest, with the new power series considered here in the top left.
The arrow
$\hookrightarrow$ means ``is a special case of'' while $\leadsto$ means ``has stable limit'' and $\Rightarrow$ means ``expands positively into.''
}
\label{fig1}
\end{figure}

The Stanley symmetric functions studied in \cite{HMP4,Lam,Stan} are noteworthy for their positivity properties.
For finite permutations $\pi \in S_n$,
the power series $\tF_\pi$ is always \emph{Schur positive}, i.e., an $\NN$-linear combination of Schur functions $s_\lambda$ \cite{EG}. Similarly, involution Stanley symmetric function indexed by finite permutations are $\NN$-linear combination of
\emph{Schur $P$-functions} \cite{HMP4}.
The Stanley symmetric functions $\tF_\pi$ indexed by affine permutation $\pi \in \tS_n$, while not always Schur positive, are at least ``affine Schur positive'' \cite{Lam2008} (see Section~\ref{aff-sect}).

One way to prove these positivity properties is via \emph{transition equations}: certain families of identities relating 
sums of Stanley symmetric functions indexed by Bruhat covers of a given permutation. 
Lam and Shimozono described transition equations for affine Stanley symmetric functions in \cite{LamShim}.
Transition equations for involution Stanley symmetric functions are derived in \cite{HMP3,HMP4,HMP5}.
Our results in Section~\ref{trans-sect} show how to extend the latter formulas to the affine case.
We expect that these identities will be useful in studying
the positivity properties of affine involution Stanley symmetric functions,
which are not yet well understood. 

In the usual Bruhat order on finite or affine permutations,
each covering relation arises from right multiplication by some transposition,
so every covering relation is naturally labeled by a reflection.
The statement of our affine transition formula depends on certain
\emph{covering transformations} $\tau^n_{ij}$ that play a role analogous to
right multiplication by a reflection in the Bruhat order restricted to involutions in $\tS_n$.
This paper is a sequel to \cite{M}, which develops the basic properties of the affine covering transformations $\tau^n_{ij}$.
Once these operators are identified,
the statement of
our transition formula is the obvious affine analogue of 
 formulas in \cite{HMP3,HMP4} for finite involutions.
However, our proofs requires new methods,
as inductive arguments in \cite{HMP3,HMP4}
that rely on the finiteness of $S_n$ 
cannot be applied to the affine case.
Specifically, we prove our transition formula as a corollary
of two technical theorems about the Bruhat order on affine involutions,
which we refer to as the \emph{covering property} (Theorem~\ref{tau-thm1}) and the \emph{toggling property} (Theorem~\ref{tau-thm3}).
Our proofs of these results depend on some computer calculations.

The involution Stanley symmetric functions studied in \cite{HMP4,HMP5}
come in two families: one indexed by all involutions in the finite symmetric group $S_n$
and related to the geometry of the orbits of the orthogonal group acting on the
complete flag variety, the other indexed by fixed-point-free involutions
and related to the geometry of the orbits of the symplectic group in the same space.
The power series studied in this paper are the affine analogues of the first family of 
symmetric functions. However, the second family also has an affine generalization and
 a similar transition formula. The properties
of such ``fixed-point-free involution Stanley symmetric functions'' 
are explored by the second author in a sequel to this paper \cite{Zhang}.

To conclude this introduction, we sketch an outline of the rest of this article.
The next section reviews some preliminaries on affine permutations and Stanley symmetric functions.
Section~\ref{invol-sect}
introduces our new family of \emph{affine involution Stanley symmetric functions} and surveys their basic properties.
In Section~\ref{trans-sect}, we discuss the affine transition formula of Lam and Shimozono 
and prove its involution analogue using the aforementioned covering and toggling properties. 
Section~\ref{tau-sect1}, finally, contains the proofs of 
our technical results
about
the Bruhat order on affine involutions.

\subsection*{Acknowledgements}

This work was partially supported by RGC Grant ECS 26305218.
We thank
Brendan Pawlowski for helpful conversations during the development of this paper.

\section{Affine permutations}\label{aff-sect}

Let $n$ be a positive integer. Write $\ZZ$ for the set of integers and define $[n] = \{1,2,\dots,n\}$.
Let $\NN = \{0,1,2,\dots\}$ and $\PP=\{1,2,3,\dots\}$ be the sets of nonnegative and positive integers.

\begin{definition}
The \emph{affine symmetric group} $\tS_n$ is the group of bijections $\pi:\ZZ \to \ZZ$
satisfying $\pi(i+n) = \pi(i)+n$ for all $i \in \ZZ$ and $\pi(1) + \pi(2) + \dots +\pi(n) = 1 + 2 + \dots +n$.
\end{definition}

We refer to elements of $\tS_n$ as \emph{affine permutations}.
A \emph{window} for an affine permutation $\pi \in \tS_n$ is a sequence of the form $[\pi(i+1), \pi(i+2),\dots,\pi(i+n)]$ where $i \in \ZZ$.
An element $\pi \in \tS_n$ is uniquely determined by any of its windows,
and a sequence of $n$ distinct integers is a window for some $\pi \in\tS_n$ if and only if the integers represent
each congruence class modulo $n$ exactly once.

Let $s_i $ for $i \in \ZZ$  be the unique element of $\tS_n$ that interchanges $i$ and $i+1$ while fixing every integer $j \notin \{i,i+1\} + n\ZZ$.
One has $s_i = s_{i+n}$ for all $i \in \ZZ$, and 
 $\{s_1,s_2,\dots,s_n\}$ generates the group $\tS_n$.
With respect to this generating set, $\tS_n$ is the affine Coxeter group of type $\tilde A_{n-1}$.
The parabolic subgroup $S_n = \langle s_1,s_2,\dots,s_{n-1}\rangle \subset \tS_n$
is the finite Coxeter group of type $A_{n-1}$; its elements
are
the permutations $\pi \in \tS_n$ with $\pi([n]) = [n]$.

A \emph{reduced expression} for $\pi \in \tS_n$ is a minimal-length factorization $\pi = s_{i_1}s_{i_2}\cdots s_{i_l}$.
The \emph{length} of $\pi \in \tS_n$, denoted $\ell(\pi)$, is the number of factors  in any of its reduced expressions.
The value of $\ell(\pi)$ is also the number of equivalence classes in the set
$
\Inv(\pi) = \{ (i,j) \in \ZZ \times \ZZ : i< j\text{ and }\pi(i)> \pi(j)\}
$
under the relation $\sim$
on $\ZZ\times \ZZ$ with $(a,b) \sim (a',b')$ if and only if $a-a' = b-b' \in n \ZZ$.

\begin{definition}
A reduced expression $\pi = s_{i_1}s_{i_2}\cdots s_{i_l}$ for an affine permutation
is \emph{cyclically decreasing} if $s_{i_j + 1} \neq s_{i_k}$ for all $1 \leq j < k \leq l$. 
An element $\pi \in \tS_n$ is cyclically decreasing if it has a cyclically decreasing reduced expression.
\end{definition}

Each cyclically decreasing element $\pi$ of the finite subgroup $S_n  \subset \tS_n$
has a unique reduced expression $s_{i_1}s_{i_2}\cdots s_{i_l}$ 
which is decreasing in the sense that $n > i_1 > i_2 > \dots >i_l >0$.
In general, an
affine permutation $\pi \in \tS_n$ may have more than one cyclically decreasing reduced expression.

\begin{definition}[Lam \cite{Lam}] \label{lam-def}
The (affine) Stanley symmetric function of $\pi \in \tS_n$ is 
\[ \tF_\pi = \sum_{\pi = \pi^1 \pi^2 \cdots } x_1^{\ell(\pi^1)} x_2^{\ell(\pi^2)} \cdots \in \ZZ[[x_1,x_2,\dots]]\]
where the sum is over all factorizations $\pi = \pi^1 \pi^2 \cdots$ 
of $\pi$
into countably many (possibly empty) cyclically decreasing factors
$\pi^i \in \tS_n$ such that $\ell(\pi) = \ell(\pi^1) + \ell(\pi^2) + \dots$.
\end{definition}

These functions are denoted $\tilde F_\pi$ in \cite{Lam}.
For $\pi \in S_n \subset \tS_n$, the power series $\tF_\pi$ 
 coincide (after an inversion of indices) with the symmetric functions introduced by Stanley in \cite{Stan}.
 
\begin{example}\label{stan-ex}
Suppose $n=4$ so that $s_1=s_5$.
There are two reduced expressions for the affine permutation $[0,3,6,1] = s_1s_2s_4s_3= s_1s_4s_2s_3 \in \tS_4$.
The distinct length-additive factorizations of this element into nontrivial cyclically decreasing factors are $(s_1)(s_2)(s_4)(s_3)$ and $(s_1)(s_4)(s_2)(s_3)$
and $(s_1s_4)(s_2)(s_3)$ and $(s_1)(s_2s_4)(s_3) = (s_1)(s_4s_2)(s_3)$ and $(s_1)(s_2)(s_4s_3)$,
so $\tF_{[0,3,6,1]} = 2m_{1^4} + m_{21^2}$,
where $m_\lambda$ denotes
the usual monomial symmetric function of a partition $\lambda$.

 \end{example}

Let $\pi \mapsto \pi^\circlearrowright$ denote the automorphism of $\tS_n$ with $s_i \mapsto s_i^\circlearrowright := s_{i+1}$.

\begin{proposition}[{Lam \cite[Proposition 18]{Lam}}]
If $\pi \in \tS_n$ then $\tF_{\pi^\circlearrowright} = \tF_\pi$.
\end{proposition}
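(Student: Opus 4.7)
The plan is to unfold Definition~\ref{lam-def} and construct a weight-preserving bijection between the indexing sets of the two power series. The map $\pi \mapsto \pi^\circlearrowright$ is by hypothesis an automorphism of $\tS_n$ that permutes the simple generators $\{s_1, \dots, s_n\}$ cyclically, so it preserves length: $\ell(\pi^\circlearrowright) = \ell(\pi)$ for all $\pi \in \tS_n$. Consequently, a factorization $\pi = \pi^1 \pi^2 \cdots$ is length-additive if and only if $\pi^\circlearrowright = (\pi^1)^\circlearrowright (\pi^2)^\circlearrowright \cdots$ is length-additive.

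Next, I would check that $\pi \mapsto \pi^\circlearrowright$ sends cyclically decreasing elements to cyclically decreasing elements. If $\pi = s_{i_1} s_{i_2} \cdots s_{i_l}$ is a reduced expression with $s_{i_j + 1} \neq s_{i_k}$ for all $1 \leq j < k \leq l$, then $\pi^\circlearrowright = s_{i_1 + 1} s_{i_2 + 1} \cdots s_{i_l + 1}$ is also a reduced expression (same length), and the defining condition for being cyclically decreasing is preserved under the uniform index shift $i \mapsto i+1$, since $s_{(i_j + 1) + 1} \neq s_{i_k + 1}$ holds iff $s_{i_j + 1} \neq s_{i_k}$.

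Combining these two observations, the operation sending a length-additive cyclically decreasing factorization $\pi = \pi^1 \pi^2 \cdots$ of $\pi$ to $\pi^\circlearrowright = (\pi^1)^\circlearrowright (\pi^2)^\circlearrowright \cdots$ is a bijection between the factorizations indexing $\tF_\pi$ and those indexing $\tF_{\pi^\circlearrowright}$, and it preserves the monomial weight $x_1^{\ell(\pi^1)} x_2^{\ell(\pi^2)} \cdots$ term by term. The identity $\tF_{\pi^\circlearrowright} = \tF_\pi$ follows immediately.

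There is no real obstacle here — the only subtlety to verify carefully is that the cyclic-shift condition on a reduced expression really does transport across $\pi \mapsto \pi^\circlearrowright$, which boils down to the fact that the definition of ``cyclically decreasing'' is invariant under the rotation of indices modulo $n$.
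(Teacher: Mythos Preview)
Your argument is correct: the rotation automorphism permutes the simple generators, hence preserves length and reduced expressions, and the cyclically decreasing condition is manifestly invariant under the uniform shift $i\mapsto i+1$ of indices modulo $n$; this gives a weight-preserving bijection of factorizations and proves the identity. Note that the paper does not supply its own proof of this proposition---it is simply quoted as \cite[Proposition~18]{Lam}---so there is no alternative argument in the paper to compare against; your direct verification is exactly the natural (and essentially only) proof.
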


One can  motivate the definition of $\tF_\pi$ using the theory of \emph{combinatorial coalgebras} from \cite{ABS}.
Define a combinatorial coalgebra $(C,\zeta)$ to be a graded, connected $\QQ$-coalgebra $C$ with a
linear map $\zeta : C \to \QQ$.
A morphism of combinatorial coalgebras $(C,\zeta) \to (C',\zeta')$ is a morphism of 
graded coalgebras $\phi: C \to C'$ satisfying $\zeta = \zeta'\circ \phi$.

For $\pi \in \tS_n$, we write $\pi \dotequals  \pi'\pi''$ to mean that $\pi',\pi'' \in \tS_n$, $\pi = \pi'\pi''$,
and $\ell(\pi) = \ell(\pi') + \ell(\pi'')$.

\begin{proposition}\label{coalg-prop}
The graded vector space $\QQ \tS_n$, in which $\pi \in \tS_n$ is homogeneous of degree $\ell(\pi)$,
is a graded, connected coalgebra with coproduct 
$\Delta(\pi) = \sum_{\pi \dotequals  \pi'\pi''} \pi' \otimes \pi''$ for $\pi \in \tS_n$.
\end{proposition}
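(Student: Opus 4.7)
The plan is to verify the three axioms of a graded connected coalgebra: that $\Delta$ respects the length grading, that there exists a counit, and that $\Delta$ is coassociative. Connectedness is automatic, since the only affine permutation of length zero is the identity, so the degree-zero part of $\QQ\tS_n$ is $\QQ\cdot 1$. Likewise the grading of $\Delta$ is immediate: by the very definition of $\pi \dotequals \pi'\pi''$, each summand of $\Delta(\pi)$ belongs to $(\QQ\tS_n)_a \otimes (\QQ\tS_n)_b$ for some $a+b = \ell(\pi)$.

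Next I would introduce the counit $\epsilon\colon \QQ\tS_n \to \QQ$ defined by $\epsilon(\pi) = 1$ if $\pi = 1$ and $\epsilon(\pi) = 0$ otherwise, and verify the counit axioms. For any $\pi \in \tS_n$, the computation $(\epsilon \otimes \id)\Delta(\pi) = \sum_{\pi \dotequals \pi'\pi''} \epsilon(\pi')\, \pi''$ retains only the term with $\pi' = 1$, which forces $\pi'' = \pi$ and $\ell(\pi'') = \ell(\pi)$; this gives $\pi$, and the right-hand counit identity is symmetric.

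The substantive step is coassociativity. I would expand both $(\Delta \otimes \id)\Delta(\pi)$ and $(\id \otimes \Delta)\Delta(\pi)$ and show that each equals the common triple sum $\sum \pi_1 \otimes \pi_2 \otimes \pi_3$ indexed by ordered factorizations $\pi = \pi_1\pi_2\pi_3$ with $\ell(\pi) = \ell(\pi_1) + \ell(\pi_2) + \ell(\pi_3)$. The identification rests on the fact, valid in any Coxeter group, that length-additive factorization is associative: whenever $\pi = \alpha\gamma$ with $\ell(\pi) = \ell(\alpha) + \ell(\gamma)$ and $\alpha = \alpha'\alpha''$ with $\ell(\alpha) = \ell(\alpha') + \ell(\alpha'')$, concatenating reduced expressions shows that $\pi = \alpha'\alpha''\gamma$ is also length-additive, and conversely any length-additive triple factorization groups either way. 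Hence the notation $\pi \dotequals \pi_1\pi_2\pi_3$ is unambiguous, and both iterated coproducts collapse to the same expression.

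I do not anticipate a serious obstacle: the entire argument is bookkeeping on top of the associativity of length-additivity in Coxeter groups, and no structural input specific to $\tS_n$ beyond the existence of reduced expressions is required. The only place one might pause is in confirming that both iterated coproducts range over exactly the same set of triples, which is what the associativity of $\dotequals$ ensures.
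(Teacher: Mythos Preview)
Your proof is correct and follows the same approach as the paper, which simply remarks that the axioms are easy to check directly from the associativity of group multiplication. The paper additionally notes that this coalgebra is the graded dual of the $0$-Hecke algebra of $\tS_n$, but your direct verification is exactly what the paper has in mind.
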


\begin{proof}
The required axioms are easy to check directly using the associativity of group multiplication.
This coalgebra is just the graded dual of the $0$-Hecke algebra of $\tS_n$.
\end{proof}

Let $\QSym \subset \QQ[[x_1,x_2,\dots]]$ denote the (commutative) graded, connected Hopf algebra of quasi-symmetric functions over $\QQ$ (see \cite[\S3]{ABS}),
and write 
$\zetaq$
for the algebra homomorphism $\QSym \to \QQ$ which sets $x_1=1$ and $x_2=x_3=\dots= 0$.
Define $\zeta_{\textsf{CD}} : \QQ \tS_n \to \QQ$ to be the linear map with 
$\zeta_{\textsf{CD}}(\pi)=1$ if $\pi \in \tS_n$ is cyclically decreasing and with $\zeta_{\textsf{CD}}(\pi)=0$
for all other permutations $\pi \in \tS_n$.
The definition of $\tF_\pi$ is algebraically natural in view of the following.

\begin{proposition}\label{abs-prop}
The
linear map with $\pi \mapsto \tF_\pi$ for $\pi \in \tS_n$
 is the unique morphism of combinatorial coalgebras $(\QQ \tS_n,\zeta_{\textsf{CD}}) \to (\QSym,\zetaq)$.
 \end{proposition}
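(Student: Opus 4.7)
The plan is to deduce this from the universal property of $(\QSym,\zetaq)$ as the terminal object in the category of combinatorial coalgebras, established in \cite{ABS}. That property asserts that for any combinatorial coalgebra $(C,\zeta)$ there is a unique coalgebra morphism $\phi : (C,\zeta) \to (\QSym,\zetaq)$, and moreover that $\phi$ is given by the explicit formula
\[
\phi(c) = \sum_{\alpha \models n} \zeta_\alpha(c)\, M_\alpha \qquad \text{for $c \in C$ homogeneous of degree $n$,}
\]
where $\zeta_\alpha$ denotes the scalar obtained by applying the iterated coproduct $\Delta^{(k-1)}$, projecting onto the component of multi-degree $\alpha = (\alpha_1,\dots,\alpha_k)$, and then applying $\zeta$ to each of the $k$ tensor factors. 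Uniqueness is therefore free; only the identification $\phi(\pi)=\tF_\pi$ requires verification.

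First, I would note that $(\QQ\tS_n,\zeta_{\textsf{CD}})$ does satisfy the axioms of a combinatorial coalgebra. The coalgebra structure is provided by Proposition~\ref{coalg-prop}, the counit is given by the evaluation at the identity element $1\in\tS_n$ (the unique element of length $0$), and $\zeta_{\textsf{CD}}$ is a linear functional with $\zeta_{\textsf{CD}}(1)=1$, as needed.

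Next, I would compute $\phi(\pi)$ explicitly. An easy induction on $k$ using the definition of $\Delta$ yields
\[
\Delta^{(k-1)}(\pi) \;=\; \sum_{\pi \dotequals \pi^1\pi^2\cdots \pi^k} \pi^1 \otimes \pi^2 \otimes \cdots \otimes \pi^k,
\]
with the sum over length-additive factorizations of $\pi$ into $k$ parts. Applying $\zeta_{\textsf{CD}}$ to each tensor factor and restricting to multi-degree $\alpha$, the coefficient $\zeta_\alpha(\pi)$ is exactly the number of factorizations $\pi \dotequals \pi^1\cdots\pi^k$ in which each $\pi^i$ is a cyclically decreasing element of length $\alpha_i$.

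Finally, I would match this with Lam's definition. Grouping the terms in Definition~\ref{lam-def} by the composition $\alpha$ formed by reading off the lengths of the nonempty factors $\pi^{i_j}$ at increasing indices $i_1 < i_2 < \cdots$, the definition rewrites as $\tF_\pi = \sum_\alpha c_\alpha M_\alpha$, where $c_\alpha$ is precisely the count computed in the previous paragraph. Thus $\phi(\pi) = \tF_\pi$, and combined with the uniqueness supplied by \cite{ABS}, this proves the proposition. The only obstacle is the bookkeeping needed to match the iterated-coproduct count with the $M_\alpha$-expansion of $\tF_\pi$; there is no substantive technical difficulty, since the universal property hands us both existence and uniqueness at once.
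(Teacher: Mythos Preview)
Your proposal is correct and follows essentially the same approach as the paper: both invoke the universal property \cite[Theorem 4.1]{ABS} for existence and uniqueness, and identify the resulting morphism with $\pi\mapsto\tF_\pi$ by matching Definition~\ref{lam-def} against the explicit monomial formula \cite[Eq.\ (4.2)]{ABS}. You have simply spelled out the comparison in detail where the paper leaves it as a citation.
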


\begin{proof}
There exists a unique morphism $(\QQ \tS_n,\zeta_{\textsf{CD}}) \to (\QSym,\zetaq)$
by \cite[Theorem 4.1]{ABS}. The fact that $\pi \mapsto \tF_\pi$
gives this morphism follows by comparing \cite[Eq.\ (4.2)]{ABS} with 
Definition~\ref{lam-def}.
\end{proof}

\begin{corollary}[{Lam \cite[Theorem 12]{Lam}}]\label{coprod-cor}
If $\pi \in \tS_n$ then 
$ \Delta( \tF_\pi) = \sum_{\pi \dotequals  \pi'\pi''} \tF_{\pi'} \otimes \tF_{\pi''}$.
\end{corollary}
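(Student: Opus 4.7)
The plan is to derive this directly from Proposition~\ref{abs-prop} together with the explicit description of the coproduct on $\QQ\tS_n$ given in Proposition~\ref{coalg-prop}. Let $F : \QQ\tS_n \to \QSym$ denote the linear map sending $\pi \mapsto \tF_\pi$. By Proposition~\ref{abs-prop}, $F$ is a morphism of combinatorial coalgebras; in particular, it is a morphism of graded coalgebras, so the diagram
\[
\begin{CD}
\QQ\tS_n @>F>> \QSym \\
@VV\Delta V @VV\Delta V \\
\QQ\tS_n \otimes \QQ\tS_n @>>F\otimes F> \QSym \otimes \QSym
\end{CD}
\]
commutes.

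Applying the commutativity of this diagram to the basis element $\pi \in \tS_n$ and substituting the formula for $\Delta(\pi)$ from Proposition~\ref{coalg-prop}, I would write
\[
\Delta(\tF_\pi) \;=\; \Delta(F(\pi)) \;=\; (F\otimes F)(\Delta(\pi)) \;=\; (F\otimes F)\!\left(\sum_{\pi \dotequals \pi'\pi''} \pi'\otimes \pi''\right) \;=\; \sum_{\pi \dotequals \pi'\pi''} \tF_{\pi'}\otimes \tF_{\pi''},
\]
which is the desired identity.

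There is essentially no obstacle: the only nontrivial inputs are the coalgebra structure on $\QQ\tS_n$ (already checked in Proposition~\ref{coalg-prop}) and the coalgebra-morphism property of $F$ (already established in Proposition~\ref{abs-prop} via the universal property of $(\QSym,\zetaq)$ from \cite[Thm.~4.1]{ABS}). The corollary is thus a one-line consequence, amounting to unwinding what it means for $F$ to be a coalgebra morphism on the basis of $\QQ\tS_n$.
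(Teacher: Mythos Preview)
Your argument is correct and is exactly the approach taken in the paper: apply the coalgebra morphism $\pi\mapsto\tF_\pi$ to both sides of $\Delta(\pi)=\sum_{\pi\dotequals\pi'\pi''}\pi'\otimes\pi''$. The paper states this in a single sentence, and your version simply unpacks the same reasoning with a commutative diagram.
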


\begin{proof}
Apply the coalgebra morphism $\pi \mapsto \tF_\pi$
to both sides of $\Delta(\pi) = \sum_{\pi \dotequals  \pi'\pi''} \pi' \otimes \pi''$.
\end{proof}

Let $\Sym\subset \QSym$ denote the (commutative and cocommutative) Hopf subalgebra of symmetric functions over $\QQ$.
Let $\Par^n$ be the set of partitions with all parts less than $n$,
and define $\Sym^{(n)} = \QQ\spanning \{ m_\lambda : \lambda \in \Par^n\}.$
All of the quasi-symmetric functions $\tF_\pi$ turn out to be symmetric:

\begin{theorem}[{Lam \cite[Theorem 6]{Lam}}]
If $\pi \in \tS_n$ then $\tF_\pi \in \Sym^{(n)} \subset \Sym$.
\end{theorem}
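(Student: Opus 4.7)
The statement breaks into two claims: $\tF_\pi$ is symmetric in the $x_i$, and its monomial expansion involves only partitions with all parts less than $n$. The second claim follows from a length bound: every cyclically decreasing $\sigma \in \tS_n$ satisfies $\ell(\sigma) \leq n - 1$. Indeed, in any cyclically decreasing reduced expression each simple reflection appears at most once: if $s_a$ occurred at positions $j < k$, reducedness would force some $s_b$ with $b \in \{a - 1, a + 1\}$ strictly between them, but $s_{a+1}$ at a position after $j$ contradicts the cyclically decreasing condition applied at $j$, while $s_{a-1}$ at a position before $k$ contradicts it applied at that position. Furthermore, not all $n$ simple reflections can appear simultaneously, since then the position $p_i$ of the unique occurrence of $s_i$ would satisfy $p_{i+1} < p_i$ for every $i$, a cyclic impossibility. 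Consequently every exponent $\ell(\pi^j)$ in Definition~\ref{lam-def} is strictly less than $n$, so once symmetry is established we have $\tF_\pi \in \Sym^{(n)}$.

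For symmetry it suffices to check invariance under each adjacent transposition $x_i \leftrightarrow x_{i+1}$ of the variables. Holding the factors $\pi^j$ for $j \notin \{i, i+1\}$ fixed and combining the two middle factors into their product $w = \pi^i \pi^{i+1}$, this reduces by coefficient comparison to the following local claim: for every $w \in \tS_n$ and all $a, b \geq 0$, the number of length-additive factorizations $w \dotequals uv$ with $u, v$ cyclically decreasing of lengths $(a, b)$ equals the corresponding count with $(a, b)$ replaced by $(b, a)$. Equivalently, in the $0$-Hecke algebra of $\tS_n$ whose graded dual is the coalgebra of Proposition~\ref{coalg-prop}, the generating series
\[ D(x) \;=\; \sum_{\sigma \text{ cyc.\ decr.}} x^{\ell(\sigma)}\, \sigma \]
satisfies the commutation relation $D(x)\, D(y) = D(y)\, D(x)$. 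I would prove this local claim by constructing an explicit bijection between the two sets of factorizations, assembled from local commutation and short-braid moves along the seam between $u$ and $v$.

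The main obstacle is constructing this bijection in the affine case. In the finite setting a classical Stanley/Edelman--Greene-style argument suffices, but $\tS_n$ introduces genuinely new configurations arising from cyclic wraparound at the seam between $u$ and $v$, and the loss of finiteness removes the usual inductive foothold. To handle the wraparound I would exploit the cyclic automorphism $\pi \mapsto \pi^{\circlearrowright}$ of $\tS_n$ to normalize indices modulo $n$ and reduce the nonstandard cases to local moves already present in a single residue class. Once the bijection is in place, invariance under all adjacent swaps $x_i \leftrightarrow x_{i+1}$, and hence full symmetry of $\tF_\pi$, follows; combined with the length bound this yields $\tF_\pi \in \Sym^{(n)}$.
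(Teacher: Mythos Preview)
The paper does not give its own proof of this statement; it merely cites Lam \cite[Theorem~6]{Lam}. There is therefore nothing in the present paper to compare your proposal against directly, only Lam's original argument.

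Your overall strategy is the standard one and is essentially Lam's. The length bound on cyclically decreasing elements is argued correctly: your two observations (each simple reflection occurs at most once; not all $n$ can occur) are valid, and they yield $\ell(\sigma)\le n-1$, hence the containment in $\Sym^{(n)}$ once symmetry is known. Your reduction of symmetry to the commutation relation $D(x)D(y)=D(y)D(x)$ in the $0$-Hecke algebra of $\tS_n$ is also correct and is precisely the mechanism Lam uses.

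The gap is that you do not actually prove the commutation. You write that you ``would prove this local claim by constructing an explicit bijection'' and then immediately concede that ``the main obstacle is constructing this bijection in the affine case,'' offering only a heuristic about using the rotation automorphism $\pi\mapsto\pi^{\circlearrowright}$ to normalize wraparound. But this commutation \emph{is} the theorem: everything else is bookkeeping. Lam's argument rests on the observation that cyclically decreasing elements of $\tS_n$ are in bijection with proper subsets $S\subsetneq\{s_1,\dots,s_n\}$ (the simple reflections actually used), together with an explicit analysis of how two such subsets interact under the $0$-Hecke product. Your proposal would become a proof once you supply a concrete bijection or reproduce Lam's subset calculus; as written it is a correct outline with the substantive step missing.
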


To describe other features of $\tF_\pi$,
we recall some auxiliary data attached to permutations.

\begin{definition}\label{code-def}
The \emph{code} of an affine permutation $\pi \in \tS_n$ is the sequence $c(\pi) = (c_1,c_2,\dots,c_n)$ where $c_i $ is the number of integers $j\in \ZZ$ with $ i <j$ and $\pi(i)>\pi(j)$.
\end{definition}

Let $\pi \in \tS_n$ and write $c(\pi) = (c_1,c_2,\dots,c_n)$.
If $\pi(i)$ is minimal among $\pi(1),\pi(2),\dots,\pi(n)$, then we must have $c_i = 0$.
An integer $i \in \ZZ$ is a \emph{descent} of $\pi$ if $\pi(i) > \pi(i+1)$, i.e., if $\ell(\pi s_i) = \ell(\pi) -1$.
This holds
if and only if $c_i > c_{i+1}$, taking $c_{n+1} = c_1$.
If $i \in [n]$ is a descent of $\pi$ then 
\be\label{ccc-eq}
c(\pi s_i) = (c_1,\dots,c_{i-1}, c_{i+1}, c_i - 1,c_{i+2},\dots, c_n),
\ee interpreting indices cyclically as necessary.
By induction $|c(\pi)| := c_1 + c_2 + \dots+ c_n = \ell(\pi)$, and the map $\pi \mapsto c(\pi)$ is a bijection
$ \tS_n \to \NN^n - \PP^n$.

\begin{definition}\label{shape-def}
The \emph{shape} $\lambda(\pi)$ of $\pi \in \tS_n$ is the transpose of the partition that sorts $c(\pi^{-1})$.
\end{definition}

The map
$ \lambda : \tS_n \to \Par^n$
is surjective.
Write $<$ for the \emph{dominance order} on partitions.

\begin{theorem}[{Lam \cite[Theorem 13]{Lam}}]\label{uni-thm}
If $\pi \in \tS_n$ then $\tF_\pi \in m_{\lambda(\pi)} + \sum_{\mu < \lambda(\pi)} \NN m_\mu$.
\end{theorem}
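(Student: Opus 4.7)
The plan is to expand $\tF_\pi$ in the monomial basis and identify its leading term combinatorially. By the symmetry theorem of Lam just stated before Definition~\ref{code-def}, we may write $\tF_\pi = \sum_{\mu \in \Par^n} d_\mu m_\mu$ with $d_\mu \in \NN$, and by symmetry $d_\mu$ equals the coefficient of $x^\mu = x_1^{\mu_1} x_2^{\mu_2}\cdots$, which by Definition~\ref{lam-def} is the number of length-additive cyclically decreasing factorizations $\pi = \pi^1 \pi^2 \cdots$ with $\ell(\pi^i) = \mu_i$. The theorem therefore reduces to two claims: (i) $d_{\lambda(\pi)} = 1$, and (ii) if $d_\mu \neq 0$ then $\mu \leq \lambda(\pi)$ in dominance order.

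For (i), I would exhibit a canonical ``greedy'' factorization obtained by iteratively peeling off from the left the longest cyclically decreasing element appearing in a length-additive factorization of $\pi$. Since a cyclically decreasing element is determined by its support (a proper subset of $\{s_1,\dots,s_n\}$ viewed cyclically), this maximal left factor is unique, and its support can be read off from the left descent set of the remaining permutation. By iterating the code transformation rule \eqref{ccc-eq} (applied to $\pi^{-1}$) one checks that the sequence of support sizes produced by this procedure is weakly decreasing and coincides with $\lambda(\pi)$. For uniqueness: any length-additive cyclically decreasing factorization whose sequence of factor lengths equals $\lambda(\pi)$ forces the first factor to have length $\lambda(\pi)_1$, which is large enough that its support is uniquely determined, and then one recurses on the remaining permutation.

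For (ii), given any length-additive cyclically decreasing factorization $\pi = \pi^1 \pi^2 \cdots \pi^k$ with $\ell(\pi^i) = \mu_i$, I would analyze how each successive factor depletes the code $c(\pi^{-1})$. Because each $\pi^i$ has length at most $n-1$ and its support is a proper cyclic subset of the simple generators, repeated application of \eqref{ccc-eq} shows that the partial sums $\mu_1 + \cdots + \mu_j$ are bounded above by the total ``amount'' that the first $j$ columns of the Young diagram of $\lambda(\pi)^T$ can absorb from the sorted code. Comparing with the definition of $\lambda(\pi)$ as the transpose of the sorted code of $\pi^{-1}$ then yields the dominance inequality.

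The main obstacle is controlling step (ii) cleanly: the cyclic condition on decreasing reduced expressions forces supports to be treated as subsets of $\ZZ/n\ZZ$, and the reduction to $\lambda(\pi)$ requires carefully transposing between codes and supports. In the finite case this is handled by a bijection between factorizations and semistandard tableaux via Edelman--Greene, but for affine $\tS_n$ no such bijection produces honest Young tableaux, and one instead works with weak affine tableaux as in \cite{Lam}. A cleaner alternative, at the cost of importing more machinery, is to use Lam's expansion $\tF_\pi = \sum_\lambda c_\lambda^\pi \tilde F_\lambda$ in affine Schur functions together with the known monomial leading terms of the $\tilde F_\lambda$.
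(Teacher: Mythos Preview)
The paper does not prove this theorem at all; it simply quotes it as \cite[Theorem 13]{Lam}. So there is no ``paper's own proof'' to compare against, only Lam's original argument.

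Your outline is in the same spirit as Lam's proof, which also analyzes how the code $c(\pi^{-1})$ evolves under left multiplication by cyclically decreasing elements. But as a standalone argument it has real gaps. In part (i) you assert that the maximal cyclically decreasing left factor of $\pi$ is unique and that iterating the peel-off produces the partition $\lambda(\pi)$; neither claim is obvious. A cyclically decreasing element is determined by its support $S\subsetneq\{s_1,\dots,s_n\}$, but the condition ``$\pi^1$ is a length-additive left factor of $\pi$'' is \emph{not} simply $S\subset\DesL(\pi)$, so the existence of a unique maximal such $S$ requires proof. Identifying the resulting sequence of support sizes with the transpose of the sorted code of $\pi^{-1}$ is the actual content of the theorem and cannot be waved through by citing \eqref{ccc-eq}.

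Part (ii) you yourself flag as the obstacle, and your sketch (``the partial sums $\mu_1+\cdots+\mu_j$ are bounded above by the total amount the first $j$ columns can absorb'') is not an argument: you have not explained what structural feature of a cyclically decreasing factor bounds how much it can decrease the code, nor why this bound aggregates to the dominance inequality. Lam's proof handles this by carefully tracking, for each simple reflection applied, which entry of the code drops; the cyclic condition is exactly what guarantees the entries that drop in a single factor $\pi^i$ are distinct modulo $n$, which is the crux. Your proposed alternative of importing the affine Schur expansion is circular here, since in \cite{Lam} Theorem~13 is proved \emph{before} and used \emph{toward} the affine Schur theory.
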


This result implies that 
$ \QQ\spanning\{ \tF_\pi : \pi \in \tS_n \} = \QQ\spanning\{ \tF_\lambda : \lambda \in \Par^n\} = \Sym^{(n)}$.

\begin{example}
Suppose $n=4$ and $\pi = [-3,4,3,6]  \in \tS_4$ so that $\pi^{-1}=[5,0,3,2]$.
Then $c(\pi) = (0,2,1,2)$ and $c(\pi^{-1}) = (4,0,1,0)$
so $\lambda(\pi) = (2, 1,1,1)$ and $\lambda(\pi^{-1}) = (3,2)$,
and we have $\tF_\pi = m_{21^3} + 4m_{1^5}$
and $\tF_{\pi^{-1}} = m_{32} + 2m_{31^2} + 2m_{2^21} + 3m_{21^3} + 4m_{1^5}$.
\end{example}

Let $\DesR(\pi) = \{ s_i : i\in\ZZ\text{ is a descent of }\pi\} = \{ s \in \{ s_1,s_2,\dots,s_n\} : \ell(\pi s) < \ell(\pi)\}$
and $\DesL(\pi) = \DesR(\pi^{-1})$.
An element $\pi \in \tS_n$ is \emph{Grassmannian} if $\pi^{-1}(1)  < \pi^{-1}(2) < \dots < \pi^{-1}(n)$.
This occurs if and only if $\DesL(\pi) \subset \{ s_n\}$, or equivalently if $c(\pi^{-1})$ is weakly increasing.

\begin{definition}
The \emph{affine Schur function} $\tF_\lambda$ indexed by $\lambda \in \Par^n$
is the Stanley symmetric function
$\tF_\lambda := \tF_{\pi}$ where $\pi \in \tS_n$ is the unique Grassmannian element of shape $ \lambda$.
\end{definition}

Lam has shown that the symmetric functions $\tF_\pi$ are \emph{affine Schur positive} in the following sense:

\begin{theorem}[{Lam \cite[Corollary 8.5]{Lam2008}}] 
\label{+-thm}
$\NN\spanning\{ \tF_\pi : \pi \in \tS_n \} = \NN\spanning\{ \tF_\lambda : \lambda \in \Par^n\}.$
\end{theorem}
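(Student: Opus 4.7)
The plan is to prove the two inclusions separately. The containment $\NN\spanning\{\tF_\lambda : \lambda \in \Par^n\} \subseteq \NN\spanning\{\tF_\pi : \pi \in \tS_n\}$ is immediate, since each affine Schur function $\tF_\lambda$ is by definition one of the $\tF_\pi$, namely for the unique Grassmannian $\pi \in \tS_n$ of shape $\lambda$.

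The substantive direction is the reverse inclusion. I would first observe that $\{\tF_\lambda : \lambda \in \Par^n\}$ is a $\ZZ$-basis of $\Sym^{(n)}$: applying Theorem~\ref{uni-thm} to Grassmannian elements (whose shapes exhaust $\Par^n$, since the shape map $\lambda : \tS_n \to \Par^n$ is already surjective on that subset) yields a unitriangular expansion of each $\tF_\lambda$ in the monomial basis $\{m_\mu : \mu \in \Par^n\}$ with respect to dominance order, and this expansion is invertible over $\ZZ$. Consequently each $\tF_\pi$ has a unique integer expansion $\tF_\pi = \sum_\lambda a_{\pi,\lambda} \tF_\lambda$, and the theorem asserts $a_{\pi,\lambda} \in \NN$.

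To establish positivity I would induct on $\ell(\pi)$ using the Lam--Shimozono affine transition formula to be discussed in Section~\ref{trans-sect}. For non-Grassmannian $\pi$ that formula rewrites $\tF_\pi$ as an $\NN$-linear combination of Stanley symmetric functions $\tF_{\pi'}$ indexed by certain affine Bruhat covers obtained from a chosen descent; applying the inductive hypothesis to each $\tF_{\pi'}$ then delivers the desired affine Schur expansion of $\tF_\pi$.

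The main obstacle is termination: the permutations $\pi'$ produced by one transition step need not have smaller length than $\pi$, so induction on $\ell(\pi)$ alone does not close the argument. A viable strategy is to introduce a finer well-founded statistic on $\tS_n$---for instance, the lexicographic pair combining $\ell(\pi)$ with a secondary invariant measuring the distance from the Grassmannian stratum (say, the position or number of ``non-Grassmannian'' descents of $\pi^{-1}$)---and to verify that each summand of the transition formula strictly decreases it. If this combinatorial approach proves too delicate, one can fall back on Lam's geometric argument, in which the $\tF_\lambda$ are identified with the Schubert basis of the homology of the affine Grassmannian and the $\tF_\pi$ are realized as classes of effective cycles, whereupon the $\NN$-positivity of $a_{\pi,\lambda}$ is automatic.
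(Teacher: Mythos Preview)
The paper does not prove this theorem; it simply quotes it from Lam \cite[Corollary~8.5]{Lam2008}, where the argument is geometric: one identifies the affine Stanley symmetric functions with Schubert classes in the (co)homology of the affine Grassmannian, and positivity of the expansion coefficients then follows from effectivity. So your ``fall back'' is in fact the only proof on record, and it is the one the paper is citing.

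Your primary plan, however, has a genuine gap. First, the Lam--Shimozono transition formula (Theorem~\ref{trans-thm}) does not, as you write, ``rewrite $\tF_\pi$ as an $\NN$-linear combination of Stanley symmetric functions $\tF_{\pi'}$''; it equates two sums $\sum_{\sigma\in\Psi^-_r(\pi)}\tF_\sigma = \sum_{\sigma\in\Psi^+_r(\pi)}\tF_\sigma$ over Bruhat \emph{covers} of $\pi$, so every $\sigma$ appearing has $\ell(\sigma)=\ell(\pi)+1$, not smaller. In the finite case one extracts a recursion for $\tF_\pi$ itself by choosing $r$ to be the \emph{last} descent, which forces one side of the corresponding identity to collapse to a single term; affinely there is no last descent, and no such collapse is available in general. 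Second, the termination issue you raise is not merely a technicality to be dispatched by a clever secondary statistic: the paper itself remarks (just after Theorem~\ref{trans-thm}) that ``Lam and Shimozono originally hoped to use such identities to give a direct, algebraic proof of Theorem~\ref{+-thm}, but an argument along these lines remains to be found \cite[\S3.3]{LamShim}.'' So the combinatorial route you sketch is an open problem, not a proof.
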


Affine Schur functions are not always \emph{Schur positive}; i.e., they do not 
necessarily expand as nonnegative linear combinations of ordinary Schur functions $s_\lambda$.
The Stanley symmetric functions indexed by finite permutations $\pi \in S_n \subset \tS_n$ do have this stronger positivity property, however:

\begin{theorem}[See \cite{EG,Lascoux}]
$\NN\spanning\{ \tF_\pi : \pi \in S_n \} \subset \NN\spanning\{ s_\lambda : \lambda \in \Par^n,\ \lambda \subset (n-1,\dots,2,1)\}.$
\end{theorem}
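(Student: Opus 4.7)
The plan is to prove this classical Schur--positivity statement via Edelman--Greene (EG) insertion.

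First, I would recall that EG insertion assigns to each reduced word $(i_1, i_2, \ldots, i_\ell)$ for $\pi \in S_n$ a pair $(P, Q)$, where $P$ is an EG tableau---a filling of a Young diagram that is strictly increasing along rows and columns and whose column reading word is itself a reduced word for $\pi$---and $Q$ is a standard Young tableau of the same shape. The key structural fact is that this construction gives a bijection between the reduced words of $\pi$ and such pairs $(P,Q)$, and that it carries the descent set of the reduced word to the descent set of the recording tableau $Q$.

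Second, I would combine this bijection with the standard fundamental expansion $\tF_\pi = \sum_{\mathbf{a}} L_{\mathrm{Des}(\mathbf{a})}$, summed over reduced words $\mathbf{a}$ for $\pi$, where $L_\alpha$ denotes Gessel's fundamental quasi-symmetric function. Grouping reduced words by their insertion tableau $P$ and invoking the identity $s_\lambda = \sum_Q L_{\mathrm{Des}(Q)}$, summed over standard Young tableaux $Q$ of shape $\lambda$, yields
\[
\tF_\pi \;=\; \sum_{P} s_{\mathrm{shape}(P)},
\]
with $P$ running over EG tableaux whose column reading word is a reduced word for $\pi$. This immediately exhibits the Schur coefficients as nonnegative integers.

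Third, I would verify the staircase containment. Any reduced word for $\pi \in S_n \subset \tS_n$ uses only letters from $\{1, 2, \ldots, n-1\}$, so every entry of an EG tableau $P$ for $\pi$ is at most $n-1$. The strict increase along rows and columns forces the entry in cell $(i,j)$ to be at least $i + j - 1$. Combining these two bounds gives $i+j-1 \leq n-1$ for every cell $(i,j)$ of $\lambda := \mathrm{shape}(P)$, hence $\lambda_i \leq n-i$ for all $i$, which is exactly the condition $\lambda \subset (n-1, n-2, \ldots, 1)$.

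The main obstacle is establishing the properties of EG insertion itself: that the bumping procedure is well-defined, that the column reading word of the output tableau is a reduced word for the same $\pi$, and that the resulting map descends to a descent-preserving bijection between reduced words and EG pairs. This amounts to showing that the EG bumping rules intertwine the braid and commutation moves on reduced words with corresponding elementary transformations of tableau pairs, which requires a careful case-by-case analysis. Once those structural properties are in hand, the displayed identity for $\tF_\pi$ and the entry bound above finish the argument without further work.
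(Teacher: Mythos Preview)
The paper does not give its own proof of this theorem; it simply cites \cite{EG,Lascoux} and moves on. Your proposal is precisely the Edelman--Greene argument from \cite{EG}, and it is correct as outlined: the descent-compatibility of EG insertion yields the Schur expansion $\tF_\pi = \sum_P s_{\mathrm{shape}(P)}$, and the strict row/column increase together with the alphabet bound $\{1,\dots,n-1\}$ forces each shape into the staircase $(n-1,\dots,2,1)$.

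One small caveat worth flagging for yourself: the paper's $\tF_\pi$ is defined via cyclically \emph{decreasing} factorizations, which for $\pi\in S_n$ differs from Stanley's original increasing convention by an inversion of indices (the paper notes this just after Definition~\ref{lam-def}). This only amounts to replacing $\pi$ by $\pi^{-1}$ or $w_0\pi w_0$ in the EG correspondence, so Schur positivity and the staircase containment are unaffected; but the precise descent set you attach to a reduced word $\mathbf{a}$ (ascents versus descents) must be chosen to match the decreasing convention for the identity $\tF_\pi=\sum_{\mathbf{a}}L_{\mathrm{Des}(\mathbf{a})}$ to hold as stated.
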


One can refine Theorems~\ref{uni-thm} and \ref{+-thm}.
Write $w \mapsto w^*$ for the unique group automorphism of $\tS_n$ with $s_i \mapsto s_i^* := s_{n-i}$ for $i \in \ZZ$, so that $s_n^* = s_n$.
If $\lambda \in \Par^n$ then there exists a unique Grassmannian permutation $\pi$ with $\lambda = \lambda(\pi)$,
and one defines $\lambda^* = \lambda(\pi^*)$.
In turn, let $\lambda'(\pi)= \lambda(\pi^{-1})^*$ for $\pi \in \tS_n$.
Finally define $<^*$ to be the partial order on $\Par^n$ with $\lambda <^* \mu$ if and only if $\mu^* < \lambda^*$.

\begin{theorem}[{Lam \cite[Theorem 21]{Lam}}]\label{schur-thm}
If $\pi \in \tS_n$ then 
\[ \tF_\pi \in \( F_{\lambda'(\pi)} + \sum_{ \lambda'(\pi) <^* \mu} \NN \tF_\mu\) \cap \( \tF_{\lambda(\pi)} + \sum_{\mu < \lambda(\pi)} \NN \tF_\mu\).\]
\end{theorem}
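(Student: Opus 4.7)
The plan is to reduce the statement to Theorems~\ref{uni-thm} and \ref{+-thm}, first establishing the second containment (with respect to dominance $<$) by a unitriangular change-of-basis argument, and then extracting the first containment (with respect to $<^*$) from a parallel argument using $*$-symmetry.

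As a preparation, Theorem~\ref{uni-thm} applied to the unique Grassmannian element of each shape $\mu \in \Par^n$ gives $\tF_\mu \in m_\mu + \sum_{\nu < \mu} \NN m_\nu$. Since dominance is a partial order and each graded piece of $\Sym^{(n)}$ is finite dimensional, this makes $\{\tF_\mu : \mu \in \Par^n\}$ a $\QQ$-basis of $\Sym^{(n)}$, unitriangularly related to $\{m_\lambda : \lambda \in \Par^n\}$.

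For the $<$-containment, Theorem~\ref{+-thm} lets us write $\tF_\pi = \sum_\mu c_\mu \tF_\mu$ with each $c_\mu \in \NN$. Substituting the monomial expansion $\tF_\mu = m_\mu + \sum_{\nu < \mu} c_\nu^\mu m_\nu$, the coefficient of $m_\mu$ in $\tF_\pi$ equals $c_\mu + \sum_{\mu' > \mu} c_{\mu'} c_\mu^{\mu'}$, a sum of nonnegative integers bounded below by $c_\mu$. Theorem~\ref{uni-thm} forces this coefficient to vanish whenever $\mu \not\leq \lambda(\pi)$, so $c_\mu = 0$ in that range; at $\mu = \lambda(\pi)$ the remaining tail involves only $\mu' > \lambda(\pi)$, for which $c_{\mu'} = 0$ by what was just shown. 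Theorem~\ref{uni-thm} then gives $c_{\lambda(\pi)} = 1$, which yields $\tF_\pi \in \tF_{\lambda(\pi)} + \sum_{\mu < \lambda(\pi)} \NN \tF_\mu$.

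For the $<^*$-containment the plan is to rerun the same argument after installing a $*$-analogue of the monomial-triangularity input. Two natural routes present themselves. The first is to prove directly that $\tF_\pi \in m_{\lambda'(\pi)} + \sum_{\lambda'(\pi) <^* \nu} \NN m_\nu$; once established, the unitriangular bookkeeping above carries over verbatim with $<$ replaced by $<^*$. The second is to transport the $<$-containment via the group automorphism $\pi \mapsto \pi^*$ together with the standard involution $\omega$ on $\Sym$: this automorphism exchanges cyclically decreasing with cyclically increasing reduced expressions, sends the Grassmannian of shape $\mu$ to that of shape $\mu^*$, and intertwines $<$ with $<^*$, so an identity $\tF_{\pi^*} = \omega(\tF_\pi)$ together with $\omega(\tF_\mu) = \tF_{\mu^*}$ would convert the $<$-containment applied to $\pi^*$ into the $<^*$-containment applied to $\pi$.

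The unitriangular-plus-positivity argument itself is completely formal, so the main obstacle is supplying the $*$-variant input. In practice this reduces to tracking how cyclically decreasing factorizations behave under $\pi \mapsto \pi^*$ in Definition~\ref{lam-def} and identifying the resulting transform of $\tF_\pi$ on $\Sym^{(n)}$; this is where I expect the real effort to concentrate.
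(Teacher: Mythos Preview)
The paper does not prove this theorem; it is quoted from Lam's work \cite[Theorem~21]{Lam} without argument, so there is no ``paper's proof'' to match against. That said, your proposal can be assessed on its own.

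Your derivation of the $<$-containment is correct and complete: the unitriangularity of $\{\tF_\mu\}$ against $\{m_\mu\}$ from Theorem~\ref{uni-thm}, combined with the nonnegativity from Theorem~\ref{+-thm}, forces the affine Schur expansion of $\tF_\pi$ to be supported on $\{\mu \leq \lambda(\pi)\}$ with leading coefficient $1$.

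The $<^*$-containment is where your proposal is incomplete, and there is a concrete issue with the route you sketch. The ``standard involution $\omega$ on $\Sym$'' is the wrong map: $\Sym^{(n)}$ is not $\omega$-stable in general, and $\omega$ does \emph{not} send $\tF_\mu$ to $\tF_{\mu^*}$. The correct operator is the map $\omega^+:\Sym^{(n)}\to\Sym^{(n)}$ defined (in the paragraph following the theorem) precisely by $\omega^+(\tF_\mu)=\tF_{\mu^*}$, together with Lam's identity $\omega^+(\tF_\pi)=\tF_{\pi^{-1}}$. With these in hand the argument closes immediately: apply your $<$-containment to $\pi^{-1}$ to get $\tF_{\pi^{-1}}\in \tF_{\lambda(\pi^{-1})}+\sum_{\mu<\lambda(\pi^{-1})}\NN\,\tF_\mu$, then hit both sides with $\omega^+$. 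Since $\lambda(\pi^{-1})^*=\lambda'(\pi)$ and $\mu<\lambda(\pi^{-1})$ is equivalent to $\lambda'(\pi)<^*\mu^*$, this yields exactly $\tF_\pi\in \tF_{\lambda'(\pi)}+\sum_{\lambda'(\pi)<^*\nu}\NN\,\tF_\nu$. Note that the relevant automorphism on the permutation side is $\pi\mapsto\pi^{-1}$ (matching the definition of $\lambda'$), not $\pi\mapsto\pi^*$ as you wrote; and the claim about ``cyclically increasing'' factorizations, while true, is a detour you do not need once $\omega^+$ is available.
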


The affine Schur functions form a basis for $\Sym^{(n)}$, so there exists a unique linear involution $\omega^+ : \Sym^{(n)} \to \Sym^{(n)}$ with
$\omega^+(\tF_\lambda) = \tF_{\lambda^*}$ for all $\lambda \in \Par^n$.
This map can be defined directly in terms of the usual elementary, homogeneous, and monomial symmetric functions; see \cite[\S9]{Lam}.

\begin{theorem}[{Lam \cite[Theorem 15 and Proposition 17]{Lam}}]
\label{oom-thm}
 If $\pi \in \tS_n$ then $\omega^+(\tF_\pi) = \tF_{\pi^*} = \tF_{\pi^{-1}}$.
\end{theorem}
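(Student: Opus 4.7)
The plan is to establish the two asserted identities separately. For the first, $\tF_{\pi^*} = \tF_{\pi^{-1}}$, I would consider the composite $\alpha : \tS_n \to \tS_n$ defined by $\alpha(\pi) := (\pi^*)^{-1}$, which is an involutive antiautomorphism preserving length. One verifies that if $s_{i_1} s_{i_2} \cdots s_{i_l}$ is a cyclically decreasing reduced expression for $\pi$, then $s_{n - i_l} s_{n - i_{l-1}} \cdots s_{n - i_1}$ is a cyclically decreasing reduced expression for $\alpha(\pi)$: both cyclic-decreasingness conditions reduce to the requirement $i_k \not\equiv i_j + 1 \pmod n$ for $j < k$. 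Applying $\alpha$ to a length-additive factorization $\pi^* = u_1 u_2 \cdots u_k$ into cyclically decreasing pieces thus produces a factorization $\pi^{-1} = \alpha(u_k) \alpha(u_{k-1}) \cdots \alpha(u_1)$ of the same type, with the sequence of piece-lengths reversed. Since $\tF_\pi$ is symmetric (by the theorem preceding Definition~\ref{code-def}), the generating sum is invariant under this reversal, yielding $\tF_{\pi^*} = \tF_{\pi^{-1}}$.

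For the second identity $\omega^+(\tF_\pi) = \tF_{\pi^*}$, I would first handle the Grassmannian case. Because $s \in \DesL(\pi^*)$ iff $s^* \in \DesL(\pi)$ and $s_n^* = s_n$, the involution $*$ preserves the class of Grassmannian elements, and $\lambda(\pi^*) = \lambda(\pi)^*$ by the definition of $*$ on $\Par^n$. Hence $\omega^+(\tF_\pi) = \omega^+(\tF_{\lambda(\pi)}) = \tF_{\lambda(\pi)^*} = \tF_{\lambda(\pi^*)} = \tF_{\pi^*}$ in this case. For arbitrary $\pi$, I would leverage the coalgebra structure from Proposition~\ref{coalg-prop}: the map $\pi \mapsto \tF_{\pi^*}$ is a coalgebra morphism $\QQ \tS_n \to \Sym^{(n)}$, since length-additive factorizations of $\pi^*$ correspond bijectively to those of $\pi$ under $(u,v) \mapsto (u^*,v^*)$; and the map $\pi \mapsto \omega^+(\tF_\pi)$ is likewise a coalgebra morphism, provided $\omega^+$ restricts to a coalgebra automorphism of $\Sym^{(n)}$ (which follows from Lam's explicit description in Section~9 of \cite{Lam}). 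Induction on $\ell(\pi)$, using the inductive hypothesis on proper factors, then shows that $\omega^+(\tF_\pi) - \tF_{\pi^*}$ is primitive in $\Sym^{(n)}$; when $\ell(\pi) \geq n$ this forces the difference to vanish, since the degree-$N$ primitives of $\Sym$ are spanned by $m_{(N)} \notin \Sym^{(n)}$.

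The main obstacle is ruling out a nonzero primitive difference at short lengths $\ell(\pi) < n$. This can be handled by comparing the coefficients of $m_{(N)}$ on both sides directly via the monomial expansion of Theorem~\ref{uni-thm}. An alternative strategy avoiding this issue altogether is to combine the first identity with the two triangular expansions of Theorem~\ref{schur-thm}: applying $\omega^+$ to the second expansion of $\tF_\pi$ produces a triangular expansion of $\omega^+(\tF_\pi)$ in dominance order whose leading term $\tF_{\lambda'(\pi)^*} = \tF_{\lambda(\pi^{-1})}$ matches that of the first expansion for $\tF_{\pi^{-1}} = \tF_{\pi^*}$, reducing the proof to an inductive comparison of lower-order coefficients.
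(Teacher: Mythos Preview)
The paper does not prove this theorem; it simply cites Lam \cite{Lam}. So there is no ``paper's own proof'' to compare against, and your proposal must be evaluated on its own merits.

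Your argument for $\tF_{\pi^*} = \tF_{\pi^{-1}}$ is correct and clean: the antiautomorphism $\alpha$ preserves cyclic-decreasingness and reverses factorizations, and symmetry of $\tF$ absorbs the reversal.

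Your argument for $\omega^+(\tF_\pi) = \tF_{\pi^*}$ has two genuine gaps. First, the claim that $\omega^+$ is a coalgebra automorphism of $\Sym^{(n)}$ is not available from the definition given in the present paper, which characterizes $\omega^+$ only by $\omega^+(\tF_\lambda) = \tF_{\lambda^*}$; you appeal to \cite[\S9]{Lam} for an explicit description, but you do not say what that description is or why it implies compatibility with the coproduct. Since the theorem you are proving is itself \cite[Theorem~15]{Lam}, you need to be careful that the facts you import from \cite{Lam} are logically prior to it. Second, even granting the coalgebra property, your primitivity argument leaves the range $1 \le \ell(\pi) < n$ unhandled, since $p_N = m_{(N)} \in \Sym^{(n)}$ for $N < n$. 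Your proposed workaround of ``comparing coefficients of $m_{(N)}$ via Theorem~\ref{uni-thm}'' does not close this: Theorem~\ref{uni-thm} tells you $[m_{(N)}]\tF_{\pi^*}$, but says nothing about $[m_{(N)}]\omega^+(\tF_\pi)$ without knowing how $\omega^+$ acts on the monomial basis---which again requires the explicit description you have not supplied. The alternative via Theorem~\ref{schur-thm} matches only leading terms and does not give an induction on the lower-order coefficients without further argument.

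In short, the first half stands; the second half needs either a concrete description of $\omega^+$ (with a verification that it respects the coproduct and a direct check in low degree) or a different strategy altogether.
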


\section{Affine involutions}\label{invol-sect}

For integers $i<j \not \equiv i \modu n)$, let $t_{ij} \in \tS_n$ be the affine permutation
interchanging $i$ and $j$ while fixing all integers $k \notin \{i,j\} + n \ZZ$,
so that $t_{i,i+1} = s_i$.
Such permutations are precisely 
the \emph{reflections} in $\tS_n$, i.e., the elements conjugate to $s_i$ for some $i \in \ZZ$.

Let $\tI_n = \{ z \in \tS_n : z =z^{-1} \}$ be the set of involutions in $\tS_n$.
Each $z \in \tI_n$ is a product of commuting reflections, so
uniquely corresponds to the following data: a disjoint (possibly empty) collection of pairs $\{i < j\} \in \binom{[n]}{2}$
and for each pair an integer $m \in \ZZ$, such that 
$z$ is the product of the commuting reflections $t_{i,j+mn}$.
A useful graphical method of representing this data is through the \emph{winding diagram} of an involution:
\[ 
\begin{tikzpicture}[baseline=0,scale=0.18,label/.style={postaction={ decorate,transform shape,decoration={ markings, mark=at position .5 with \node #1;}}}]
{
\draw[fill,lightgray] (0,0) circle (4.0);
\node at (2.44929359829e-16, 4.0) {$_\bullet$};
\node at (1.71450551881e-16, 2.8) {$_1$};
\node at (2.82842712475, 2.82842712475) {$_\bullet$};
\node at (1.97989898732, 1.97989898732) {$_2$};
\node at (4.0, 0.0) {$_\bullet$};
\node at (2.8, 0.0) {$_3$};
\node at (2.82842712475, -2.82842712475) {$_\bullet$};
\node at (1.97989898732, -1.97989898732) {$_4$};
\node at (2.44929359829e-16, -4.0) {$_\bullet$};
\node at (1.71450551881e-16, -2.8) {$_5$};
\node at (-2.82842712475, -2.82842712475) {$_\bullet$};
\node at (-1.97989898732, -1.97989898732) {$_6$};
\node at (-4.0, -4.89858719659e-16) {$_\bullet$};
\node at (-2.8, -3.42901103761e-16) {$_7$};
\node at (-2.82842712475, 2.82842712475) {$_\bullet$};
\node at (-1.97989898732, 1.97989898732) {$_8$};
\draw [-,>=latex,domain=0:100,samples=100,densely dotted] plot ({(4.0 + 4.0 * sin(180 * (0.5 + asin(-0.9 + 1.8 * (\x / 100)) / asin(0.9) / 2))) * cos(90 - (0.0 + \x * 4.95))}, {(4.0 + 4.0 * sin(180 * (0.5 + asin(-0.9 + 1.8 * (\x / 100)) / asin(0.9) / 2))) * sin(90 - (0.0 + \x * 4.95))});
\draw [-,>=latex,domain=0:100,samples=100] plot ({(4.0 + 2.0 * sin(180 * (0.5 + asin(-0.9 + 1.8 * (\x / 100)) / asin(0.9) / 2))) * cos(90 - (90.0 + \x * 1.35))}, {(4.0 + 2.0 * sin(180 * (0.5 + asin(-0.9 + 1.8 * (\x / 100)) / asin(0.9) / 2))) * sin(90 - (90.0 + \x * 1.35))});
\draw [-,>=latex,domain=0:100,samples=100] plot ({(4.0 + 2.0 * sin(180 * (0.5 + asin(-0.9 + 1.8 * (\x / 100)) / asin(0.9) / 2))) * cos(90 - (270.0 + \x * 1.35))}, {(4.0 + 2.0 * sin(180 * (0.5 + asin(-0.9 + 1.8 * (\x / 100)) / asin(0.9) / 2))) * sin(90 - (270.0 + \x * 1.35))});
}
\end{tikzpicture}
\]
Here, the numbers $1,2,\dots,n$ are arranged in order around a circle,
and a curve traveling $m$ times clockwise around the vertex 1 connects each of the chosen pairs $\{i,j\}$.
(We draw these curves in different styles for readability.)
The example represents the involution $z = t_{1,12} \cdot t_{3,6}\cdot t_{7,10}  \in \tI_8$.

There exists a
unique associative product $\circ: \tS_n \times \tS_n \to \tS_n$
with
$s_i \circ s_i =s_i$ for all $i \in \ZZ$ and 
with
$\pi' \circ \pi'' = \pi$ whenever $\pi \dotequals  \pi'\pi''$
\cite[\S7.1]{Humphreys}.
Fix $i \in \ZZ$ and $z \in \tI_n$. One can check that
\be s_i \circ z \circ s_i = \begin{cases} 
z &\text{if }z(i) > z(i+1) \\
zs_i =s_iz&\text{if }i = z(i) < z(i+1)=i+1 \\
s_izs_i &\text{otherwise}.
\end{cases}
\ee
It follows by induction that
$
\tI_n  = \{ \pi ^{-1} \circ \pi : \pi \in \tS_n\}
$,
so the set 
$\HA(z) := \{ \pi \in \tS_n : \pi^{-1} \circ \pi = z\}$
is nonempty.
Since $\ell(\pi) \leq \ell(\pi^{-1}\circ \pi)$ for all $\pi \in \tS_n$,
the set $\HA(z)$ is also necessarily finite.
Let $\cA(z)$ be the subset of minimal-length permutations in $\HA(z)$.
Following \cite{HMP2},
we refer to elements of $\cA(z)$ as \emph{atoms} for $z$ and to elements of $\HA(z)$
as \emph{Hecke atoms}.

\begin{definition}\label{aff-def}
The \emph{(affine) involution Stanley symmetric function} of $z \in \tI_n$ is
$ \iF_z = \sum_{\pi \in \cA(z)} \tF_\pi$.
\end{definition}

This is an affine generalization of the symmetric functions studied in \cite{HMP1,HMP2,HMP3,HMP4,HMP5},
which are defined by the same formula but with  $z$ restricted to the set 
$I_n := \tI_n \cap S_n$.
There are some noteworthy parallels between 
$\iF_z$ for $z \in I_n \subset \tI_n$ and 
$\tF_\pi$ for $\pi \in S_n \subset \tS_n$.
For example, the power series $\iF_z$ for $z \in I_n$ are the stable limits of 
certain \emph{involution Schubert polynomials} $\iS_z$ (see \cite{HMP1,WY}),
which represent the cohomology classes of the orbit closures of the orthogonal group
$\O_n(\CC)$ on the complete flag variety.
Whereas each $\tF_\pi$ for $\pi \in S_n$ is Schur positive, each $\iF_z$ for $z \in I_n$
is \emph{Schur $P$-positive}, i.e., a nonnegative integral linear combination of the \emph{Schur $P$-functions} $P_\mu \in \Sym$ (see \cite[\S A.3]{Stem}).
An overarching goal of this article is to understand the extent to which such parallels carry over to the affine case.

To understand the properties of $\iF_z$, we should describe the sets $\HA(z)$ and $\cA(z)$ more concretely.
Suppose $a_1,a_2,\dots,a_N$ are integers representing all congruence classes modulo $n$ at least once.
Define $[[ a_1,a_2,\dots,a_N]] \in \tS_n$ to be the affine permutation with a window 
given by reading the sequence $[a_1,a_2,\dots,a_N]$ left to right and omitting $a_j$ whenever $a_i \equiv a_j \modu n)$ for some $i<j$.
For example, if $n=5$ then $[[1,3,0,1,2,-1,4,8]] = [1,3,0,2,-1] = [3,0,2,-1,6] = [0,2,-1,6,8] \in \tS_5$.
 Let $z \in \tI_n$. 
Write $a_1<a_2 <\dots <a_l$ 
for the numbers $a \in  [n]$ with $a \leq z(a)$
and define
\be\label{amin-eq}
\alpha_{\min}(z) = [[ z(a_1), a_1,z(a_2), a_2,\dots,z(a_l),a_l]]^{-1} \in\tS_n.
\ee
Next write $b_1<b_2< \dots <b_l$
for the numbers $b \in [n]$ with $z(b) \leq b$ and define
\be\label{amax-eq}
\alpha_{\max}(z) = [[ b_1,z(b_1),b_2,z(b_2),\dots,b_l,z(b_l)]]^{-1} \in \tS_n.
\ee
The value of $l$ is the same  in both constructions.
Both $\alpha_{\min}(z)$ and $\alpha_{\max}(z)$ are in $\HA(z)$ by \cite[Proposition 6.8]{M}.

Below, we characterize the sets $\HA(z)$ and $\cA(z)$
as the equivalence classes of $\alpha_{\min}(z)$ or $\alpha_{\max}(z)$
under certain relations on $\tS_n$. These relations are generated by simple moves that rearrange three consecutive terms in a given window for the inverse of an affine permutation.

First let $\approx_\cA$ be the transitive closure of the symmetric relation on $\tS_n$
that has $u^{-1}\approx_\cA v^{-1} \approx_\cA w^{-1}$ whenever $u,v,w \in \tS_n$ have windows that are identical except 
in three consecutive positions where 
$u =[\cdots\ c,b,a\ \cdots],$ $v= [\cdots\ c,a,b\ \cdots], $ and $ w=[ \cdots\ b,c,a\ \cdots ]$
for some integers $a<b<c$. 
It is straightforward to show that this occurs if and only if 
there is a reduced expression $u^{-1} = s_{i} \cdot s_{i+1}\cdot s_i \cdot s_{j_1} \cdot s_{j_2} \cdots s_{j_l}$
such that $v^{-1} =  s_{i+1} \cdot s_i \cdot s_{j_1}\cdot s_{j_2} \cdots s_{j_l}$
and $w^{-1} = s_i \cdot s_{i+1}\cdot  s_{j_1}\cdot s_{j_2} \cdots s_{j_l}$ are also reduced expressions; compare with \cite[Lemma 6.3]{HMP2}.

\begin{theorem}[{\cite[Proposition 8.5]{M2021}}]
If $z \in \tI_n$ then \[\HA(z) =\{ \pi \in \tS_n :  \pi \approx_\cA \alpha_{\min}(z)\} =\{ \pi \in \tS_n :  \pi \approx_\cA \alpha_{\max}(z)\}.\]
\end{theorem}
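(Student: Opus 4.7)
The plan is to deduce both claimed equalities from three intermediate facts: (i) $\alpha_{\min}(z)$ and $\alpha_{\max}(z)$ both lie in $\HA(z)$; (ii) the relation $\approx_\cA$ preserves the Demazure product $\pi\mapsto \pi^{-1}\circ\pi$, so every $\approx_\cA$-class is contained in a single $\HA(z)$; and (iii) every element of $\HA(z)$ is $\approx_\cA$-equivalent to $\alpha_{\min}(z)$ and to $\alpha_{\max}(z)$. Given (i), (ii), and (iii), the two set equalities are immediate.

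For (i), I would read the window in \eqref{amin-eq} as encoding a length-additive factorization of $\alpha_{\min}(z)^{-1}$ in which each consecutive pair $(z(a_i),a_i)$ contributes the commuting reflection $t_{a_i,z(a_i)}$. Applying the identity for $s_i\circ z\circ s_i$ stated earlier in the section, the Demazure product of this factorization collapses to $\prod_i t_{a_i,z(a_i)} = z$. The calculation for $\alpha_{\max}(z)$ is symmetric. For (ii), the two defining moves of $\approx_\cA$ are local three-letter transformations of windows: if $u$ and $v$ differ as in the first move then $u = v s_i$ for the position $i$ holding the middle entry, so $u^{-1} = s_i v^{-1}$; the hypothesis $a<b<c$ is exactly what is needed to guarantee that left multiplication by $s_i$ preserves $\pi^{-1}\circ\pi$ via a case check against the formula for $s\circ z\circ s$. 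The second move is handled identically with $s_{i-1}$ in place of $s_i$. Combining (i) and (ii) shows that the two $\approx_\cA$-classes under consideration are both subsets of $\HA(z)$.

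The main obstacle is (iii): showing that \emph{every} Hecke atom is equivalent to $\alpha_{\min}(z)$. The analogue of this reduction in the finite case is essentially \cite{HMP2}, proved by an induction that leans on the boundedness of $S_n$; in the affine setting the cyclic structure of windows and the absence of a maximal element preclude a direct transcription. My plan is to proceed by induction on a suitable window-distance from $\alpha_{\min}(z)^{-1}$, such as lexicographic distance on windows chosen after applying a cyclic shift $\pi \mapsto \pi^\circlearrowright$ that places all the ``important'' entries in a single contiguous segment. Given $\pi\in\HA(z)$ not yet equal to $\alpha_{\min}(z)$, I would locate the earliest position where the window of $\pi^{-1}$ disagrees with that of $\alpha_{\min}(z)^{-1}$, exhibit a three-letter pattern of the required form in that neighborhood, and apply an $\approx_\cA$-move strictly decreasing the distance. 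The delicate new step, not needed in the finite case, is to verify that such a reducing pattern is always available and that it never requires interaction with the wrap-around boundary of the window; once this is established the induction terminates at $\alpha_{\min}(z)$. The $\alpha_{\max}$ case follows either by the same argument with roles reversed, or by first proving $\alpha_{\min}(z) \approx_\cA \alpha_{\max}(z)$ using the $\alpha_{\min}$ half together with (i) and (ii).
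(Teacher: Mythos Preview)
The paper does not prove this theorem from scratch: its entire argument is the sentence ``This follows by combining \cite[Proposition 3.3]{M0} and \cite[Proposition 6.8]{M}.'' Your three-step decomposition (i)--(iii) is exactly the natural structure, and presumably mirrors how those cited results fit together, so at the level of outline you are on the same track.

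That said, what you have written is a plan rather than a proof, and the gaps are not merely cosmetic. Your justification for (ii) does not go through as stated. The generating move $u^{-1}\approx_\cA v^{-1}$ with $u=[\cdots c,b,a\cdots]$ and $v=[\cdots c,a,b\cdots]$ gives $\pi_u = s_i\pi_v$ length-additively, so $\pi_u^{-1}\circ\pi_u = \pi_v^{-1}\circ s_i\circ s_i\circ\pi_v = \pi_v^{-1}\circ s_i\circ\pi_v$; but neither $\pi_v^{-1}\circ s_i=\pi_v^{-1}$ nor $s_i\circ\pi_v=\pi_v$ holds here (indeed $v(i)=a<b=v(i+1)$), so the $s_i$ is not absorbed by a single application of the rule for $s\circ z\circ s$. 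The equality $\pi_u^{-1}\circ\pi_u=\pi_v^{-1}\circ\pi_v$ is true, but proving it requires a genuine argument---this is essentially the content of \cite[Proposition 3.3]{M0}, not a one-line case check.

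For (iii) you explicitly acknowledge that the ``delicate new step'' (finding a reducing $\approx_\cA$-move that avoids the cyclic boundary of the window) is unverified. This step is the entire difficulty: it is precisely what \cite[Proposition 6.8]{M} establishes, and the finite-case induction from \cite{HMP2} does not carry over without substantial modification. Until you actually produce the reducing move and prove it always exists, you have restated the problem rather than solved it. In short, your outline is sound and compatible with the paper's cited sources, but parts (ii) and (iii) both require real work that you have deferred; the paper defers the same work to its references.
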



Next let $\prec_\cA$ be the transitive closure of the relation on $\tS_n$ 
that has $v^{-1} \prec_\cA w^{-1}$ whenever $v$ and $w$ have windows that are identical except 
in three consecutive positions where 
 $v= [\cdots\ c,a,b\ \cdots] $ and $ w=[ \cdots\ b,c,a\ \cdots ]$
for some integers $a<b<c$. 

\begin{theorem}[{\cite[Theorem 6.14]{M}}]\label{atoms-thm}
Let $z \in \tI_n$. Restricted to $\cA(z)$, the relation $\prec_\cA$ is a graded partial order
and it holds that
$\cA(z) = \{ \pi \in \tS_n : \alpha_{\min}(z) \preceq_\cA \pi\} =  \{ \pi \in \tS_n : \pi \preceq_\cA \alpha_{\max}(z)\}.$
\end{theorem}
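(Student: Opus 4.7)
The plan is to mirror the strategy of \cite[Theorem 6.10]{HMP2} for the finite case, using the preceding characterization $\HA(z) = \{\pi \in \tS_n : \pi \approx_\cA \alpha_{\min}(z)\}$ as the replacement for the inductive arguments in \cite{HMP2} that relied on the finiteness of $S_n$. I would begin by verifying directly from the definitions \eqref{amin-eq} and \eqref{amax-eq} that $\alpha_{\min}(z)$ and $\alpha_{\max}(z)$ both lie in $\cA(z)$: that each inverse satisfies $\pi^{-1} \circ \pi = z$ is a straightforward unpacking of the Hecke product, while minimality of $\ell$ follows by checking against the obvious lower bound coming from the $\sim$-equivalence classes making up $\Inv$ of any $\pi \in \HA(z)$.

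Next I would establish that the generating move of $\prec_\cA$ — replacing three consecutive entries $c,a,b$ in the window of $\pi^{-1}$ by $b,c,a$ with $a<b<c$ — preserves both $\HA(z)$ and $\ell$. Preservation of $\HA(z)$ is automatic: the generating relation of $\prec_\cA$ is contained in $\approx_\cA$, and $\HA(z)$ is a single $\approx_\cA$-class by the preceding theorem. Preservation of $\ell$ is a direct count of $\sim$-classes contributing inversions in the three affected positions. Consequently, $\prec_\cA$ restricts to a relation on $\cA(z)$, and its transitive closure $\preceq_\cA$ stays inside $\cA(z)$.

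To make $\preceq_\cA$ restricted to $\cA(z)$ into a graded partial order, I would introduce an $\NN$-valued statistic $\nu$ that strictly increases by $1$ under each covering move of $\prec_\cA$ and vanishes uniquely on $\alpha_{\min}(z)$, modeled on the analogous statistic in \cite[Theorem 6.10]{HMP2}. This immediately yields antisymmetry, the grading, and uniqueness of the minimum; the dual statement for $\alpha_{\max}(z)$ follows symmetrically. Connectivity — that every $\pi \in \cA(z) \setminus \{\alpha_{\min}(z)\}$ has an immediate $\prec_\cA$-predecessor inside $\cA(z)$ — I would handle by locating the leftmost triple of window entries of $\pi^{-1}$ forming a $b,c,a$ pattern and checking that the reverse move lands back in $\cA(z)$.

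The main obstacle, and the feature genuinely distinguishing the affine case from the finite one, will be elementary moves in which the three consecutive window positions straddle a window boundary, so that the periodicity $\pi(i+n) = \pi(i)+n$ forces one of the entries to be shifted by a multiple of $n$. Verifying that such boundary-crossing moves preserve atomicity and interact correctly with $\nu$ reduces to a bounded-but-delicate analysis of window patterns modulo the $n$-shift; the authors' reference to computer calculations in the introduction to \cite{M} strongly suggests that this is precisely the step where machine verification enters the argument.
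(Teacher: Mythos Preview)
The paper does not give its own proof of this statement: Theorem~\ref{atoms-thm} is quoted verbatim from \cite[Theorem~6.14]{M} and no argument is supplied here. So there is no in-paper proof to compare your proposal against; you are effectively sketching what you imagine the proof in \cite{M} looks like.

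As a sketch your outline is broadly reasonable, but one remark is off target. You suggest that the boundary-crossing moves are where ``machine verification enters the argument,'' pointing to the authors' mention of computer calculations. That reference in the introduction is to Theorems~\ref{tau-thm1} and~\ref{tau-thm3} (the covering and toggling properties proved in Section~\ref{tau-sect1}), not to the atom poset result you are discussing. There is no indication, either in this paper or in the cited statement from \cite{M}, that Theorem~\ref{atoms-thm} requires computer assistance; the affine wrap-around is a genuine subtlety, but you should not assume it is handled computationally without checking \cite{M} directly.
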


\begin{example}\label{aff-ex1}
Suppose $n=4$ and
\[z= 
\begin{tikzpicture}[baseline=0,scale=0.15,label/.style={postaction={ decorate,transform shape,decoration={ markings, mark=at position .5 with \node #1;}}}]
{
\draw[fill,lightgray] (0,0) circle (4.0);
\node at (2.4492935982947064e-16, 4.0) {$_\bullet$};
\node at (1.8369701987210297e-16, 3.0) {$_{1}$};
\node at (4.0, 0.0) {$_\bullet$};
\node at (3.0, 0.0) {$_{2}$};
\node at (2.4492935982947064e-16, -4.0) {$_\bullet$};
\node at (1.8369701987210297e-16, -3.0) {$_{3}$};
\node at (-4.0, -4.898587196589413e-16) {$_\bullet$};
\node at (-3.0, -3.6739403974420594e-16) {$_{4}$};
\draw [-,>=latex,domain=0:100,samples=100] plot ({(4.0 + 2.0 * sin(180 * (0.5 + asin(-0.9 + 1.8 * (\x / 100)) / asin(0.9) / 2))) * cos(90 - (180.0 + \x * 4.5))}, {(4.0 + 2.0 * sin(180 * (0.5 + asin(-0.9 + 1.8 * (\x / 100)) / asin(0.9) / 2))) * sin(90 - (180.0 + \x * 4.5))});
}
\end{tikzpicture}
= t_{3,8} = [1,2,8,-1] \in \tI_4.
\]
The elements of $\cA(z)$ are
\[\ba \alpha_{\min}(z) &= [1,2,8,3]^{-1} =[2,3,5,0] = [8,3,5,6]^{-1} \\&
\prec_\cA [5,8,3,6]^{-1} =[0,3,6,1]  \\&
\prec_\cA [5,6,8,3]^{-1} =[0,1,7,2]=[1,2,4,-1]^{-1} =\alpha_{\max}(z).
\ea\]
The elements of $\HA(z) -\cA(z) $ are 
$ [8,5,3,6]^{-1}$ and
$ [5,8,6,3]^{-1}$. In terms of reduced words, 
\[
\HA(z) = \{ s_2s_1s_0s_3,\ \ s_1s_0s_2s_3,\ \ s_0s_1s_2s_3,\ \ s_1s_0s_1s_2s_3,\ \
s_1s_2s_1s_0s_3\}. 
\]
Both $[2,3,5,0] = s_0s_1s_2s_3$ and $[0,1,7,2] = s_2s_1s_0s_3$ have a single reduced expression,
and it holds that
$\tF_{[2,3,5,0]} = m_{1^4}$ and $ \tF_{[0,1,7,2]} = m_{1^4} + m_{21^2} + m_{2^2} + m_{31}.$
We saw in Example~\ref{stan-ex}
that 
$\tF_{[0,3,6,1]} = 2m_{1^4} + m_{21^2}.$
Therefore $\iF_z = \iF_{[1,2,8,-1]} = 4m_{1^4} + 2m_{21^2} + m_{2^2} + m_{31}.$
\end{example}

For $y \in \tI_n$, let
$\Cyc(y) = \{ (a,b) \in \ZZ\times\ZZ : a \leq b = y(a)\}$.
Theorem~\ref{atoms-thm} suggests an efficient algorithm for generating the set of atoms for any involution in $\tS_n$.
One can also use the theorem to derive the following ``local'' characterization of $\cA(y)$,
which generalizes \cite[Theorem 2.5]{CJW}:

\begin{theorem}[{\cite[Theorem 7.6]{M}}]\label{local-thm}
Let $y \in \tI_n$ and $\pi \in \tS_n$.
Then $\pi \in \cA(y)$
if and only if for all $(a,b),(a',b') \in \Cyc(y)$, the following properties hold:
\begin{enumerate}
\item[(1)] If $a < b$ then $\pi(b) < \pi(a)$.
\item[(2)] If $a < a' \leq b' < b$ then we do not have $\pi(b) < \pi(a') < \pi(a)$ or $\pi(b) < \pi(b') < \pi(a)$.
\item[(3)] If $a < a'$ and $b < b'$ then $\pi(a) <\pi(b').$
\end{enumerate}
\end{theorem}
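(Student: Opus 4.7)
The plan is to reduce the equivalence to Theorem~\ref{atoms-thm}, which identifies $\cA(y)$ as the $\preceq_\cA$-interval bounded below by $\alpha_{\min}(y)$ and above by $\alpha_{\max}(y)$. It then suffices to establish two facts: (a) the set of $\pi \in \tS_n$ satisfying conditions (1)--(3) contains $\alpha_{\min}(y)$ and is stable under each elementary $\prec_\cA$-covering move going upward; and (b) conversely, whenever $\pi$ satisfies (1)--(3) and $\pi \neq \alpha_{\min}(y)$, some reverse elementary move produces a strictly smaller $\pi' \prec_\cA \pi$ still satisfying (1)--(3). Together (a) and (b) pin down the atoms as exactly the solution set of (1)--(3).

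For (a), I would first verify (1)--(3) for $\alpha_{\min}(y)$ directly from the window formula \eqref{amin-eq}: each pair $(z(a_i), a_i) \in \Cyc(y)$ is placed at image positions $(2i-1, 2i)$ of $\alpha_{\min}(y)$, listed in increasing order of the smaller endpoint, which immediately forces (1) and allows (2) and (3) to be read off from the monotonic block structure. I would then check that the elementary move sending $v^{-1}$ with window $[\cdots c, a, b, \cdots]$ to $w^{-1}$ with window $[\cdots b, c, a, \cdots]$ (where $a < b < c$) preserves each of (1)--(3) with respect to every pair in $\Cyc(y)$. This reduces to a finite case analysis on how the three affected domain positions relate to cycle endpoints of $y$; the hypothesis that (1)--(3) hold before the move constrains the admissible configurations to a short list, each routine to verify.

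For (b), the crucial lemma is that any $\pi$ satisfying (1)--(3) with $\pi \neq \alpha_{\min}(y)$ contains, on three consecutive positions of the window of $\pi^{-1}$, a locally reversible triple $[b, c, a]$ with $a < b < c$ whose reversal to $[c, a, b]$ again satisfies (1)--(3). The strategy is to compare $\pi$ with $\alpha_{\min}(y)$ cycle by cycle and locate a cycle of $y$ whose endpoints $\pi$ spreads further apart than $\alpha_{\min}(y)$ does; conditions (1)--(3) then leave precisely enough slack to guarantee that a legal local reversal exists nearby. Iterating produces a descending $\prec_\cA$-chain from $\pi$ down to $\alpha_{\min}(y)$, placing $\pi \in \cA(y)$ by Theorem~\ref{atoms-thm}.

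The main obstacle is the existence portion of (b): showing that non-minimality forces the presence of a locally reversible window triple consistent with (1)--(3). A possible alternative that sidesteps this explicit construction is to verify directly from (1)--(3) that $\pi^{-1} \circ \pi = y$, identifying $\pi$ with an element of $\HA(y)$, and then that $\ell(\pi)$ equals the minimum length among elements of $\HA(y)$. This route repackages the combinatorics through the $0$-Hecke product $\circ$ but still requires a careful inductive step to produce a reduced expression realizing minimal length.
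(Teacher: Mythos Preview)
This theorem is not proved in the present paper; it is quoted as \cite[Theorem~7.6]{M} from an earlier article, so there is no argument here to compare your proposal against.

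Your reduction to Theorem~\ref{atoms-thm} is the natural strategy, and the logical skeleton is sound, but what you have written remains an outline rather than a proof. You correctly isolate the hard step as the existence claim in (b), and your description of how to carry it out (``locate a cycle of $y$ whose endpoints $\pi$ spreads further apart \dots\ conditions (1)--(3) then leave precisely enough slack'') is a hope rather than an argument: the elementary $\prec_\cA$ moves are local to three consecutive window positions of $\pi^{-1}$, while conditions (1)--(3) are global constraints ranging over all of $\Cyc(y)$, so one must actually produce a specific reversible triple and verify that no cycle of $y$ obstructs the reversal. You also leave termination of the descending chain in (b) unaddressed; it does follow, for instance from Lemma~\ref{i<-lem} (the shape $\lambda(\cdot)$ strictly decreases in dominance order along each $\prec_\cA$-descent, and only finitely many partitions of $\ell(\pi)$ are available), but this should be stated. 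The alternative route you mention at the end, verifying $\pi^{-1}\circ\pi = y$ and matching lengths directly from (1)--(3), is a viable packaging but, as you acknowledge, still leaves the substantive inductive step to be done.
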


Suppose $E\subset \ZZ$ is a finite set of size $m$
representing distinct congruence classes modulo $n$. 
Write $\phi_E$ for the order-preserving bijection $[m] \to E$.
Theorem~\ref{local-thm} has this technical corollary:

\begin{corollary}\label{local-cor}
Suppose $y \in \tI_n$ and $\pi \in \cA(y)$ and $y(E) = E$.
Define $y' \in I_m$  and $\pi' \in S_m$
to be the unique permutations with $y'(i) < y'(j)$ if and only if $y\circ \phi_E(i) < y \circ \phi_E(j)$
and
$\pi'(i) < \pi'(j)$ if and only if $\pi\circ \phi_E(i) < \pi \circ \phi_E(j)$ for all $i,j \in [m]$.
Then  $\pi' \in \cA(y')$.
\end{corollary}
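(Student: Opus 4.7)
My plan is to verify the three conditions of Theorem~\ref{local-thm} for $\pi'$ and $y'$ by transferring each from the corresponding condition that $\pi$ satisfies with respect to $y$. First I extend $\phi_E$ to an order-preserving bijection $\tilde\phi: \ZZ \to \tilde E$, where $\tilde E := \bigcup_{k \in \ZZ}(E + kn)$, by declaring $\tilde\phi(i+m) = \tilde\phi(i) + n$ for all $i \in \ZZ$. Since $y$ is $n$-periodic and $y(E) = E$, the map $y$ preserves $\tilde E$, and the conjugate $\tilde\phi^{-1} \circ y \circ \tilde\phi$ is a well-defined element of $\tI_m$ whose restriction to $[m]$ agrees with $y'$; this confirms $y' \in I_m$ and gives the description $\Cyc(y') = \{(a_0+km,\, b_0+km) : (a_0,b_0) \in \Cyc(y') \cap ([m] \times [m]),\ k \in \ZZ\}$.

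For condition (1), take $(a,b) \in \Cyc(y')$ with $a<b$ and shift by a multiple of $m$ to put $a,b \in [m]$. The pair $(\phi_E(a), \phi_E(b)) \in \Cyc(y)$ satisfies $\phi_E(a) < \phi_E(b)$ since $\phi_E$ is order-preserving, so condition (1) for $\pi \in \cA(y)$ gives $\pi(\phi_E(b)) < \pi(\phi_E(a))$, which by the defining property of $\pi'$ translates to $\pi'(b) < \pi'(a)$. Condition (2) is similar: after shifting $a$ into $[m]$, the inequalities $a < a' \leq b' < b \leq m$ force $a',b' \in [m]$ as well, so all four indices transfer via $\phi_E$ and condition (2) for $\pi, y$ yields condition (2) for $\pi', y'$.

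The main obstacle is condition (3), since its hypothesis $a<a'$ and $b<b'$ does not confine the two cycles to a common window of period $m$. After shifting $a$ into $[m]$ I write $(a',b') = (a'_0+km,\, b'_0+km)$ with $(a'_0,b'_0) \in \Cyc(y')$ and $a'_0,b'_0 \in [m]$, and the requirement $a<a'$ with $a \geq 1$ and $a'_0 \leq m$ forces $k \geq 0$. If $k=0$, all four values lie in $[m]$ and the argument proceeds exactly as in condition (2), via condition (3) for $\pi$ at $(\phi_E(a),\phi_E(b))$ and $(\phi_E(a'_0),\phi_E(b'_0))$ in $\Cyc(y)$. If $k \geq 1$, the desired inequality $\pi'(a) < \pi'(b'_0) + km = \pi'(b')$ holds automatically, because $\pi'(a), \pi'(b'_0) \in [m]$ forces $\pi'(a) - \pi'(b'_0) \leq m-1 < km$. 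Thus condition (3) reduces to its single-window case, and Theorem~\ref{local-thm} yields $\pi' \in \cA(y')$.
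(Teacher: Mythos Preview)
Your proof is correct and follows the same strategy as the paper's one-line argument---verify the conditions of Theorem~\ref{local-thm} for $\pi'$ and $y'$ by pulling them back through $\phi_E$ to the corresponding conditions for $\pi$ and $y$---but you usefully spell out what the paper's ``by construction'' elides, especially the split in condition~(3) into the single-window case $k=0$ (handled via $\phi_E$) and the case $k\geq 1$ (where $\pi'(a)-\pi'(b'_0)\leq m-1<km$ makes the inequality automatic). One harmless slip: the extension $\tilde\phi$ need not be globally order-preserving (e.g.\ take $n=3$, $E=\{1,2+3n\}$), but you never actually use that property---only that $\phi_E$ is order-preserving on $[m]$ and that $\tilde\phi$ intertwines the $m$- and $n$-shifts---so the argument stands.
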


\begin{proof}
Write $\psi_E$ for the inverse of $\phi_E$.
Since $\Cyc(y') = \{ (\psi_E(a)+mn, \psi_E(b)+mn) : (a,b) \in \Cyc(y) \cap (E \times E),\ m \in \ZZ\}$,
the conditions in Theorem~\ref{local-thm} relative to $\pi'$ and $y'$ hold by construction.
\end{proof}

We note one other property of atoms and Hecke atoms.

\begin{lemma} Suppose $\pi \mapsto \pi^\dag$ is a length-preserving group automorphism of $\tS_n$.
For all $z \in \tI_n$, it then holds that
$\HA(z^\dag) = \{ \pi^\dag : \pi \in \HA(z)\}$ and $\cA(z^\dag) = \{\pi^\dag : \pi \in \cA(z)\}$.
\end{lemma}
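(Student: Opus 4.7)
The plan is to reduce everything to the observation that any length-preserving group automorphism $\dag$ of $\tS_n$ commutes with the Demazure product $\circ$, and then apply this commutation to the identity $\pi^{-1} \circ \pi = z$ that defines membership in $\HA(z)$.

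First I would note that because $\dag$ preserves length, each simple generator $s_i$ maps to a length-one element of $\tS_n$, which must itself be a simple reflection. Hence $\dag$ carries reduced expressions to reduced expressions; in particular, for every $\pi \in \tS_n$ and every simple generator $s_i$, one has $\ell(\pi s_i) > \ell(\pi)$ if and only if $\ell(\pi^\dag s_i^\dag) > \ell(\pi^\dag)$.

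Next I would show by induction on $\ell(\pi_2)$ that $(\pi_1 \circ \pi_2)^\dag = \pi_1^\dag \circ \pi_2^\dag$ for all $\pi_1,\pi_2 \in \tS_n$. The base case $\pi_2 = s_i$ is immediate from the recursive rule $\pi \circ s_i = \pi s_i$ when $\ell(\pi s_i) > \ell(\pi)$ and $\pi \circ s_i = \pi$ otherwise, combined with the preceding paragraph. The inductive step is handled by factoring $\pi_2 \dotequals \pi_2' s_i$ and applying associativity of $\circ$.

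With this commutation in hand, the conclusion is almost formal. An affine permutation $\pi$ lies in $\HA(z)$ exactly when $\pi^{-1} \circ \pi = z$, and applying $\dag$ yields $(\pi^\dag)^{-1} \circ \pi^\dag = z^\dag$, i.e., $\pi^\dag \in \HA(z^\dag)$. Running the same argument for the inverse automorphism $\dag^{-1}$ (which is also a length-preserving group automorphism) gives the reverse inclusion, so $\dag$ restricts to a bijection $\HA(z) \to \HA(z^\dag)$. Because this bijection preserves length, it restricts further to a bijection between the subsets $\cA(z)$ and $\cA(z^\dag)$ of minimal-length elements, which is the second claim. The only step that requires any real verification is the inductive commutation in the second paragraph, and this is routine; no significant obstacle is anticipated.
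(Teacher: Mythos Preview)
Your proposal is correct and follows essentially the same approach as the paper: both reduce the lemma to the identity $(\pi'\circ\pi'')^\dag = (\pi')^\dag \circ (\pi'')^\dag$ and then read off the result from the definition $\HA(z)=\{\pi:\pi^{-1}\circ\pi=z\}$. The only cosmetic difference is in how that commutation is verified---the paper observes that $(\pi',\pi'')\mapsto \bigl((\pi')^\dag\circ(\pi'')^\dag\bigr)^{\dag^{-1}}$ satisfies the defining properties of $\circ$ and hence equals it, whereas you carry out a direct induction on $\ell(\pi_2)$; both arguments are equally short and routine.
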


\begin{proof}
It suffices to show that $(\pi')^\dag \circ (\pi'')^\dag = (\pi'\circ \pi'')^\dag$ for all $\pi',\pi'' \in\tS_n$.
This holds since  $(s_i)^\dag \circ (s_i)^\dag = (s_i)^\dag$
for $i \in [n]$
and $(\pi')^\dag \circ (\pi'')^\dag = (\pi'\pi'')^\dag$ if $\ell(\pi'\pi'')= \ell(\pi') + \ell(\pi'')$,
so the map $\tS_n \times \tS_n \xrightarrow{ \dag \times \dag} \tS_n \times \tS_n \xrightarrow{\ \circ\ } \tS_n \xrightarrow{ \dag^{-1}} \tS_n$
coincides with $\circ$ as it shares its defining properties.
\end{proof}

Both $\pi \mapsto \pi^\circlearrowright$ and $\pi \mapsto \pi^*$
are length-preserving, so Theorem~\ref{oom-thm} implies the following:

\begin{corollary}\label{aut-cor}
If $z \in \tI_n$ then $\iF_{z^\circlearrowright} = \iF_{z}$ and $\iF_{z^*} = \omega^+(\iF_z)$.
\end{corollary}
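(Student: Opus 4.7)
The proof will be a direct application of the preceding lemma together with two identities for $\tF_\pi$ recalled earlier in the section. First I would observe that both $\pi \mapsto \pi^\circlearrowright$ and $\pi \mapsto \pi^*$ are length-preserving group automorphisms of $\tS_n$: they are defined as the unique automorphisms sending the Coxeter generators $s_i$ to $s_{i+1}$ and $s_{n-i}$, respectively, so each permutes the Coxeter generating set $\{s_1,\dots,s_n\}$ and therefore preserves reduced-expression length.

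The preceding lemma then tells us that $\cA(z^\circlearrowright) = \{\pi^\circlearrowright : \pi \in \cA(z)\}$ and $\cA(z^*) = \{\pi^* : \pi \in \cA(z)\}$. Substituting these into Definition~\ref{aff-def}, I get
\[
\iF_{z^\circlearrowright} \;=\; \sum_{\pi \in \cA(z)} \tF_{\pi^\circlearrowright}
\qquand
\iF_{z^*} \;=\; \sum_{\pi \in \cA(z)} \tF_{\pi^*}.
\]

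To finish the first identity, apply Proposition 2.5 (``If $\pi \in \tS_n$ then $\tF_{\pi^\circlearrowright} = \tF_\pi$'') termwise to rewrite the right-hand side as $\sum_{\pi \in \cA(z)} \tF_\pi = \iF_z$. For the second identity, apply the last theorem of Section~\ref{aff-sect} (``$\omega^+(\tF_\pi) = \tF_{\pi^*} = \tF_{\pi^{-1}}$'') termwise, then pull the linear map $\omega^+$ out of the sum to obtain $\iF_{z^*} = \omega^+\!\bigl(\sum_{\pi \in \cA(z)} \tF_\pi\bigr) = \omega^+(\iF_z)$.

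There is no real obstacle here: once one recognizes that $\circlearrowright$ and $*$ fall under the hypothesis of the preceding lemma, the corollary follows from combining that lemma with results of Lam already recalled. The only point requiring a moment's care is checking the length-preservation hypothesis, and this is immediate from the fact that both automorphisms act as permutations of the simple generators.
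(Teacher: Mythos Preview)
Your proposal is correct and matches the paper's approach exactly: the paper simply notes that $\pi \mapsto \pi^\circlearrowright$ and $\pi \mapsto \pi^*$ are length-preserving automorphisms, invokes the preceding lemma, and then applies Lam's identities $\tF_{\pi^\circlearrowright} = \tF_\pi$ and $\omega^+(\tF_\pi) = \tF_{\pi^*}$ termwise.
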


There is an analogue of Proposition~\ref{abs-prop} which motivates Definition~\ref{aff-def}.
 Define $\ell'(\pi)$ to be $n$ minus the number of orbits of $\pi \in\tS_n$ acting on $\ZZ/n\ZZ$.
The map $\ell' : \tS_n \to \NN$ is constant on conjugacy classes,
and 
if the congruence classes $i + n \ZZ$ and $i+1 + n \ZZ$ belong to distinct orbits under $\pi \in \tS_n$ then $\ell'(w s_i) = \ell'(w) + 1$.
 Let \be \ellhat(z) = \tfrac{1}{2}(\ell(z) +\ell'(z))\qquad\text{for $z \in \tI_n$.}\ee
By induction, the value of
$\ellhat(z)$ is always a nonnegative integer and $\ellhat(z) = \ell(\pi)$ for any $\pi \in \cA(z)$.
The homogeneous symmetric function $\iF_z$ therefore has degree $\ellhat(z)$.
Give $\QQ\tS_n$ the coalgebra structure from Proposition~\ref{coalg-prop} and write $\Delta$ for its coproduct.

\begin{proposition}\label{comod-prop}
The graded vector space $\QQ \tI_n$,
in which $z \in \tI_n$ is homogeneous of degree $\ellhat(z)$,
 is a graded right comodule for $\QQ \tS_n$ with coproduct 
$\hat\Delta: \QQ \tI_n \to \QQ\tI_n \otimes \QQ \tS_n$
given by the linear map
\[ \hat\Delta(z) = \sum_{\substack{(y,\pi) \in\tI_n \times \tS_n \\ \ellhat(z) = \ellhat(y) + \ell(\pi) \\ z = \pi^{-1} \circ y \circ \pi }} y \otimes \pi\qquad\text{for }z \in \tI_n.\]
\end{proposition}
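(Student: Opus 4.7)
The plan is to verify the three axioms of a graded right comodule structure on $\QQ\tI_n$ over $(\QQ\tS_n,\Delta)$: that $\hat\Delta$ is degree-preserving, that the counit axiom $(\id\otimes\epsilon)\hat\Delta=\id$ holds, and that coassociativity $(\id\otimes\Delta)\hat\Delta=(\hat\Delta\otimes\id)\hat\Delta$ holds. Degree-preservation is immediate from the constraint $\ellhat(z)=\ellhat(y)+\ell(\pi)$ built into the defining sum. For the counit axiom, because $\QQ\tS_n$ is graded and connected, the counit $\epsilon$ vanishes on every $\pi\in\tS_n$ except the identity, so $(\id\otimes\epsilon)\hat\Delta(z)$ reduces to the single $\pi=1$ term, which forces $y=z$ and returns $z$.

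The substantive step is coassociativity. I would expand each side as a sum in $\QQ\tI_n\otimes\QQ\tS_n\otimes\QQ\tS_n$. The left side $(\id\otimes\Delta)\hat\Delta(z)$ is indexed by triples $(y,\alpha,\beta)$ satisfying $z=(\alpha\beta)^{-1}\circ y\circ\alpha\beta$, $\ellhat(z)=\ellhat(y)+\ell(\alpha)+\ell(\beta)$, and $\ell(\alpha\beta)=\ell(\alpha)+\ell(\beta)$; the right side $(\hat\Delta\otimes\id)\hat\Delta(z)$ is indexed by triples $(y',\sigma,\pi)$ for which some intermediate involution $y_B$ satisfies $y_B=\sigma^{-1}\circ y'\circ\sigma$, $z=\pi^{-1}\circ y_B\circ\pi$, $\ellhat(y_B)=\ellhat(y')+\ell(\sigma)$, and $\ellhat(z)=\ellhat(y_B)+\ell(\pi)$. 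I would prove these two sums agree term-by-term under the identification $(y,\alpha,\beta)\leftrightarrow(y,\alpha,\beta)$, with $y_B:=\alpha^{-1}\circ y\circ\alpha$ serving as the hidden middle involution on the right.

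The key technical inputs are the associativity of $\circ$ (cited in the paper from \cite{Humphreys}) and the inverse formula $(a\circ b)^{-1}=b^{-1}\circ a^{-1}$ on $\tS_n$, which is a standard consequence of the anti-automorphism $T_w\mapsto T_{w^{-1}}$ of the $0$-Hecke algebra dual to $(\QQ\tS_n,\Delta)$. In the forward direction, these rewrite $(\alpha\beta)^{-1}\circ y\circ\alpha\beta$ as $\beta^{-1}\circ y_B\circ\beta$, using $\alpha\circ\beta=\alpha\beta$ (length-additivity). In the reverse direction, setting $\gamma:=\sigma\circ\pi$ gives $z=\gamma^{-1}\circ y'\circ\gamma$ by the same tools. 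The remaining length conditions are then pinned down by a squeeze resting on the bound $\ellhat(\tau^{-1}\circ w\circ\tau)\le\ellhat(w)+\ell(\tau)$; this bound follows by choosing $\sigma\in\cA(w)$ and observing that $\tau^{-1}\circ w\circ\tau=(\sigma\circ\tau)^{-1}\circ(\sigma\circ\tau)$, so that $\sigma\circ\tau\in\HA(\tau^{-1}\circ w\circ\tau)$, giving $\ellhat(\tau^{-1}\circ w\circ\tau)\le\ell(\sigma\circ\tau)\le\ell(\sigma)+\ell(\tau)=\ellhat(w)+\ell(\tau)$ by minimality of $\ellhat$ among lengths of Hecke atoms. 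Combined with the hypothesized total length $\ellhat(z)$, this forces all intermediate inequalities to be equalities; in particular, the reverse direction yields $\ell(\gamma)=\ell(\sigma)+\ell(\pi)$, so $\sigma\pi$ is length-additive and $\gamma=\sigma\pi$, producing a genuine left-side triple.

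The anticipated main obstacle is the bookkeeping needed to reconcile the two different-looking length-additivity conditions on the two sides of the coassociativity identity — reducedness of the group product $\alpha\beta$ on the left versus length-additivity of successive involution conjugations on the right — and this is handled entirely by the squeeze argument described above, with no further combinatorial input specific to affine permutations required.
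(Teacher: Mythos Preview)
Your proof is correct and follows the same approach as the paper, which offers only the one-line justification ``It suffices to check that $(\hat\Delta \otimes \id) \circ \hat\Delta = (\id \otimes \Delta) \circ \hat\Delta$. This holds by the associativity of $\circ$.'' You have supplied the details the paper omits: the counit axiom, degree-preservation, and especially the squeeze argument using the bound $\ellhat(\tau^{-1}\circ w\circ\tau)\le\ellhat(w)+\ell(\tau)$ to reconcile the length conditions on the two sides, which is exactly what is needed to make ``by associativity of $\circ$'' into an honest proof.
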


\begin{proof}
It suffices to check that $(\hat\Delta \otimes \id) \circ \hat\Delta = (\id \otimes \Delta) \circ \hat\Delta$.
This holds by the associativity of $\circ$.
\end{proof}

Let $\fk F$ be the graded coalgebra morphism $\QQ \tS_n \to \Sym$ with $\pi \mapsto F_\pi$
for $\pi \in \tS_n$.
The graded vector space $\QQ \tI_n$ is then a graded right $\QSym$-comodule
with respect to the coproduct
$(\id \otimes \fk F) \circ \hat\Delta$.
The graded coalgebra $\QSym$ is automatically a graded right comodule for itself.

\begin{proposition}\label{comod-prop2}
The linear map with $z \mapsto \iF_z$ for $z \in \tI_n$ is the unique morphism  
of graded right $\QSym$-comodules $\QQ \tI_n \to \QSym$
satisfying $1 \mapsto 1 \in \QQ[[x_1,x_2,\dots]]$.
\end{proposition}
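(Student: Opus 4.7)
The plan is to establish existence and uniqueness separately. After composing with $\fk F$ on the second tensor factor, the $\QSym$-comodule morphism condition for $z \mapsto \iF_z$ becomes
\[\Delta_{\QSym}(\iF_z) = \sum_{(y,\pi)} \iF_y \otimes \tF_\pi,\]
where the sum ranges over pairs $(y,\pi) \in \tI_n \times \tS_n$ with $z = \pi^{-1}\circ y \circ \pi$ and $\ellhat(z) = \ellhat(y) + \ell(\pi)$. Expanding the left side via Definition~\ref{aff-def} and Corollary~\ref{coprod-cor} yields $\sum_{\alpha \in \cA(z)}\sum_{\alpha \dotequals \alpha' \alpha''} \tF_{\alpha'} \otimes \tF_{\alpha''}$, while the right side becomes $\sum_{(y,\pi)}\sum_{\beta \in \cA(y)} \tF_\beta \otimes \tF_\pi$. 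I would match these by the bijection sending $(\alpha, (\alpha',\alpha''))$ to $(y,\pi,\beta) := ((\alpha')^{-1}\circ \alpha',\, \alpha'',\, \alpha')$, with inverse $(y,\pi,\beta) \mapsto (\beta\pi, (\beta,\pi))$.

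The main obstacle is verifying that these assignments are well-defined. In the forward direction, the key point is showing $\alpha' \in \cA(y)$, i.e., $\ellhat(y) = \ell(\alpha')$. Suppose instead $\ellhat(y) < \ell(\alpha')$; for any $\beta_0 \in \cA(y)$, the Demazure product $\gamma := \beta_0 \circ \alpha''$ satisfies $\gamma^{-1}\circ \gamma = (\alpha'')^{-1}\circ y \circ \alpha'' = \alpha^{-1}\circ \alpha = z$ by associativity of $\circ$ and the fact that $(u\circ v)^{-1} = v^{-1}\circ u^{-1}$, but $\ell(\gamma) \leq \ell(\beta_0) + \ell(\alpha'') < \ellhat(z)$, contradicting the minimality of lengths among elements of $\HA(z)$. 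In the reverse direction, the same Demazure computation shows $\beta\circ\pi \in \HA(z)$, and the bounds $\ellhat(z) \leq \ell(\beta\circ\pi) \leq \ell(\beta) + \ell(\pi) = \ellhat(y) + \ell(\pi) = \ellhat(z)$ force $\beta\circ\pi = \beta\pi$ length-additively with $\beta\pi \in \cA(z)$.

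For uniqueness, suppose $\phi \colon \QQ\tI_n \to \QSym$ is any graded $\QSym$-comodule morphism satisfying $\phi(1) = 1$, and let $d = \ellhat(z)$. The morphism condition gives
\[\Delta_{\QSym}(\phi(z)) = \sum_{(y,\pi)} \phi(y) \otimes \tF_\pi,\]
summed over the same indexing set. Since $\QSym$ is graded connected and $\phi$ preserves degree, the bidegree-$(0,d)$ component of the left side equals $1 \otimes \phi(z)$ (by the counit axiom applied in the first tensor factor). On the right, a pair $(y,\pi)$ contributes to bidegree $(0,d)$ only when $\ellhat(y) = 0$, forcing $y = 1$, and $\ell(\pi) = d$, forcing $\pi \in \cA(z)$; this contributes $1 \otimes \iF_z$. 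Comparing bidegree-$(0,d)$ components yields $\phi(z) = \iF_z$, completing the argument.
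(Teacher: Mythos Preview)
Your proof is correct and follows essentially the same strategy as the paper's. For uniqueness, the paper applies the counit $\epsilon$ in the first tensor factor, which is exactly your extraction of the bidegree-$(0,d)$ component phrased slightly differently; both identify the $y=1$ summand as $1\otimes\iF_z$. For existence, the paper simply cites Corollary~\ref{coprod-cor} and leaves the rest implicit, whereas you spell out the bijection between factorizations $\alpha \dotequals \alpha'\alpha''$ with $\alpha\in\cA(z)$ and triples $(y,\pi,\beta)$, including the key verification (via a minimality argument using the Demazure product) that $\alpha'$ lands in $\cA(y)$ rather than merely in $\HA(y)$; this is a genuine detail the paper elides.
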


\begin{proof}
Let $\epsilon$ be the counit of $\QSym$.
Any such $\QSym$-comodule morphism  $\hat{\fk F} : \QQ \tI_n \to \QSym$
must satisfy
$1 \otimes \hat{\fk F}(z) = (\epsilon \otimes \id)\circ \Delta \circ \hat{\fk F}(z) =
(\epsilon \otimes \id) \circ (\hat{\fk F} \otimes \fk F) \circ  \hat\Delta(z) 
= 1 \otimes \iF_z$ for $z \in \tI_n$.
On the other hand, it follows from Corollary~\ref{coprod-cor} that the given map is 
a graded $\QSym$-comodule morphism.
\end{proof}

\begin{corollary}
If $z \in \tI_n$ then 
$ \Delta( \iF_z) = \sum \iF_y \otimes \tF_{\pi}$
where the sum is over all pairs $(y,\pi) \in \tI_n \times \tS_n$ with $\ellhat(z) = \ellhat(y) + \ellhat(\pi)$ and $z = \pi^{-1}\circ y \circ \pi$.
\end{corollary}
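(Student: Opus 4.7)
The plan is to mimic the proof of Corollary~\ref{coprod-cor}: simply apply the comodule morphism from Proposition~\ref{comod-prop2} to the explicit coproduct formula from Proposition~\ref{comod-prop}. Everything has been arranged so that this is essentially a one-line verification, and no new combinatorial input is required.

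In detail, recall that by Proposition~\ref{abs-prop} the linear map $\fk F : \QQ\tS_n \to \QSym$ defined by $\fk F(\pi) = \tF_\pi$ is a morphism of coalgebras. Using this, Proposition~\ref{comod-prop} makes $\QQ\tI_n$ into a graded right $\QSym$-comodule via the composite $(\id \otimes \fk F)\circ \hat\Delta$, and Proposition~\ref{comod-prop2} asserts that the linear map $\hat{\fk F}: \QQ\tI_n \to \QSym$ with $z \mapsto \iF_z$ is a morphism of right $\QSym$-comodules. Unwinding the definition of a comodule morphism, this last statement means exactly that
\[
\Delta \circ \hat{\fk F} \;=\; (\hat{\fk F} \otimes \id_{\QSym}) \circ (\id \otimes \fk F) \circ \hat\Delta \;=\; (\hat{\fk F} \otimes \fk F)\circ \hat\Delta
\]
as linear maps $\QQ\tI_n \to \QSym \otimes \QSym$.

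To finish, I would evaluate both sides at an arbitrary $z \in \tI_n$. The left side is $\Delta(\iF_z)$. For the right side, the explicit formula for $\hat\Delta$ in Proposition~\ref{comod-prop} gives
\[
(\hat{\fk F} \otimes \fk F)\circ \hat\Delta(z) \;=\; \sum_{(y,\pi)} \hat{\fk F}(y) \otimes \fk F(\pi) \;=\; \sum_{(y,\pi)} \iF_y \otimes \tF_\pi,
\]
where the sum runs over pairs $(y,\pi)\in \tI_n \times \tS_n$ with $z = \pi^{-1}\circ y \circ \pi$ and $\ellhat(z) = \ellhat(y) + \ell(\pi)$, matching the stated indexing set. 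There is no real obstacle here; all the work went into establishing the two preceding propositions, and the corollary is just their formal combination.
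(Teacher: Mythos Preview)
Your proof is correct and follows exactly the same approach as the paper: apply the comodule morphism $z\mapsto \iF_z$ from Proposition~\ref{comod-prop2} to the explicit formula for $\hat\Delta(z)$ in Proposition~\ref{comod-prop}. Your write-up simply unpacks in more detail what ``morphism of right $\QSym$-comodules'' means, arriving at the identity $\Delta\circ\hat{\fk F}=(\hat{\fk F}\otimes\fk F)\circ\hat\Delta$, which is precisely what the paper's one-line proof is using.
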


\begin{proof}
Apply the morphism in Proposition~\ref{comod-prop2} to both sides of the formula defining $\hat\Delta(z)$.
\end{proof}

The notions of codes, shapes, and so forth for affine permutations have analogues for involutions.
Most of the following definitions are affine generalizations of constructions from
 \cite{HMP1,HMP4}.

\begin{definition}
The \emph{involution code} of $z \in \tI_n$ is the sequence $\hat c(z) = (c_1,c_2,\dots,c_n)$
where $c_i$ is the number of integers $j \in \ZZ$ with $i<j$ and $z(i) > z(j)$ and $i \geq z(j)$.
\end{definition}

An integer $i \in \ZZ$ is a \emph{visible descent} of $z \in \tI_n$ if $z(i) > z(i+1)$ and $i \geq z(i+1)$.
Let \[
\DesV(z )= \{ s_i : i\in \ZZ\text{ is a visible descent of }z\}.\]

\begin{lemma}\label{desv-lem}
If $z \in \tI_n$ then $ \DesV(z) = \DesR(\alpha_{\min}(z))$ and $\hat c(z) = c(\alpha_{\min}(z)).$
\end{lemma}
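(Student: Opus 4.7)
The plan is to prove both identities by analyzing $\alpha := \alpha_{\min}(z)$ directly from the explicit formula \eqref{amin-eq}. The starting observation is that the interleaved sequence $[z(a_1), a_1, z(a_2), a_2,\ldots, z(a_l), a_l]$ is already ``almost'' a valid window: since $a_1,\ldots,a_l\in[n]$ are distinct, each $a_k$ contributes a new residue class mod $n$, and each $z(a_k)$ with $z(a_k)>a_k$ also contributes a new residue class (because the pair $\{a_k,z(a_k)\}$ would otherwise collide with some $\{a_j,z(a_j)\}$, contradicting that $z$ is an involution on $\ZZ$). The only redundancies arise when $z(a_k)=a_k$ is a fixed point, in which case the $[[\cdots]]$ operation deletes the duplicate entry $a_k$. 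Thus the reduced window of $\alpha^{-1}$ consists of the positions of the $z(a_k)$ and of the non-fixed $a_k$, read in the stated order.

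From this I would derive an explicit formula for $\alpha(k)$ for arbitrary $k\in\ZZ$: using affine periodicity $\alpha(k+n)=\alpha(k)+n$, it suffices to locate the unique window entry congruent to $k$ and record its position. In particular one obtains the structural fact that $\alpha(z(a_k))<\alpha(a_k)$ for each non-fixed $a_k$, and that among the values $\alpha(a_1),\ldots,\alpha(a_l)$ the order is preserved. This is precisely what should make $\alpha$ the minimal atom.

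For the descent identity, I would fix $i\in\ZZ$ and perform a case analysis on the pair $(i,i+1)$ according to whether each is a fixed point, an ``up'' point ($z(j)>j$), or a ``down'' point ($z(j)<j$) of $z$. In each case the formula from Part 1 gives $\alpha(i)$ and $\alpha(i+1)$ in terms of the positions of their orbit representatives in the window, and one checks that $\alpha(i)>\alpha(i+1)$ holds if and only if $z(i)>z(i+1)$ and $i\geq z(i+1)$. For the code identity, I would count inversions of $\alpha$ at position $i$, i.e.\ integers $j>i$ with $\alpha(j)<\alpha(i)$, and exhibit a bijection with pairs $(i,j)$ such that $z(i)>z(j)$ and $i\geq z(j)$; the bijection should be the identity on pairs, with the two sets of conditions shown equivalent via the explicit description of $\alpha$. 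Alternatively, once Part 1 and the descent identity are in hand, one can conclude the code identity by induction on the number of visible descents, using the transformation rule \eqref{ccc-eq}: apply $s_i$ on the right to $\alpha$ for a visible descent $i$ and verify that this corresponds, on the involution side, to the appropriate modification of $z$ for which $\hat c$ transforms by the same cyclic permutation-minus-one rule.

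The main obstacle will be the case analysis in Part 2: one must track the relative positions, within the reduced window, of $i$, $i+1$, $z(i)$, $z(i+1)$, and the fixed points separating them, handling the affine wraparound ($z(i+1)\leq i$ allowing $z(i+1)$ to lie well below $[n]$) and distinguishing which of $i,i+1$ serve as the ``opener'' $a_k$ versus the ``closer'' $z(a_k)$ of their respective orbit pairs. Organizing these subcases so that the visible descent condition $z(i)>z(i+1)$ and $i\geq z(i+1)$ emerges uniformly is the crux; everything else is straightforward bookkeeping.
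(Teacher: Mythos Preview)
Your plan is workable but considerably more elaborate than what the paper does, and it misses a unifying simplification. You propose to first handle $\DesV(z)=\DesR(\alpha_{\min}(z))$ by a case analysis on the types (fixed/up/down) of the consecutive positions $i$ and $i+1$, and then to deduce the code identity either by a separate bijection or by an induction using the transformation rule~\eqref{ccc-eq}. The paper instead proves a single statement that implies both at once: for \emph{arbitrary} integers $i<j$, it determines exactly when $(i,j)$ is an inversion of $\alpha_{\min}(z)$, showing directly from~\eqref{amin-eq} that $\alpha_{\min}(z)(i)>\alpha_{\min}(z)(j)$ if and only if $z(i)>z(j)$ and $i\ge z(j)$. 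Summing over $j>i$ gives $\hat c(z)=c(\alpha_{\min}(z))$, and specializing to $j=i+1$ gives the descent identity; no separate case analysis on consecutive positions is needed, and no induction is required.

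Two further cautions about your outline. First, the ``alternative'' inductive route for the code identity is delicate: the transformation rule for $\hat c$ under passing from $z$ to $z_{\downarrow(i)}$ is Proposition~\ref{icode-prop} in the paper, and its proof \emph{uses} Lemma~\ref{desv-lem}, so you would need to establish that rule independently to avoid circularity. Second, your Part~2 case analysis, as you yourself flag, must handle affine wraparound and the relative placement of $i$, $i+1$, $z(i)$, $z(i+1)$; this is exactly the complexity that the paper's approach sidesteps by working with a generic pair $i<j$ rather than consecutive positions.
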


For example, if $z =[1, -2, 7, 5, 4]= t_{3,7}t_{4,5} \in \tI_5$ and $\alpha_{\min}(z) = [1, 7, 3, 5, 4]^{-1}=[ 2, -2, 4, 6, 5]$,
then $\hat c(z) = c(\alpha_{\min}(z)) = (1, 0, 1, 2, 1)$ and $\DesV(z)  =\DesR(\alpha_{\min}(z))= \{1,4\}$.

\begin{proof}
Fix $z \in \tI_n$ and integers $i<j$. 
It is clear from the definition \eqref{amin-eq}
that $\alpha_{\min}(z)(i) > \alpha_{\min}(z)(j)$
if and only if either
$z(j) < z(i) < i < j$
or
$z(j) < i \leq z(i) < j$
or
$z(j) < i < j < z(i)$
or
$z(j) = i < z(i) = j$.
One of these cases occurs precisely when $z(i) > z(j)$ and $i \geq z(j)$.
We conclude that $\hat c(z) = c(\alpha_{\min}(z))$ and, taking $j=i+1$, that $ \DesV(z) = \DesR(\alpha_{\min}(z))$.
\end{proof}

It follows that every involution in $\tI_n -\{1\}$ has at least one visible descent.

\begin{corollary}\label{icode-cor}
Suppose $z \in \tI_n$ and $\hat c(z) = (c_1,c_2,\dots,c_n)$.
Then $\ellhat(z) =c_1+c_2+\dots+c_n $, and an integer
$i \in \ZZ$ is a visible descent of $z$ 
if and only if $c_i > c_{i+1}$, interpreting indices modulo $n$.
\end{corollary}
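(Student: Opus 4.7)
The plan is to deduce both parts of the corollary directly from Lemma~\ref{desv-lem} combined with the analogous facts for ordinary affine permutations established in Section~\ref{aff-sect}. The basic idea is that everything we need to say about $z$ is already encoded in the atom $\alpha_{\min}(z)$, whose code and right descent set exactly record the involution code and visible descents of $z$.

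First I would dispense with the length identity. By Lemma~\ref{desv-lem} we have $\hat c(z) = c(\alpha_{\min}(z))$, and by the general fact stated just before Definition~\ref{shape-def}, $|c(\pi)| = \ell(\pi)$ for every $\pi \in \tS_n$. Since $\alpha_{\min}(z) \in \cA(z)$ and every element of $\cA(z)$ has length $\ellhat(z)$ (the remark immediately following the definition of $\ellhat$), it follows that
\[
c_1 + c_2 + \dots + c_n \;=\; |c(\alpha_{\min}(z))| \;=\; \ell(\alpha_{\min}(z)) \;=\; \ellhat(z).
\]

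Next I would handle the descent characterization. By Lemma~\ref{desv-lem}, the visible descents of $z$ are exactly the right descents of $\alpha_{\min}(z)$, so $i$ is a visible descent of $z$ if and only if $s_i \in \DesR(\alpha_{\min}(z))$. By the characterization of descents in terms of codes for affine permutations (again from the text just before Definition~\ref{shape-def}), this happens exactly when the $i$-th and $(i+1)$-st entries of $c(\alpha_{\min}(z))$ satisfy $c_i > c_{i+1}$, with indices interpreted modulo $n$ via $c_{n+1} = c_1$. Since $c(\alpha_{\min}(z)) = \hat c(z)$, this is the desired statement.

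Essentially no obstacle is expected: both claims are formal consequences of Lemma~\ref{desv-lem} together with already-quoted properties of codes of affine permutations. The only mild subtlety is the cyclic convention on the index $i$, but this is already accounted for in the definition of visible descent (indexed by $i \in \ZZ$) and in the cyclic convention $c_{n+1} = c_1$ used for affine codes, so the translation is automatic.
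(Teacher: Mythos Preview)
Your proof is correct and follows essentially the same approach as the paper, which simply notes that since $\ellhat(z) = \ell(\alpha_{\min}(z))$, both claims are immediate from Lemma~\ref{desv-lem}. You have just spelled out the intermediate steps (invoking $|c(\pi)| = \ell(\pi)$ and the code-descent characterization from Section~\ref{aff-sect}) that the paper leaves implicit.
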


\begin{proof}
Since $\ellhat(z) = \ell(\alpha_{\min}(z))$, these properties are immediate from Lemma~\ref{desv-lem}.
\end{proof}

\begin{corollary}
For $z \in \tI_n$, the following are equivalent:
\ben
\item[(a)] $\DesV(z) \subset \{ s_n\}$.
\item[(b)] $\hat c(z)$ is weakly increasing.
\item[(c)] $\alpha_{\min}(z)^{-1}$ is Grassmannian.
\een
\end{corollary}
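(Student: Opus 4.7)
The proof will be essentially a chain of identifications using the earlier results, with no real new content — each step is immediate once the right dictionary is in place.

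First I would establish the equivalence (a) $\Leftrightarrow$ (b) using Corollary~\ref{icode-cor}. Writing $\hat c(z) = (c_1, c_2, \dots, c_n)$, that corollary says $s_i \in \DesV(z)$ iff $c_i > c_{i+1}$ (with indices read cyclically so $c_{n+1} = c_1$). Hence $\DesV(z) \subset \{s_n\}$ is equivalent to the assertion that $c_i \leq c_{i+1}$ for $i = 1, 2, \dots, n-1$, which is exactly the statement that $\hat c(z)$ is weakly increasing as a sequence. Notice that this characterization is inherently ``linear'' rather than cyclic, because excluding $s_n$ from $\DesV(z)$ removes the wrap-around comparison between $c_n$ and $c_1$.

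Next I would show (b) $\Leftrightarrow$ (c) by appealing to Lemma~\ref{desv-lem}, which gives $\hat c(z) = c(\alpha_{\min}(z))$. The discussion immediately preceding the definition of affine Schur functions in Section~\ref{aff-sect} shows that a permutation $\pi \in \tS_n$ is Grassmannian iff $c(\pi^{-1})$ is weakly increasing. Applying this with $\pi = \alpha_{\min}(z)^{-1}$, we find that $\alpha_{\min}(z)^{-1}$ is Grassmannian iff $c(\alpha_{\min}(z))$ is weakly increasing iff $\hat c(z)$ is weakly increasing.

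A single alternative route, which would give (a) $\Leftrightarrow$ (c) directly, is to use the other identification from Lemma~\ref{desv-lem}: $\DesV(z) = \DesR(\alpha_{\min}(z)) = \DesL(\alpha_{\min}(z)^{-1})$, after which the characterization $\DesL(\pi) \subset \{s_n\} \Leftrightarrow \pi$ Grassmannian applied to $\pi = \alpha_{\min}(z)^{-1}$ closes the loop. There is no genuine obstacle here; the only thing requiring care is that one must distinguish the cyclic comparison defining visible descents from the strictly linear monotonicity of the code, and confirm that excluding $s_n$ is exactly what converts the former into the latter.
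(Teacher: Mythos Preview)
Your proof is correct and follows essentially the same approach as the paper, which simply cites Lemma~\ref{desv-lem} together with the discussion after Definition~\ref{code-def}. Your use of Corollary~\ref{icode-cor} for (a)$\Leftrightarrow$(b) amounts to the same thing, since that corollary is itself an immediate consequence of Lemma~\ref{desv-lem}.
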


\begin{proof}
Given Lemma~\ref{desv-lem}, this is immediate from
the discussion after Definition~\ref{code-def}.
\end{proof}

This corollary suggests the property $\DesV(z) \subset \{ s_n\}$
as a natural definition for the ``involution'' analogue for a Grassmannian permutation.
However, the functions $\iF_z$ indexed by $z \in \tI_n$ with this property
fail to span $\QQ\spanning\{ \iF_z : z \in \tI_n\}$, although they are linearly independent.

Given $z \in \tI_n$ and $i \in \ZZ$,
define $z_{\downarrow(i)} \in \tI_n$ to be $zs_i$ if $zs_i=s_iz$ and $s_izs_i$ otherwise.
If $z(i)>z(i+1)$ then
$z_{\downarrow(i)}$ is the unique element of $\tI_n$ with $z_{\downarrow(i)} \neq s_i \circ z_{\downarrow(i)} \circ s_i = z$.

\begin{proposition}\label{icode-prop}
Let $z \in \tI_n$ and $\hat c(z) = (c_1,c_2,\dots,c_n)$.
Suppose $i \in [n]$ and $z(i)>z(i+1)$.
\ben
\item[(a)] If $i$ is a visible descent of $z$ then $\hat c(z_{\downarrow(i)}) = (c_1,\dots, c_{i-1}, c_{i+1},c_i - 1,c_{i+2},\dots, c_n)$.
\item[(b)] Assume $i$ is not a visible descent of $z$. Let $j \in [n]\setminus\{i\}$ be such that $j \equiv  z(i+1) \modu n)$.
\begin{itemize}
\item If $z(i + 1) = i+1$ then
$\hat c(z_{\downarrow(i)}) = (c_1,\dots, c_{i-1}, c_i -1 , c_{i+1}\dots, c_n)$.
\item If $z(i+1) \neq i + 1$ then
$\hat c(z_{\downarrow(i)}) = (c_1,\dots, c_{j-1}, c_j  - 1, c_{j+1},\dots, c_n)$.
\end{itemize}
\een
In both parts, indices are interpreted modulo $n$ as necessary. 
\end{proposition}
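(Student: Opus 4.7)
The plan is to apply Lemma~\ref{desv-lem} throughout, which identifies $\hat c(z)$ with $c(\alpha_{\min}(z))$, and to reduce each part of the proposition to the known transformation \eqref{ccc-eq} of the ordinary code $c$ under right multiplication by a descent generator.

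For part (a), since $i$ is a visible descent of $z$, Lemma~\ref{desv-lem} gives $i \in \DesR(\alpha_{\min}(z))$. I would first establish the key identity $\alpha_{\min}(z_{\downarrow(i)}) = \alpha_{\min}(z)\, s_i$. The proof splits in two steps. First, a Demazure product calculation using associativity yields $(\alpha_{\min}(z) s_i)^{-1} \circ (\alpha_{\min}(z) s_i) = s_i \circ z \circ s_i$; since $\ell(\alpha_{\min}(z) s_i) = \ellhat(z) - 1 = \ellhat(z_{\downarrow(i)})$, the result cannot equal $z$, so by the uniqueness characterization of $z_{\downarrow(i)}$ it must coincide with $z_{\downarrow(i)}$. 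Hence $\alpha_{\min}(z) s_i \in \cA(z_{\downarrow(i)})$. Second, I would verify that this atom is actually the minimum $\alpha_{\min}(z_{\downarrow(i)})$, by comparing the window produced by \eqref{amin-eq} for $z_{\downarrow(i)}$ with the window of $\alpha_{\min}(z) s_i$. Once the identity is in hand, applying \eqref{ccc-eq} at position $i$ immediately gives the claimed code transformation.

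For part (b), the hypothesis $z(i) > z(i+1)$ together with $i$ not being a visible descent forces $i < z(i+1) \leq z(i)$. In particular $z$ does not swap $\{i, i+1\}$, so $z_{\downarrow(i)} = s_i z s_i$. Here $i \notin \DesR(\alpha_{\min}(z))$, so the strategy from part (a) does not apply directly. Instead I would compute $c'_k := \hat c(z_{\downarrow(i)})_k$ directly from the definition and compare with $c_k$ entry by entry. In sub-case (b1), $i+1$ is a fixed point of $z$ and $i$ becomes a fixed point of $z_{\downarrow(i)}$; the only index whose contribution changes is $j = i+1$ in the count defining $c_i$, which yields $c'_i = c_i - 1$ and $c'_k = c_k$ for $k \neq i$. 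In sub-case (b2), the pair $\{i+1, z(i+1)\}$ shifts to a pair involving $i$, and analogous bookkeeping localizes the change at position $j \in [n] \setminus \{i\}$ with $j \equiv z(i+1) \pmod{n}$.

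The main obstacle is part (b). Since the passage $z \mapsto z_{\downarrow(i)}$ is not induced by a right-descent operation at $i$ on $\alpha_{\min}(z)$, the decrement position must be read off indirectly from the residue of $z(i+1)$ modulo $n$, and the argument requires showing that $c'_k = c_k$ for every $k \in [n]$ other than the single decrement position. The bookkeeping entails sorting the indices $j > k$ according to whether $j \equiv i$ or $i+1$ modulo $n$ and invoking the inequalities $z(i) > z(i+1) > i$ to cancel contributions in pairs. This step is routine but delicate; once carried out, both sub-cases of part (b) follow uniformly.
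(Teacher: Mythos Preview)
Your approach to part~(a) matches the paper's: both establish $\alpha_{\min}(z_{\downarrow(i)}) = \alpha_{\min}(z)\,s_i$ and then invoke \eqref{ccc-eq}. The paper verifies this identity directly from formula~\eqref{amin-eq} rather than via your two-step Demazure argument, but the content is the same.

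For part~(b) the two routes diverge. You abandon Lemma~\ref{desv-lem} and propose to compute $\hat c(z_{\downarrow(i)})_k$ entry by entry from the defining conditions $j>k$, $z(k)>z(j)$, $k\geq z(j)$. The paper instead keeps Lemma~\ref{desv-lem} in play: it writes down the window of $\alpha_{\min}(z)^{-1}$ and $\alpha_{\min}(z_{\downarrow(i)})^{-1}$ explicitly from \eqref{amin-eq}, observes that they differ only in three (respectively four) consecutive positions---namely $[\dots,z(i),i,i+1,\dots]$ versus $[\dots,i,z(i),i+1,\dots]$ when $z(i+1)=i+1$, and $[\dots,z(i),i,z(i+1),i+1,\dots]$ versus $[\dots,z(i+1),i,z(i),i+1,\dots]$ when $z(i+1)\neq i+1$---and then reads off the change in the ordinary code $c$ directly. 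This is shorter and more uniform with part~(a), since it reduces both cases to a local window comparison rather than a global cancellation argument over all residue classes.

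Your direct route is workable, but note a slip in your sketch of sub-case~(b1): the index whose contribution to $c_i$ actually changes is $j=z(i)$, not $j=i+1$. Indeed, $z(i+1)=i+1>i$ fails the condition $i\geq z(j)$ both before and after, whereas $z(z(i))=i$ satisfies it before but $z_{\downarrow(i)}(z(i))=i+1$ violates $z_{\downarrow(i)}(i)>z_{\downarrow(i)}(j)$ after. The remaining verification that $c'_k=c_k$ for $k\neq i$ then requires tracking the three residue classes $i$, $i+1$, $z(i)$ through each $c_k$, which is exactly the ``routine but delicate'' bookkeeping you anticipate; the paper's window comparison sidesteps this entirely.
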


\begin{proof}
First suppose $i$ is a visible descent of $z$, so that $i$ is a descent of $\alpha_{\min}(z)$ by Lemma~\ref{desv-lem}.
From the formula \eqref{amin-eq},
it is straightforward to check that $\alpha_{\min}(z) s_i = \alpha_{\min}(z_{\downarrow(i)})$.
Part (a) therefore follows from \eqref{ccc-eq} and Lemma~\ref{desv-lem}.

To prove part (b), suppose $i$ is not a visible descent of $z$.
We then must have $z(i)>z(i+1)>i$. If $z(i+1)=i+1$, then the formula \eqref{amin-eq} implies that $\alpha_{\min}(z)^{-1}$ and $\alpha_{\min}(z_{\downarrow(i)})^{-1}$ have windows that are identical except in three consecutive positions where $\alpha_{\min}(z)^{-1}=[\dots,z(i),i,i+1,\dots]$ and $\alpha_{\min}(z_{\downarrow(i)})^{-1}=[\dots,i,z(i),i+1,\dots]$. Comparing these formulas, we see that $c_i(\alpha_{\min}(z_{\downarrow(i)}))=c_i(\alpha_{\min}(z))-1$ and $c_k(\alpha_{\min}(z_{\downarrow(i)}))=c_k(\alpha_{\min}(z))$ for $k\in [n]\setminus\{i\}$, so the desired result follows from Lemma~\ref{desv-lem}.

Alternatively, 
if $z(i+1)\neq i+1$, then $z(i+1)>i+1$ and the formula \eqref{amin-eq} implies that $\alpha_{\min}(z)^{-1}$ and $\alpha_{\min}(z_{\downarrow(i)})^{-1}$ have windows that are identical except in four consecutive positions where $\alpha_{\min}(z)^{-1}=[\dots,z(i),i,z(i+1),i+1,\dots]$ and $\alpha_{\min}(z_{\downarrow(i)})^{-1}=[\dots,z(i+1),i,z(i),i+1,\dots]$. Comparing these formulas, we see that $c_j(\alpha_{\min}(z_{\downarrow(i)}))=c_j(\alpha_{\min}(z))-1$ and $c_k(\alpha_{\min}(z_{\downarrow(i)}))=c_k(\alpha_{\min}(z))$ for $k \in [n]\setminus\{j\}$, so the desired result again follows from Lemma~\ref{desv-lem}.
\end{proof}

\begin{corollary} 
The involution code is an injective map $\hat c : \tI_n \to \NN^n - \PP^n$.
\end{corollary}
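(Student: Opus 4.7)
The plan is to reduce the injectivity of $\hat c$ to the known bijectivity of the ordinary code $c : \tS_n \to \NN^n - \PP^n$ via the identity $\hat c(z) = c(\alpha_{\min}(z))$ from Lemma~\ref{desv-lem}.

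First, I would observe that the map $\alpha_{\min} : \tI_n \to \tS_n$ is itself injective. Indeed, by construction $\alpha_{\min}(z) \in \cA(z) \subset \HA(z)$, and the defining property of $\HA(z)$ gives $\alpha_{\min}(z)^{-1} \circ \alpha_{\min}(z) = z$. Hence if $\alpha_{\min}(z) = \alpha_{\min}(z')$ then applying $\pi \mapsto \pi^{-1}\circ \pi$ to both sides forces $z = z'$.

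Now combine this with Lemma~\ref{desv-lem}: if $\hat c(z) = \hat c(z')$, then $c(\alpha_{\min}(z)) = c(\alpha_{\min}(z'))$, and since $c$ is a bijection $\tS_n \to \NN^n - \PP^n$ (per the discussion after Definition~\ref{code-def}), we conclude $\alpha_{\min}(z) = \alpha_{\min}(z')$, hence $z = z'$ by the previous paragraph. The fact that $\hat c$ takes values in $\NN^n - \PP^n$ likewise follows immediately from $\hat c = c \circ \alpha_{\min}$.

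There is essentially no obstacle here beyond verifying the chain $\hat c = c \circ \alpha_{\min}$, which is already handed to us by Lemma~\ref{desv-lem}. As an alternative, one could give an inductive proof based on Proposition~\ref{icode-prop}(a) and Corollary~\ref{icode-cor}: any nonidentity $z \in \tI_n$ has some index $i$ with $c_i > c_{i+1}$ in $\hat c(z)$, this $i$ is automatically a visible descent, and part (a) of Proposition~\ref{icode-prop} shows that $\hat c(z_{\downarrow(i)})$ is determined by $\hat c(z)$ while $\ellhat(z_{\downarrow(i)}) = \ellhat(z) - 1$; induction on $\ellhat(z)$ would then recover $z$ from $\hat c(z)$. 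I would opt for the first, more compact argument.
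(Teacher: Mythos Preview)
Your primary argument is correct and cleaner than what the paper does. The paper's one-line proof reads ``This follows by induction from Corollary~\ref{icode-cor} and Proposition~\ref{icode-prop},'' i.e.\ it takes precisely the alternative route you sketch at the end: pick a visible descent $i$ (guaranteed by Corollary~\ref{icode-cor} whenever $\hat c(z)\neq 0$), use Proposition~\ref{icode-prop}(a) to read off $\hat c(z_{\downarrow(i)})$, recover $z_{\downarrow(i)}$ by induction on $\ellhat$, and then recover $z = s_i \circ z_{\downarrow(i)} \circ s_i$.

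Your main argument bypasses this recursion entirely by noting that $\hat c$ factors as $c\circ\alpha_{\min}$, with both factors injective. This is shorter and uses less machinery (in particular it never touches Proposition~\ref{icode-prop}), at the cost of relying on the fact that $\alpha_{\min}(z)\in\cA(z)$, which in the paper's logical order comes from Theorem~\ref{atoms-thm}. The inductive approach has the mild advantage of being self-contained within the ``code'' discussion and of making explicit an algorithm that reconstructs $z$ from $\hat c(z)$; your approach has the advantage of making the codomain claim $\hat c(z)\in\NN^n-\PP^n$ completely transparent.
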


\begin{proof}
This follows by induction from Corollary~\ref{icode-cor} and Proposition~\ref{icode-prop}.
\end{proof}

\begin{definition}
The \emph{shape} $\mu(z)$ of $z \in \tI_n$ is the transpose of the partition  that sorts $\hat c(z)$.
\end{definition}

The maps $\hat c : \tI_n \to \NN^n - \PP^n$ and $\mu : \tI_n \to \Par^n$ are not surjective, and it is an open problem to characterize their images.
By results in  \cite[\S4]{HMP4}, the involution shape map $\mu$ restricts to a bijection from
$I_n = \tI_n \cap S_n$ to the set of strict partitions contained in $(n-1,n-3,n-5,\dots)$.
However, $\mu(z)$ is not necessarily strict for $z \in \tI_n - I_n$.

Recall the partition $\lambda(\pi)$ given in Definition~\ref{shape-def}.
As earlier, if $\lambda \in \Par^n$ then $\lambda^* = \lambda(\pi^*)$
where $\pi \in \tS_n$ is the unique Grassmannian permutation with $\lambda =\lambda(\pi)$, 
and we set $\lambda'(\pi) = \lambda(\pi^{-1})^*$.

\begin{lemma}\label{lambda-mu-lem}
If $z \in \tI_n$ then $ \mu(z) = \lambda(\alpha_{\max}(z))$ and $\mu(z)^* = \lambda'(\alpha_{\min}(z))$.
\end{lemma}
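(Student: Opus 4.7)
The second identity $\mu(z)^* = \lambda'(\alpha_{\min}(z))$ will be proved by chasing definitions together with Lemma~\ref{desv-lem}. By construction, $\lambda'(\alpha_{\min}(z)) = \lambda(\alpha_{\min}(z)^{-1})^*$, and Definition~\ref{shape-def} says $\lambda(\alpha_{\min}(z)^{-1})$ is the transpose of the partition sorting $c(\alpha_{\min}(z))$; Lemma~\ref{desv-lem} identifies this code with $\hat c(z)$, whose sorted transpose is $\mu(z)$ by the definition of $\mu$. So $\lambda'(\alpha_{\min}(z)) = \mu(z)^*$, proving the second identity.

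For the first identity $\mu(z) = \lambda(\alpha_{\max}(z))$, my plan is to establish the key identity
\[ \alpha_{\min}(z)^{-1} = \alpha_{\max}(z)^{\circlearrowright}, \]
where $\pi^{\circlearrowright}(j) = \pi(j-1)+1$ for all $j \in \ZZ$. Taking inverses and using the easily-checked identity $(\pi^{\circlearrowright})^{-1} = (\pi^{-1})^{\circlearrowright}$, the displayed equation rearranges to $\alpha_{\min}(z) = (\alpha_{\max}(z)^{-1})^{\circlearrowright}$. A short direct computation yields $c(\pi^{\circlearrowright})_i = c(\pi)_{i-1}$ for indices taken cyclically mod $n$, hence $c(\alpha_{\min}(z))_i = c(\alpha_{\max}(z)^{-1})_{i-1}$. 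So $c(\alpha_{\max}(z)^{-1})$ and $c(\alpha_{\min}(z)) = \hat c(z)$ are cyclic rotations of one another; in particular they sort to the same partition, and taking transposes yields $\lambda(\alpha_{\max}(z)) = \mu(z)$ as claimed.

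What remains --- and the main technical obstacle --- is proving the key identity $\alpha_{\min}(z)^{-1} = \alpha_{\max}(z)^{\circlearrowright}$. I plan to do this by directly comparing the windows of both sides via the explicit formulas \eqref{amin-eq} and \eqref{amax-eq}. Recall that $\alpha_{\min}(z)^{-1}$ is obtained by interleaving pairs $(z(a_i), a_i)$ with $a_i \in [n]$ and $a_i \leq z(a_i)$, while $\alpha_{\max}(z)^{-1}$ is obtained by interleaving pairs $(b_i, z(b_i))$ with $b_i \in [n]$ and $z(b_i) \leq b_i$. One then checks by case analysis on each cycle of $z$ --- fixed points in $[n]$, 2-cycles contained in $[n]$, and 2-cycles with only one endpoint in $[n]$ --- that inverting the second window and then applying $\circlearrowright$ produces precisely the first.

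The delicate case is that of 2-cycles with an endpoint outside $[n]$: the two constructions use different representatives from $[n]$, differing by a multiple of $n$. Tracking the interplay between this multiple-of-$n$ shift, the order in which each pair is interleaved in the $[[\cdots]]$-notation, and the value-shift built into $\circlearrowright$ (which both adds $1$ to values and shifts arguments by $1$) is the combinatorial heart of the argument and what makes the bookkeeping nontrivial.
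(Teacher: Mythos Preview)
Your treatment of the second identity is fine and matches the paper's argument. The problem is with the first identity: your key claim
\[
\alpha_{\min}(z)^{-1} = \alpha_{\max}(z)^{\circlearrowright}
\]
is simply false. Take $n=5$ and $z = t_{3,7}t_{4,5} = [1,-2,7,5,4] \in \tI_5$ (the paper's running example after Lemma~\ref{desv-lem}). A direct computation from \eqref{amin-eq}--\eqref{amax-eq} gives, in standard windows,
\[
\alpha_{\min}(z)^{-1} = [-1,1,7,3,5],\qquad \alpha_{\max}(z) = [0,1,7,4,3],\qquad \alpha_{\max}(z)^{\circlearrowright} = [-1,1,2,8,5],
\]
and these last two are not equal. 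Equivalently, $c(\alpha_{\min}(z)) = \hat c(z) = (1,0,1,2,1)$ while $c(\alpha_{\max}(z)^{-1}) = (1,0,2,1,1)$; these sort to the same partition but are \emph{not} cyclic rotations of one another, so your mechanism for transporting the code cannot work in general. (The identity does happen to hold in some small cases, e.g.\ $z=t_{3,8}\in\tI_4$, which may have misled you.)

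The paper's proof does something substantially more delicate. Writing $\pi = \alpha_{\max}(z)^{-1}$, it introduces statistics $p_z(e)$ and $q_\pi(e)$ whose sorted values produce $\mu(z)$ and $\lambda(\alpha_{\max}(z))$ respectively, observes that $p_z(b_i) = q_\pi(a_i)$ on half of the relevant indices, and then reduces the equality of the remaining values (as multisets) to the existence of a certain ``special matching'' on the $2$-cycles of $z$. The existence of such a matching is then proved by a minimal-counterexample argument. There is no simple rotation relating $\alpha_{\min}(z)$ and $\alpha_{\max}(z)$ that would let you bypass this; the codes genuinely get scrambled, and only their sorted versions agree.
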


Since $*$ is involution, this implies that $ \lambda(\alpha_{\max}(z)) = \lambda(\alpha_{\min}(z)^{-1})$.

\begin{proof}
The second equality holds by Lemma~\ref{desv-lem},
so we just need to show that $ \mu(z) = \lambda(\alpha_{\max}(z))$.
Let $\pi = \alpha_{\max}(z)^{-1}$.
Let $\dots < b_{-1}<b_0< b_1 < b_2 < \dots $ be
the elements $\{b \in \ZZ : z(b) \leq b\}$ listed in order.
Assume $\{b_i : i \in \ZZ\} \cap [n] = \{ b_1,b_2,\dots,b_l\}$
where $l = \ell'(z)$,
and let $a_i = z(b_i)$.
By \eqref{amax-eq},
we have $\pi = [[b_1,a_1,b_2,a_2,\cdots, b_l,a_l]]$.
For each $e \in \ZZ$,
define 
\[p_z(e) = |\{ j \in\ZZ : j > e \geq z(j),\ z(e) > z(j)\}|
\quand
q_\pi(e) = |\{ j \in \ZZ : j > \pi^{-1}(e),\ e > \pi(j)\}|.
\]
Observe that $\mu(z)$ and $\lambda(\pi)$
are the transposes of the partitions sorting 
$p_z(1),p_z(2),\dots,p_z(n)$ and
$q_\pi(1),q_\pi(2),\dots,q_\pi(n)$, respectively.
It is straightforward to check that 
\[ \ba
p_z(b_i) = q_\pi(a_i) &= |\{ j \in \ZZ : i < j,\ a_i > a_j\}|
\ea
\]
for each $i \in [l]$ with $a_i \leq b_i$ and that 
\[\ba
p_z(a_i) &= |\{ j \in \ZZ : b_j > a_i  \geq a_j \}| =
1 + |\{ j \in \ZZ : b_j > a_i  > a_j \}|\\
 q_\pi(b_i)  &= |\{ j \in \ZZ : b_j > b_i \geq a_j \}| = 
1 + |\{ j \in \ZZ : b_j > b_i  > a_j \}|
\ea
\]
for each $i \in [l]$ with $a_i < b_i$.

Let $\cE$ be a set of pairwise disjoint 2-element subsets of $\ZZ$
with the property that $\{a,b\} \in \cE$ if and only if $\{a+n,b+n\} \in \cE$;
e.g., consider $\cE = \{ \{a,b\} : a < b = z(a)\}$.
In view of the previous paragraph,
to prove that $\mu(z) = \lambda(\pi)$,
it suffices to show that there exists a bijection
$\phi : \cE \to \cE$ 
with the following properties:
 \begin{itemize}
 \item[(i)] If $\phi(\{a,b\}) = \{a+mn,b+mn\}$ for $m \in \ZZ$
then $m=0$.
\item[(ii)] $\phi(\{a,b\}) = \{a',b'\}$ if and only if $\phi(\{a+n,b+n\}) = \{a'+n,b'+n\}$.
\item[(iii)] If $\phi(\{a,b\}) = \{a',b'\}$ where $a<b$ and $a'<b'$
then the sets
 \[R_\cE(a,b) := \{(x,y) \in \cE : x<b<y\}
\quand
L_\cE(a',b') := \{  (x,y) \in \cE : x < a'<y\}\]
have the same number of elements.
\end{itemize}
(Only the second two properties are needed in the argument that follows;
the first property is just a convenient normalization.)
 We refer to such a map $\phi$ 
 as a \emph{special matching} for $\cE$.

We argue by contradiction that a special matching exists for any choice of $\cE$.
(We suspect that this
might follow from known results, but we do not know of an 
appropriate reference, so include a self-contained proof.)
Let $\rank(\cE)$
be the number of equivalence classes in $\cE$ 
under the relation with $\{a,b\} \sim \{a',b'\}$
if and only if $a'-a = b'-b \in n\ZZ$.
 Suppose $\cE$ is of minimal rank such that no special matching exists.
No pair $\{a, b\} \in \cE$ with $a<b$ can be such that 
\[\{(x,y) \in \cE : x<a<y < b\}
=
\{  (x,y) \in \cE : a<x < b<y\}=\varnothing,\]
since then any special matching for
$\cE - \{ \{a+mn,b+mn\} : m \in \ZZ\}$ 
would uniquely extend to a special matching for $\cE$
with  $\{a,b\}$ as a fixed point.

It follows that there must exist $\{a,b\},\{a',b'\} \in \cE$
with $a<a'<b<b'$ 
such that no $\{x,y\} \in \cE$ has $a'<x<b$ or $a'<y<b$.
 First suppose $a'-a = b'-b \in n\ZZ$,
 and form $\cF$ from $\cE$ by removing 
$\{a+mn,b+mn\}$ for all $m \in \ZZ$.
A special matching for $\cF$ exists as $\rank(\cF) = \rank(\cE) - 1$,
and it is easy to see that this matching uniquely extends to a special matching on 
$\cE$ with $\{a,b\}$ as a fixed point, as in the previous paragraph.

Alternatively, assume $\{a,b\}+n\ZZ$ and $\{a',b'\}+n\ZZ$ are disjoint
and form $\cF$ from $\cE$ by removing $\{a+mn,b+mn\}$ and $\{a'+mn,b'+mn\}$ 
 and then adding $\{a+mn,b'+mn\}$ for all $m \in \ZZ$.
 A special matching $\psi$ for $\cF$ again exists by hypothesis.
 Define $\phi : \cE \to \cE$ to be the unique map
 with properties (i) and (ii) 
such that:
 \begin{itemize}
 
  \item $\phi(\{a,b\}) =  \{a'  ,b'  \}$.
  
  \item If $\psi(\{a,b'\}) = \{a,b'\}$ then $\phi(\{a',b'\}) = \{a,b\}$.
  
 \item If $\psi(\{x,y\}) = \{a,b'\}$ for $\{x,y\}\neq \{a,b'\}$ then $\phi(\{x,y\}) = \{a,b\}$.
 
 \item If $\psi(\{a,b'\}) = \{x,y\}$ for $\{x,y\}\neq \{a,b'\}$ then $\phi(\{a',b'\}) = \{x,y\}$.
 
 \item If $\{x,y\} \in \cF$ and neither $\{x,y\}$ nor $\psi(\{x,y\})$ belongs to 
$\{ \{a+mn,b'+mn\} : m \in \ZZ\},$ then $\phi(\{x,y\}) = \psi(\{x,y\})$.

\end{itemize}
We claim that $\phi$ is a special matching.
To show this, first observe that \[R_\cE(a,b) - \{\{a',b'\}\} = L_\cE(a',b') - \{ \{a,b\}\}\] by construction, so $|R_\cE(a,b)| = |L_\cE(a',b')|$ as needed.
Next, it is easy to see that $|R_\cE(a',b')| = |R_\cF(a,b')|$ and $|L_\cE(a,b)| = |L_\cF(a,b')|$.
Moreover, since we cannot have both $a<x<b$ and $a'<x<b'$
or both $a<y<b$ and $a'<y<b'$
for any $\{x,y\} \in \cE$,
it follows that $|R_\cE(x,y)| = |R_\cF(x,y)|$ and $|L_\cE(x,y)| = |L_\cF(x,y)|$ for all $\{x,y\} \in \cF - \{ \{a+mn,b'+mn\}$.
From these observations, it is straightforward to check that 
$|R_\cE(x,y)| = |L_\cE(x',y')|$ whenever $\phi(\{x,y\}) = \{x',y'\}$,
so $\phi$ is a special matching for $\cE$.
We deduce from this contradiction that 
 special matchings exist for any choice of $\cE$, which finally lets us conclude that
$\mu(z) = \lambda(\pi) = \lambda(\alpha_{\max}(z)^{-1})$.
\end{proof}

Recall the definition of the partial order $\prec_\cA$ from before Theorem~\ref{atoms-thm}.  

\begin{lemma}\label{i<-lem}
Let $\pi,\sigma \in \tS_n$. If $\pi \prec_\cA \sigma$ then $\lambda(\pi) < \lambda(\sigma)$  in dominance order and $\lambda'(\pi) <^* \lambda'(\sigma)$.
\end{lemma}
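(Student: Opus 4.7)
The plan is to reduce to the covering case of $\prec_\cA$ and analyze how the shape changes across a single covering step. By the definition of $\prec_\cA$ as a transitive closure, it suffices to prove the strict inequality $\lambda(v^{-1}) < \lambda(w^{-1})$ when the windows of $v, w \in \tS_n$ agree except in three consecutive positions $i, i+1, i+2$ where $v = [\ldots, c, a, b, \ldots]$ and $w = [\ldots, b, c, a, \ldots]$ with $a < b < c$. After possibly cyclically shifting the indexing of positions (which preserves $\lambda$ since it only permutes the entries of $c(\pi)$), we may assume $\{i, i+1, i+2\} \subset [n]$.

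The main computation is to determine how the code changes. For any $k \in [n] \setminus \{i, i+1, i+2\}$, the positions $j > k$ with $j \equiv i, i+1, i+2 \pmod n$ partition into complete triples $\{i+mn, i+1+mn, i+2+mn\}$, on each of which the values under $v$ and $w$ form the common multiset $\{a+mn, b+mn, c+mn\}$; combined with $v(j) = w(j)$ at all other positions, this yields $c_k(v) = c_k(w)$. At the three distinguished positions, a direct enumeration gives
\[
(c_i(v), c_{i+1}(v), c_{i+2}(v)) = (\alpha, \beta, \gamma) \quad\text{and}\quad (c_i(w), c_{i+1}(w), c_{i+2}(w)) = (\gamma+1,\, \alpha-1,\, \beta)
\]
for certain $\alpha, \beta, \gamma \in \NN$. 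The crucial inequality to verify is $\alpha \geq \gamma + 2$: both $v(i+1) = a$ and $v(i+2) = b$ are less than $c$ (contributing $+2$ to $\alpha$ that $\gamma$ lacks), and every translate $a + mn < b$ counted by $\gamma$ automatically satisfies $a + mn < c$ and so is also counted by $\alpha$. I expect this to be the main technical obstacle, as it requires careful accounting for the infinitely many translates of the swapped block and the handling of all four cases of positions $j \equiv i, i+1, i+2 \pmod n$ in each of $c_i(v), c_i(w), c_{i+1}(v), c_{i+1}(w), c_{i+2}(v), c_{i+2}(w)$.

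The final step is to translate this code comparison into a partition comparison. Using the identity $\lambda(\pi)_k = |\{j \in [n] : c_j(\pi^{-1}) \geq k\}|$ (which expresses $\lambda(\pi)$ as the column heights of the transpose of the sorted code), the multiset change $\{\alpha, \gamma\} \to \{\alpha - 1, \gamma + 1\}$ at the three positions, with $\beta$ and all other $c_j$ unchanged, gives
\[
\lambda(w^{-1})_k - \lambda(v^{-1})_k = \one[k = \gamma + 1] - \one[k = \alpha].
\]
Since $\gamma + 1 < \alpha$, this is precisely the standard dominance-increasing move that slides a single cell from row $\alpha$ up to row $\gamma + 1$, so the partial sums of $\lambda(w^{-1})$ strictly exceed those of $\lambda(v^{-1})$ for $k \in \{\gamma + 1, \ldots, \alpha - 1\}$ (a nonempty range by $\alpha \geq \gamma + 2$) and agree elsewhere. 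This yields $\lambda(v^{-1}) < \lambda(w^{-1})$, and the general case of the lemma then follows by transitivity of the dominance order.
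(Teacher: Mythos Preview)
Your proof is correct and follows essentially the same approach as the paper: reduce to a single $\prec_\cA$-cover, compute how the three affected code entries change, and conclude the dominance comparison. The paper phrases the last step as ``the sorted code of $\pi^{-1}$ dominates that of $\sigma^{-1}$, and transposition reverses dominance,'' whereas you compute the column heights $\lambda_k = |\{j: c_j \geq k\}|$ directly; these are equivalent. Your explicit verification that $\alpha \geq \gamma + 2$ (rather than merely $\alpha > \gamma$) is a useful sharpening, since $\alpha = \gamma + 1$ would leave the multiset $\{\alpha,\beta,\gamma\}$ unchanged and give only equality.
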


\begin{proof}
Suppose $\pi,\sigma \in \tS_n$ such that $\pi \prec_\cA \sigma$.
We may assume that $\pi^{-1}$ and $\sigma^{-1}$ have windows that are identical except 
in three consecutive positions $i-1$, $i$, and $i+1$, where 
 $\pi^{-1}= [\cdots, c,a,b, \cdots] $ and $ \sigma^{-1}=[ \cdots, b,c,a, \cdots ]$ for some integers $a<b<c$. 
 
 Then we have $c(\pi^{-1})_j=c(\sigma^{-1})_j$ for $j\notin\{ i,i-1,i+1\}+n\ZZ$ while $c(\pi^{-1})_{i-1}=c(\sigma^{-1})_{i}+1$, 
 $c(\pi^{-1})_{i}=c(\sigma^{-1})_{i+1}$,  
 $c(\pi^{-1})_{i+1}=c(\sigma^{-1})_{i-1}-1$, and 
 $c(\pi^{-1})_{i-1}>c(\pi^{-1})_{i+1}\ge c(\pi^{-1})_{i}$. It follows that the partition sorting $c(\pi^{-1})$ exceeds the partition sorting $c(\sigma^{-1})$ in dominance order.
 Since dominance order is reversed by taking transposes,
we have $\lambda(\pi)<\lambda(\sigma)$ as desired.
 
 Similarly, it holds under these conditions that  $c(\pi)_j=c(\sigma)_j$ for $j\notin\{ a,b\} + n\ZZ$ while $c(\sigma)_{a}-1=c(\pi)_a \geq  c(\pi)_{b}=c(\sigma)_{b} + 1$.
 Thus the partition sorting $c(\sigma)$ exceeds the partition sorting $c(\pi)$ in dominance order,
so we have
$\lambda(\sigma^{-1}) < \lambda(\pi^{-1})$
 which is equivalent to $\lambda'(\pi) <^* \lambda'(\sigma)$.
\end{proof}

\begin{theorem}\label{i<-thm}
Suppose $z \in \tI_n$. The following properties then hold:
\ben
\item[(a)] $\iF_z \in m_{\mu(z)} + \sum_{\nu < \mu(z)} \NN m_\nu \subset \Sym^{(n)}.$
\item[(b)] $ \iF_z \in \( F_{\mu(z)^*} + \sum_{ \mu(z)^* <^* \nu} \NN F_\nu\) \cap \( F_{\mu(z)} + \sum_{\nu < \mu(z)} \NN F_\nu\).$
\een
In both parts, the symbol $<$ denotes the dominance order on partitions.
\end{theorem}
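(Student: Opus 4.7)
The plan is to expand $\iF_z = \sum_{\pi \in \cA(z)} \tF_\pi$ by Definition~\ref{aff-def} and apply Theorems~\ref{uni-thm} and~\ref{schur-thm} termwise, using the poset structure of $(\cA(z), \prec_\cA)$ from Theorem~\ref{atoms-thm} and the identification $\mu(z) = \lambda(\alpha_{\max}(z)) = \lambda(\alpha_{\min}(z)^{-1})$ from Lemma~\ref{lambda-mu-lem} to pin down leading terms.

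For part (a) and the $F_{\mu(z)}$ half of part (b), the argument is parallel and direct: $\alpha_{\max}(z)$ is the unique $\prec_\cA$-maximum of $\cA(z)$ by Theorem~\ref{atoms-thm}, and $\lambda(\alpha_{\max}(z)) = \mu(z)$ by Lemma~\ref{lambda-mu-lem}. Lemma~\ref{i<-lem} then gives $\lambda(\pi) < \mu(z)$ for every other $\pi \in \cA(z)$. Combining with the triangular expansions $\tF_\pi \in m_{\lambda(\pi)} + \sum_{\nu < \lambda(\pi)} \NN m_\nu$ and $\tF_\pi \in F_{\lambda(\pi)} + \sum_{\nu < \lambda(\pi)} \NN F_\nu$, only $\tF_{\alpha_{\max}(z)}$ contributes the leading $m_{\mu(z)}$ (respectively $F_{\mu(z)}$) with coefficient one, and all remaining contributions lie in $\sum_{\nu < \mu(z)} \NN m_\nu$ (respectively $\sum_{\nu < \mu(z)} \NN F_\nu$).

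For the $F_{\mu(z)^*}$ half of part (b), the roles of min and max are dualized: $\alpha_{\min}(z)$ is the unique $\prec_\cA$-minimum of $\cA(z)$, and $\lambda'(\alpha_{\min}(z)) = \mu(z)^*$ by Lemma~\ref{lambda-mu-lem}. The ingredient not supplied by Lemma~\ref{i<-lem} is a dual statement: if $\pi \prec_\cA \sigma$, then $\lambda(\pi^{-1}) > \lambda(\sigma^{-1})$ in dominance, equivalently $\lambda'(\pi) <^* \lambda'(\sigma)$. Granting this dual lemma, each $\pi \in \cA(z) \setminus \{\alpha_{\min}(z)\}$ satisfies $\mu(z)^* <^* \lambda'(\pi)$, so $\tF_\pi$ contributes only $F_\nu$ with $\mu(z)^* <^* \nu$, while $\tF_{\alpha_{\min}(z)}$ produces the leading $F_{\mu(z)^*}$ with coefficient one.

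I would prove the dual lemma by imitating the explicit code computation that underlies Lemma~\ref{i<-lem}, but comparing $c(\pi)$ and $c(\sigma)$ rather than $c(\pi^{-1})$ and $c(\sigma^{-1})$. For a one-step cover with $\pi^{-1} = v = [\cdots, c, a, b, \cdots]$ and $\sigma^{-1} = w = [\cdots, b, c, a, \cdots]$ at three consecutive positions and $a < b < c$, one has $\pi(a) = i$, $\pi(b) = i{+}1$, $\pi(c) = i{-}1$ while the values at these positions for $\sigma$ are $i{+}1$, $i{-}1$, $i$. A case analysis on $j$ relative to $\{a,b,c\} + n\ZZ$ shows that $c(\sigma)$ and $c(\pi)$ agree at every residue class mod $n$ outside those of $a$ and $b$, and that
\[
c(\sigma)_{a \bmod n} = c(\pi)_{a \bmod n} + 1, \qquad c(\sigma)_{b \bmod n} = c(\pi)_{b \bmod n} - 1.
\]
A short direct count yields $c(\pi)_{a \bmod n} - c(\pi)_{b \bmod n} = |\{a < j \leq b : \pi(j) < i\}| \geq 0$, since the potential correction $|\{j > b : \pi(j) = i\}|$ vanishes: $\pi(j) = i$ forces $j \in \{a\} + n\ZZ$, but then $\pi(j) = i + mn$ is equal to $i$ only for $j = a < b$. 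This inequality implies that replacing the pair $(c(\pi)_{a \bmod n}, c(\pi)_{b \bmod n})$ by $(c(\pi)_{a \bmod n} + 1, c(\pi)_{b \bmod n} - 1)$ strictly increases the sorted partition in dominance, so taking transposes gives $\lambda(\sigma^{-1}) < \lambda(\pi^{-1})$, as required.

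The main obstacle is the case analysis in the dual lemma. Lemma~\ref{i<-lem} was short because $v$ and $w$ differed in three consecutive window positions, localizing the change in $c(v), c(w)$. In the dual, the modification $\pi \to \sigma$ affects values at three positions that may lie in distinct residue classes mod $n$, so one must verify that periodic shifts $j = a+mn$, $b+mn$, $c+mn$ with $m \neq 0$ generate no additional flips of the inversion counts; this is where the bookkeeping takes a bit of care. Once the single-step dual lemma is established, transitivity along any $\prec_\cA$-chain inside $\cA(z)$ closes the argument.
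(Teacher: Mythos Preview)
Your proposal is correct and follows the same strategy as the paper's proof, which simply cites Lemmas~\ref{lambda-mu-lem} and~\ref{i<-lem} together with Theorems~\ref{uni-thm} and~\ref{schur-thm}. You are right that the $F_{\mu(z)^*}$ half of part~(b) requires the dual statement ``$\pi \prec_\cA \sigma$ implies $\lambda(\sigma^{-1}) < \lambda(\pi^{-1})$,'' which the paper does not state separately; your explicit code computation supplying this dual lemma is sound (in particular, the key inequality $c(\pi)_{a\bmod n}\ge c(\pi)_{b\bmod n}$ follows exactly as you indicate, and the resulting transfer strictly increases the sorted code in dominance since $c(\pi)_a \ge c(\pi)_b$ rules out the multisets coinciding).
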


\begin{proof}
Given Lemmas~\ref{lambda-mu-lem} and \ref{i<-lem},
the result is immediate from Theorems~\ref{uni-thm} and \ref{schur-thm}.\end{proof}

\begin{example}
Let $z = t_{3,8} = [1,2,8,-1] \in \tS_4$ as in Example~\ref{aff-ex1}
so that  
$\alpha_{\min}(z)=[2,3,5,0]$ and $\alpha_{\max}(z)=[1,2,4,-1]^{-1}$.
Then 
$
\hat c(z)  = c(\alpha_{\min}(z))= (1,1,2,0)
$
and
$
c(\alpha_{\max}(z)^{-1}) = (1,2,0,1)
$
so  
\[
\mu(z) = \lambda(\alpha_{\max}(z))=(3,1)
\qquand
\mu(z)^*= \lambda'(\alpha_{\min}(z)) = \lambda(\alpha_{\min}(z)^{-1})^* =(3,1)^*.
\]
The Grassmannian permutation $\pi \in \tS_4$ with $\lambda(\pi) = (3,1)$ is $\pi = [-3, 3, 4, 6]^{-1}$.
Since $\pi^* = [-1, 1, 2, 8 ]^{-1}$ has shape $\lambda(\pi^*) = (1,1,1,1)$, we have
$\mu(z)^* = (1,1,1,1)$.
This agrees with Theorem~\ref{i<-thm} as $\iF_{z} =  m_{1^4} + m_{21^2} + m_{2^2} + m_{31} = \tF_{1^4} + \tF_{21^2} + \tF_{31}.$
\end{example}

Some basic questions about involution Stanley symmetric functions remain open.
It is known that $\{ \iF_z : z \in I_n\subsetneq \tI_n\}$ spans 
exactly the same vector space as the set of Schur $P$-functions $P_\mu$
indexed by strict partitions $\mu$ contained in the ``shifted staircase'' $(n-1, n-3, n-5,\dots)$  \cite[Corollary 5.22]{HMP4}.
By contrast, it is an open problem to 
identify a basis for 
$\QQ\spanning\{ \iF_z : z \in \tI_n \} \subset \Sym^{(n)}$.
Computer calculations indicate that
no subset of $\{ \iF_z : z \in \tI_n\}$ 
gives a positive basis for this space, that is, a basis in which every $\iF_z$ can be expressed with positive coefficients.
Thus, the question of how to define the ``Grassmannian'' elements of $\tI_n$
is subtler than for $\tS_n$.

We remark that the
\emph{type C Stanley symmetric functions} for signed permutations also generate the vector space spanned by the Schur $P$-functions. We do not know if there is a similar parallel between our affine involution Stanley symmetric functions and the \emph{affine type C Stanley symmetric functions} considered in \cite{LamSchillingShimozono}.

Finally, there are obvious ``left-handed'' versions of Propositions~\ref{coalg-prop} and \ref{comod-prop}.
These statements would suggest $\omega^+(\iF_z) = \sum_{\pi \in \cA(z)} F_{\pi^{-1}}$ instead of $\iF_z$
as the natural symmetric function corresponding to $z \in \tI_n$. 
Computations support the following conjecture, which implies that the choice of left- or right-handed 
convention is immaterial.

\begin{conjecture}\label{*+-conj} If $z \in \tI_n$ then $\omega^+(\iF_z) = \iF_{z}$, 
that is, $\sum_{\pi \in \cA(z)} \tF_{\pi^{-1}} = \sum_{\pi \in \cA(z)} \tF_{\pi}$.
\end{conjecture}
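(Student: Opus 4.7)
The plan is to first reformulate the conjecture in a more tractable form and then attempt an inductive proof via the affine transition formula established in Section \ref{trans-sect}.

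I would begin by reducing Conjecture \ref{*+-conj} to the equivalent identity $\iF_z = \iF_{z^*}$. Combining the theorem at the end of Section \ref{aff-sect} (giving $\omega^+(\tF_\pi) = \tF_{\pi^*}$) with the corollary $\cA(z^*) = \{\pi^* : \pi \in \cA(z)\}$, one computes
\[
\omega^+(\iF_z) = \sum_{\pi \in \cA(z)} \omega^+(\tF_\pi) = \sum_{\pi \in \cA(z)} \tF_{\pi^*} = \sum_{\sigma \in \cA(z^*)} \tF_\sigma = \iF_{z^*},
\]
so the conjecture is equivalent to $\iF_z = \iF_{z^*}$. Equivalently, expanding $\iF_z = \sum_\lambda c_\lambda \tF_\lambda$ in the affine Schur basis, the conjecture asserts $c_\lambda = c_{\lambda^*}$ for all $\lambda \in \Par^n$.

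Next I would attempt to prove $\iF_z = \iF_{z^*}$ by induction using the affine transition formula of Section \ref{trans-sect}. That formula expresses $\iF_z$ as a sum $\sum_{y} \iF_y$ over a set $T(z)$ of involutions produced from $z$ via covering transformations $\tau^n_{ij}$. The key claim would be $T(z^*) = \{y^* : y \in T(z)\}$, so that applying $*$ termwise to the transition formula for $z$ yields the transition formula for $z^*$; combined with the inductive hypothesis $\iF_y = \iF_{y^*}$ for each $y$, this gives $\iF_z = \iF_{z^*}$. Establishing the required $*$-equivariance reduces to a check on the covering transformations, something of the form $(\tau^n_{ij}(z))^* = \tau^n_{n-j,\,n-i}(z^*)$, which should follow from the explicit construction of $\tau^n_{ij}$ in \cite{M} together with the companion fact that the involution $*$ exchanges $\alpha_{\min}(z)$ and $\alpha_{\max}(z)^*$ (since $*$ reverses the partial order $\prec_\cA$ on atoms).

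The main obstacle is securing a well-founded induction. In finite type, transition formulas reduce to shorter involutions; in the affine setting, the involutions on the right side of the transition formula need not be strictly smaller in length, so a naive induction on $\ellhat(z)$ will not work. Plausible refinements are an induction on the shape $\mu(z)$ in dominance order (leveraging Theorem \ref{i<-thm}) or on the pair $(|\alpha_{\max}(z)|-|\alpha_{\min}(z)|,\ \ellhat(z))$ lexicographically. If no such induction is adequate, a back-up plan is to bypass the transition formula and construct an explicit bijection $\beta: \cA(z) \to \cA(z^*)$ with $\tF_{\beta(\pi)} = \tF_\pi$ for every $\pi \in \cA(z)$, perhaps via a promotion-type operation on reduced words; this would settle the conjecture combinatorially and would also help explain why the underlying symmetry fails for arbitrary $\tF_\pi$ (where $\tF_\pi \neq \tF_{\pi^*}$ in general) but succeeds after summing over atoms.
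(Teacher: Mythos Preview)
The statement you are trying to prove is labeled a \emph{Conjecture} in the paper, not a theorem: the authors explicitly write ``Computations support the following conjecture'' and offer no proof. So there is no argument in the paper to compare your proposal against; you are attempting to settle an open problem.

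Your reformulation $\omega^+(\iF_z)=\iF_{z^*}$ is correct, but note that it is already the content of the Corollary immediately preceding the conjecture (which states $\iF_{z^*}=\omega^+(\iF_z)$), so this step does not buy anything new. The real content of the conjecture is exactly the identity $\iF_z=\iF_{z^*}$.

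Your proposed inductive argument via the transition formula has a genuine gap that you yourself identify but do not resolve. Theorem~\ref{main-thm} equates two sums $\sum_{z\in\iPhi_p^-(y)}\iF_z=\sum_{z\in\iPhi_q^+(y)}\iF_z$ where \emph{every} $z$ appearing on either side satisfies $y\lessdot_I z$, i.e.\ $\ellhat(z)=\ellhat(y)+1$. So the formula does not express $\iF_z$ in terms of strictly smaller involutions; it relates sums over different sets of covers of a fixed $y$. Even granting the $*$-equivariance $(\tau^n_{ij}(y))^*=\tau^n_{n-j,n-i}(y^*)$ (which is plausible and checkable from Definition~\ref{tau-def}), applying $*$ to both sides of Theorem~\ref{main-thm} just yields another instance of Theorem~\ref{main-thm}; it does not let you peel off a single $\iF_z$ and reduce to shorter ones. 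Your suggested refinements (induction on $\mu(z)$ in dominance order, or on some lexicographic statistic) are speculative: nothing in the paper guarantees that iterating the transition formula terminates at a base case where $\iF_z=\iF_{z^*}$ is known, and indeed the authors remark in \cite{LamShim} that even for ordinary affine Stanley symmetric functions this iteration strategy has not been made to work. Your backup plan of an explicit bijection $\cA(z)\to\cA(z^*)$ preserving $\tF_\pi$ is a reasonable direction but is itself a conjecture, not a proof.
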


By Corollary~\ref{aut-cor}, this conjecture is equivalent to the claim that $\iF_z  =\iF_{z^*}$ for all $z \in \tI_n$.

\section{Transition formulas}\label{trans-sect}

Given two elements $\pi,\sigma \in \tS_n$,
write $\pi \lessdot \sigma$ if $\ell(\sigma) = \ell(\pi)+1$ and $\sigma = \pi t_{ij}$ 
for some $i<j \not \equiv i \modu n)$.
The transitive closure of $\lessdot$, denoted $\leq$, is the \emph{(strong) Bruhat order} on $\tS_n$.
The relation $\pi \lessdot \pi t_{ij}$ is equivalent to
the following more explicit condition:

\begin{lemma}[{\cite[Proposition 8.3.6]{BB}}]\label{lem-2}
Fix $\pi \in \tS_n$ and integers $i < j\not \equiv i \modu n)$.
One has $\pi \lessdot \pi t_{ij}$ if and only if  $\pi(i) < \pi(j)$ and no integer $e \in \ZZ$ has $i<e<j$ and $\pi(i) < \pi(e) < \pi(j)$.
\end{lemma}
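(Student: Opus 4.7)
My plan is to prove this by directly computing $\ell(\sigma) - \ell(\pi)$ for $\sigma := \pi t_{ij}$, via enumeration of the equivalence classes in $\Inv(\pi) \triangle \Inv(\sigma)$ modulo the translation $(a, b) \sim (a + n, b + n)$. Since $t_{ij}$ acts as the identity outside $\{i, j\} + n\ZZ$, every contributing class admits a representative $(a, b)$ with $a$ or $b$ in $\{i, j\}$. I expect these representatives to come in four kinds: the diagonal $(i, j)$; generic interior pairs $(i, e)$ or $(e, j)$ with $e \in (i, j)$ and $e \not\equiv i, j \pmod n$; special interior pairs, one for each $k \geq 1$ with $kn < j - i$, where the pairs $(i + kn, j)$ and $(i, j - kn)$ merge into a single equivalence class; and wrap classes $(i, j + kn)$ for $k \geq 1$ with $kn > j - i$, each paired with a ``wrap-loss'' class $(j, i + kn)$ of opposite sign.

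The next step is a case analysis on inversion status (comparing $\pi(e)$ to $\pi(i)$ and $\pi(j)$, and using $\sigma(i) = \pi(j)$ and $\sigma(j) = \pi(i)$). Under the assumption $\pi(i) < \pi(j)$, the diagonal class contributes $+1$; each generic $e \in (i, j)$ contributes $+2$ exactly when $\pi(e) \in (\pi(i), \pi(j))$ and $0$ otherwise; each special merged pair contributes $+1$ exactly when $\pi(e) \in (\pi(i), \pi(j))$ (equivalently, $kn < \pi(j) - \pi(i)$); and each wrap gain is canceled by its matched wrap loss, giving net $0$. Summing yields
\[
\ell(\sigma) - \ell(\pi) = 1 + 2G + 2S,
\]
where $G$ is the number of generic $e \in (i, j)$ with $\pi(e) \in (\pi(i), \pi(j))$ and $S$ is the number of $k \geq 1$ with $kn < \min(j - i, \pi(j) - \pi(i))$. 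Writing $N$ for the total count of integers $e$ satisfying $i < e < j$ and $\pi(i) < \pi(e) < \pi(j)$, one has $N = G + 2S$, since each special $k$ contributes the two integers $i + kn$ and $j - kn$ to this count. Therefore $\ell(\sigma) - \ell(\pi) = 1 + 2(G + S) \geq 1$, with equality iff $G = S = 0$, iff $N = 0$. The symmetric calculation when $\pi(i) > \pi(j)$ will yield $\ell(\sigma) < \ell(\pi)$; combining the two cases gives the lemma.

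The main obstacle is the combinatorial bookkeeping for the ``special'' and ``wrap'' families when $j - i \geq n$: several pairs that would be distinct in the finite symmetric group become equivalent modulo the $(n, n)$-translation, and some flip between inversion gains and inversion losses. Verifying the exact cancellation $(+1) + (-1) = 0$ for wrap pairs and the single-class merging of special interior pairs will be the technical crux. An alternative approach is to invoke the abstract Coxeter-theoretic characterization of Bruhat covers by reflections (e.g., via the subword/root-system route used in Bj\"orner--Brenti) and then translate the resulting root condition into the explicit affine-permutation formulation above, which is the route taken in the cited reference.
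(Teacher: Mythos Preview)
The paper does not prove this lemma at all; it simply quotes it from Bj\"orner--Brenti. Your proposal therefore goes well beyond what the paper does, and your overall strategy---computing $\ell(\pi t_{ij})-\ell(\pi)$ by enumerating equivalence classes in $\Inv(\pi)\triangle\Inv(\pi t_{ij})$---is sound and does lead to the correct formula $\ell(\sigma)-\ell(\pi)=1+2G+2S$ and the correct conclusion.

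There is, however, a bookkeeping gap between your verbal accounting and that formula. As you describe it, the diagonal gives $+1$, the $G$ generic interior $e$'s give $+2G$, the $S$ special merged classes give $+1$ each (total $+S$), and the wraps ``cancel, giving net $0$''. That sums to $1+2G+S$, not $1+2G+2S$. The missing $+S$ comes from the classes $(i,j+kn)$ with $0<kn<j-i$: each contributes $+1$ whenever $kn<\pi(j)-\pi(i)$, hence $S$ of them contribute, but your wrap category explicitly restricts to $kn>j-i$, so they appear nowhere in your list. Equivalently, your cancellation $(i,j+kn)\leftrightarrow(j,i+kn)$ only works when $i+kn>j$; for $kn<j-i$ the would-be partner $(j,i+kn)$ is not a pair with $j$ on the left, so the gain from $(i,j+kn)$ is unmatched. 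Once you include this family, the count becomes $1+2G+S+S=1+2G+2S$ and your displayed formula follows. You should also record (easy, but not mentioned) that the exterior pairs $(e,i),(e,j)$ with $e<i$ and $(i,e),(j,e)$ with $e>j$ and $e\not\equiv i,j\ (\mathrm{mod}\ n)$ cancel in $\pm 1$ pairs, so they contribute nothing; without this your enumeration of ``contributing classes'' is incomplete.
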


For $\pi \in \tS_n$ and $ r \in \ZZ$ define the sets
\be\label{phi-sets-eq}
\ba
 \Psi^-_r(\pi) &= \{ \sigma \in\tS_n : \pi \lessdot \sigma = \pi t_{ir}\text{ for some integer }i < r\text{ with }i\notin r + n\ZZ\},
\\
\Psi^+_r(\pi) &= \{ \sigma\in\tS_n : \pi \lessdot \sigma= \pi t_{rj}\text{ for some integer }j > r\text{ with }j\notin r + n\ZZ\}
.
\ea
\ee
Lam and Shimozono \cite{LamShim} proved the following \emph{transition formula} for $\tF_\pi$:

\begin{theorem}[{\cite[Theorem 7]{LamShim}}]\label{trans-thm}
If $\pi \in \tS_n$ and $r \in \ZZ$ then
$\label{trans-eq} \sum_{\sigma \in \Psi^-_r(\pi)} \tF_\sigma = \sum_{\sigma \in \Psi^+_r(\pi)} \tF_\sigma$.
\end{theorem}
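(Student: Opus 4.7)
The plan is to prove this Lam--Shimozono identity by transferring a Monk-style relation in the group algebra $\QQ\tS_n$ across the coalgebra morphism $\pi \mapsto \tF_\pi$ established in Proposition~\ref{abs-prop}. Both sides of the identity should arise as specializations of a single algebraic expression encoding those Bruhat covers of $\pi$ whose labeling reflection moves position $r$.

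First, I would examine the formal signed sum
\[ \xi_r \;:=\; \sum_{\substack{i < r \\ i\not\equiv r \bmod n}} t_{ir} \;-\; \sum_{\substack{j > r \\ j \not\equiv r \bmod n}} t_{rj}, \]
viewed inside a completion of $\QQ\tS_n$, and analyze the product $\pi \cdot \xi_r$. By Lemma~\ref{lem-2}, the length-additive terms contribute exactly $\sum_{\sigma \in \Psi^-_r(\pi)} \sigma - \sum_{\sigma \in \Psi^+_r(\pi)} \sigma$, while the length-decreasing terms should cancel in pairs by a direct Bruhat-order argument using that $t_{ir}$ and $t_{rj}$ are both reflections involving $r$.

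Next, I would show that $\fk F$ (the map $\pi \mapsto \tF_\pi$) annihilates this signed sum, by constructing a weight-preserving bijection on cyclically decreasing factorizations from Definition~\ref{lam-def}. Given a factorization of $\sigma \in \Psi^-_r(\pi)$ of the prescribed form, one would identify the unique cyclically decreasing factor in which the transposition $t_{ir}$ "participates" and perform a local exchange replacing it with a corresponding factorization of some $\sigma' \in \Psi^+_r(\pi)$ of the same monomial weight. The cyclically decreasing condition, together with the reflection-length analysis from the first step, should force this local exchange to be an involution pairing the two sides. An alternative strategy is to deduce the identity from an affine Pieri rule for multiplication of $\tF_\pi$ by an appropriate symmetric function, and extract the transition identity by comparing coefficients.

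The main obstacle is the "wrap-around" behavior of cyclically decreasing factors in the affine setting. In the finite Lascoux--Sch\"utzenberger case one may induct on the maximal descent position, which collapses one side of the analogous identity to a single term, but no such reduction is available here since the affine transition is intrinsically two-sided. The delicate part of the argument is tracking how the critical reflection passes through several consecutive cyclically decreasing factors when the indices $i$ or $j$ in $t_{ir}$ or $t_{rj}$ cross the residue class of $r$ modulo $n$. Verifying that the local exchange respects both the cyclic decreasing condition and the length-additivity of the factorization, uniformly over all such boundary cases, is where I expect the proof to demand the most careful case analysis.
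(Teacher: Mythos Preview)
This theorem is not proved in the present paper: it is quoted from \cite[Theorem 7]{LamShim} and used as a black box in the proof of Theorem~\ref{main-thm}. There is therefore no ``paper's own proof'' to compare your proposal against.

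That said, your sketch has a substantive gap in its first step. The element $\xi_r$ is a formal infinite sum, and the product $\pi\cdot\xi_r$ contains, for each reflection $t_{ir}$ or $t_{rj}$, a term $\pi t$ whose length differs from $\ell(\pi)$ by some odd integer, not necessarily $\pm 1$. Lemma~\ref{lem-2} characterizes only the covers $\pi\lessdot\pi t$; the infinitely many terms with $\ell(\pi t)-\ell(\pi)\in\{3,5,\dots\}$ or $\in\{-3,-5,\dots\}$ are neither in $\Psi^\pm_r(\pi)$ nor ``length-decreasing'' in the sense you describe, and you give no mechanism by which they cancel in pairs. So even after passing to a suitable completion, $\pi\cdot\xi_r$ does not reduce to the finite signed sum $\sum_{\sigma\in\Psi^-_r(\pi)}\sigma-\sum_{\sigma\in\Psi^+_r(\pi)}\sigma$ that you want to feed into $\fk F$.

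For comparison, Lam and Shimozono's actual proof in \cite{LamShim} avoids this issue entirely: they work directly with reduced words and construct an affine version of Little's bumping algorithm. Given a reduced word for some $\sigma\in\Psi^-_r(\pi)$, one marks the position corresponding to the extra reflection and ``bumps'' it through the word until one obtains a reduced word for some $\sigma'\in\Psi^+_r(\pi)$; the process is an involution and preserves the descent composition, hence the monomial weight in Definition~\ref{lam-def}. Your second paragraph gestures toward a bijection of this flavor, but phrased on cyclically decreasing factorizations rather than on reduced words; that reformulation is strictly harder, since the critical reflection can straddle several consecutive factors. If you want to reconstruct the Lam--Shimozono argument, it is cleaner to work with marked reduced words directly and drop the group-algebra step altogether.
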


This result is an affine generalization of
the transition formula of Lascoux and Sch\"utzenberger \cite{Lascoux} for Schubert polynomials.
Lam and Shimozono originally hoped to use such identities to give a direct, algebraic proof of Theorem~\ref{+-thm}, 
but an argument along these lines remains to be found \cite[\S3.3]{LamShim}. Their transition formula has found other applications, however  \cite{Paw}.

\begin{example}
Suppose $n = 4$ and $\pi = [1,0,2,7] \in \tS_4$. 
 Setting $r=3$, we have 
\[ \ba 
\Psi^-_3(\pi) &= \{[2, 0, 1, 7], [1, 2, 0, 7]\} =\{ \pi t_{i,3} : i \in \{1,2\} \} ,  \\
\Psi^+_3(\pi) &= \{[1, 0, 7, 2], [-2, 0, 5, 7],  [1, -2, 4, 7]\} = \{ \pi t_{3,j} : j \in \{4,5,6\} \},
\ea
\]
and $\tF_{[2, 0, 1, 7]} + \tF_{[1, 2, 0, 7]}  = \tF_{[1, 0, 7, 2]}  +  \tF_{[-2, 0, 5, 7]} + \tF_{[1, -2, 4, 7]} 
=
\tF_{21^3} + \tF_{2^21} + \tF_{31^2} + \tF_{32}$.
\end{example}

The goal of this section is to prove an analogue of Theorem~\ref{trans-thm} for (affine) involution Stanley symmetric functions.
Continue to let $<$ denote the Bruhat order on $\tS_n$ and
write $\lessdot_I$ for the covering relation of $<$ restricted to $\tI_n$,
so that $y\lessdot_I z$ for $y,z \in \tI_n$ if and only if $\{ \pi \in \tI_n :  y \leq \pi <z \}  = \{y\}$.
For each pair of integers $ i<j\not\equiv i \modu n)$, there are associated operators
\[ \tau^n_{ij} : \tI_n \to \tI_n\]
that will play the role of multiplication by a reflection in the poset $(\tI_n,<)$.
Just as $\pi \lessdot \sigma$ only if $\sigma = \pi t_{ij}$ for some $i,j$,
it will hold that $y \lessdot_I z$ only if $z = \tau^n_{ij}(y)$ for some $i,j$.
The description of the maps $\tau^n_{ij}$
requires some auxiliary terminology.

Fix an involution $y \in \tI_n$ and integers $i < j \not \equiv i \modu n)$.
Define $\cG_{ij}(y)$ to be the graph 
with vertex set $ \{i,j,y(i),y(j)\}$
and edge set
$ \{ \{ i, y(i)\}, \{j, y(j)\} \} \setminus \{ \{i\}, \{j\}\},$
in which the vertices $i$ and $j$ are colored white and 
all other vertices are colored black.
Let $\sim$ be the equivalence relation on vertex-colored graphs with integer vertices
 in which $\cG \sim \cH$ if and only there exists a graph isomorphism $\cG \to \cH$
 that is order-preserving on vertex sets.
Finally, writing  $m \in \{2,3,4\}$ for the size of $\{  i, j, y(i), y(j) \}$, define
 $\cD_{ij}(y)$ to be 
the
unique 
vertex-colored graph on $[m]$  satisfying $\cD_{ij}(y) \sim \cG_{ij}(y)$.

There are twenty possibilities for $\cD_{ij}(y)$, which we draw by arranging the vertices  in order from left to right,
using $\circ$ and $\bullet$ for the white and black vertices.
For example,
if $y, z \in \tI_n$ are involutions such that $y(i) < j =y(j) < i$ and $i < z(j) < j < z(i)$, then 
\[ \cD_{ij}(y) = \diagramcBA \qquand \cD_{ij}(z) = \diagramDcBa. \]
Observe that if $i \equiv y(j) \modu n)$ then $\cD_{ij}(y)$ must be either 
\[
\diagrambADc \qquord 
\diagramBadC \qquord
\diagramCdaB\qquord
\diagramcDAb.
\]
In all other cases, $i \not\equiv y(j) \modu n)$.
The following slightly rephrases \cite[Definition 8.6]{M}:

\begin{definition}\label{tau-def}
Fix $y \in \tI_n$ and $i,j \in \ZZ$  with $i < j \not \equiv i \modu n)$. 
We write 
\[
t_{ii} = t_{jj} = 1,
\qquad
 (\circ, \circ) = t_{ij},
 \qquad
(\circ,\bullet) = t_{i,y(j)},
\qquand
 (\bullet,\circ) = t_{y(i),j}.
 \]
Let
 $ \overline y \in \tI_n$ be the affine permutation fixing each integer in the set $ \{i,j,y(i),y(j)\} + n \ZZ$
 and acting on all other integers as $k\mapsto y(k)$.
Finally, define $\tau^n_{ij}(y) \in \tI_n$ by
\[\tau^n_{ij}(y) =\begin{cases}
 (\circ, \circ)\cdot y\cdot (\circ, \circ) & \text{if $\cD_{ij}(y) $ is 
 $\diagrambADc$ or
 $\diagramCDab$ or 
 $\diagramcdAB$}
  \\
(\circ,\bullet)\cdot y\cdot (\circ,\bullet)  & \text{if $\cD_{ij}(y)$ is $\diagramCdaB$ and $i \not \equiv y(j) \modu n)$}
   \\
(\circ, \circ)\cdot \overline y  & \text{if $\cD_{ij}(y) $ is $\diagramCdaB$ and $i \equiv y(j) \modu n)$} 
\\
(\circ,\circ)\cdot  \overline y &\text{if $\cD_{ij}(y)$ is 
$\diagramAB$ or
$\diagramAcB$ or
$\diagramBaC$
or
$\diagramBadC$
}
\\
 (\circ,\bullet) \cdot \overline y&\text{if $\cD_{ij}(y)$ is  $\diagramACb$ or  $\diagramBaDc$}
\\
(\bullet,\circ)  \cdot \overline y&\text{if $\cD_{ij}(y)$ is $\diagrambAC$ or $\diagrambAdC$}
\\
y&\text{otherwise}.
\end{cases}
\]
\end{definition}

\begin{example}
Let $n=8$. Some examples of $\tau^n_{ij}(y)\neq y$ when $i\not \equiv y(j) \modu n)$:
\begin{itemize}
\item If $y = t_{1,4}t_{5,7}$ then $\tau^n_{4,5}(y) = t_{1,5}t_{4,7}$
and $\tau^n_{1,7}(y) = \tau^n_{1,5}(y) = \tau^n_{4,7}(y)= t_{1,7}$.

\item
If $y =  t_{1,5}t_{4,7}$ then $\tau^n_{1,7} (y)= t_{1,7}t_{4,5}$
and $\tau^n_{2,3}(y) = t_{1,5}t_{2,3}t_{4,7}$.

\end{itemize}
Some examples of $\tau^n_{ij}(y)\neq y$ when $i \equiv y(j) \modu n)$:
\begin{itemize}
\item
If $y = t_{1,8}$ then $\tau^n_{8,9}(y) = t_{8,17}$ and $\tau^n_{1,16}(y) = t_{1,16}$.

\item
If $y = t_{1,10}$ then $\tau^n_{1,18}(y) = t_{1,18}$.
\end{itemize}
\end{example}

The operators $\tau_{ij}^n$  are affine analogues of the ``covering transformations'' studied in \cite{HMP3,Incitti1}.
They are related to the Bruhat order on affine involutions by the following theorem.

\begin{theorem}[\cite{M}]\label{tau-thm2}
Suppose $y, z \in \tI_n$. The following properties are equivalent:
\ben
\item[(a)] It holds that $y\lessdot_I z$.
\item[(b)] For each $\sigma \in \cA(z)$, there exists an atom $\pi\in \cA(y)$ with $\pi \lessdot \sigma$.
\item[(c)] One has $\ellhat(z) =\ellhat(y) + 1$ and $z = \tau^n_{ij}(y)$ for some $i< j\not\equiv i \modu n)$.
\een
\end{theorem}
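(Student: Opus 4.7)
The plan is to establish the cycle of implications (c) $\Rightarrow$ (a) $\Rightarrow$ (b) $\Rightarrow$ (c). To show (c) $\Rightarrow$ (a), I would work through each of the seven configurations of $\cD_{ij}(y)$ in Definition~\ref{tau-def} for which $\tau^n_{ij}(y) \neq y$. In each case, $z := \tau^n_{ij}(y)$ is obtained from $y$ by conjugation or left-multiplication by a specified product of reflections that raises length, yielding $y < z$ in the standard affine Bruhat order directly from Lemma~\ref{lem-2}. The hypothesis $\ellhat(z) = \ellhat(y) + 1$ then forces $y \lessdot_I z$: any intermediate involution $w$ with $y < w < z$ would satisfy $\ellhat(y) < \ellhat(w) < \ellhat(z)$, contradicting that these integer values differ by one.

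For (a) $\Rightarrow$ (b), given $y \lessdot_I z$ and $\sigma \in \cA(z)$, Theorem~\ref{atoms-thm} places $\sigma$ in the $\prec_\cA$-interval between $\alpha_{\min}(z)$ and $\alpha_{\max}(z)$. I would identify the window positions of $\sigma$ corresponding to the indices in $\{i, j, y(i), y(j)\} + n\ZZ$, where the operator $\tau^n_{ij}$ realizing the cover acts, and modify $\sigma$ there by the specific transposition indicated by the relevant case of Definition~\ref{tau-def} to produce a permutation $\pi \in \tS_n$ with $\pi \lessdot \sigma$. One then verifies via Theorem~\ref{local-thm} that $\pi \in \cA(y)$. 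Conversely, for (b) $\Rightarrow$ (c), any atom-level cover $\pi \lessdot \sigma$ has $\sigma = \pi t_{ab}$ for some $a < b$ with $a \not\equiv b \pmod n$, and one reads off from Definition~\ref{tau-def} --- by tracing through the configuration of $\cD_{ij}(y)$ determined by the pair $(i,j)$ with $\{i,j\} = \{\pi(a), \pi(b)\}$ (normalized) --- that $z = \tau^n_{ij}(y)$. The length identity $\ellhat(z) = \ell(\sigma) = \ell(\pi) + 1 = \ellhat(y) + 1$ is then immediate.

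The main obstacle will be the (a) $\Rightarrow$ (b) step, which requires constructing an atom $\pi$ of $y$ directly below \emph{every} given atom $\sigma$ of $z$ in strong Bruhat order. Unlike the finite situation treated in \cite{HMP3}, one cannot induct on the group order, so the modification argument must be verified uniformly across all twenty combinatorial types of $\cD_{ij}(y)$ and for $\sigma$ ranging over the potentially large $\prec_\cA$-interval furnished by Theorem~\ref{atoms-thm}. Tracking how the local window perturbation interacts with the three conditions of Theorem~\ref{local-thm}, so as to certify that the modified $\pi$ remains an atom of $y$, is the most delicate part; a secondary obstacle is the case in Definition~\ref{tau-def} where $i \equiv y(j) \pmod n$, in which the cover wraps around the affine window and the relevant transposition must be chosen with care.
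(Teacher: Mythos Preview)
The paper does not prove this theorem itself: its proof consists entirely of citations, deferring (a)$\Leftrightarrow$(b) to Hultman's general results on twisted involutions \cite{H1,H2} (as packaged in \cite[Proposition 8.1 and Lemma 8.2]{M}) and (a)$\Leftrightarrow$(c) to \cite[Corollary 8.12]{M}. Your proposed cycle (c)$\Rightarrow$(a)$\Rightarrow$(b)$\Rightarrow$(c) would therefore amount to reproving those external results from scratch rather than filling in an argument the present paper omits.

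Beyond that difference in scope, the sketch has concrete gaps. In (c)$\Rightarrow$(a), the step ``any intermediate involution $w$ with $y<w<z$ would satisfy $\ellhat(y)<\ellhat(w)<\ellhat(z)$'' presupposes that $(\tI_n,<)$ is graded by $\ellhat$; that is precisely Hultman's theorem and does not follow from Lemma~\ref{lem-2}, which only characterizes covers $\pi\lessdot\pi t_{ij}$ for ordinary affine permutations, whereas $z=\tau^n_{ij}(y)$ is in general \emph{not} of the form $y\cdot t$ (the paper itself cites \cite[Lemma 8.8]{M} just for the inequality $y\leq\tau^n_{ij}(y)$). In (b)$\Rightarrow$(c), the inference that $\pi\in\cA(y)$ and $\pi t_{ab}\in\cA(z)$ force $z=\tau^n_{ab}(y)$ is exactly \cite[Theorem 8.10]{M}, i.e.\ the ``only if'' half of Theorem~\ref{tau-thm1} here, and is not something one simply reads off Definition~\ref{tau-def}; also, your choice $\{i,j\}=\{\pi(a),\pi(b)\}$ confuses positions with values --- one should take $(i,j)=(a,b)$. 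Finally, the (a)$\Rightarrow$(b) step you correctly flag as the main obstacle is Hultman's lifting property for twisted involutions; a direct window-modification argument valid for \emph{every} $\sigma\in\cA(z)$ would be at least as intricate as the case analysis carried out in Section~\ref{tau-sect1} for the converse direction.
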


\begin{proof}
The equivalence of (a) and (b) holds by results in \cite{H1,H2}; see \cite[Proposition 8.1 and Lemma 8.2]{M}.
The equivalence of (a) of (c) holds by \cite[Corollary 8.12]{M}.
\end{proof}

One always has $y\leq \tau^n_{ij}(y)$ \cite[Lemma 8.8]{M}, but determining if $y \lessdot_I \tau^n_{ij}(y)$
can be complicated; see \cite[Proposition 8.9]{M}.
The following is often useful for this purpose:

 \begin{lemma}[\cite{M}]
 \label{length-lem}
 Suppose $y \in \tI_n$ and $i < j \not \equiv i \modu n)$ are such that $y \neq \tau^n_{ij}(y)$.
 Assume $i \not \equiv y (j) \modu n)$ and either $y(i) \leq i$ or $j \leq y(j)$.
 Then $y \lessdot_I \tau^n_{ij}(y)$ if and only if $y \lessdot yt_{ij}$.
\end{lemma}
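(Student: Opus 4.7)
The plan is a case analysis on the graph $\cD_{ij}(y)$, using the explicit description of $\tau^n_{ij}(y)$ in Definition~\ref{tau-def}. First I would observe that the hypothesis $i \not\equiv y(j) \modu n)$ excludes the branch of the definition with $\cD_{ij}(y) = \diagramCdaB$ and $i \equiv y(j) \modu n)$. Among the remaining branches that give $\tau^n_{ij}(y) \neq y$, a direct inspection of the diagrams shows that the only two satisfying both $i<y(i)$ and $y(j)<j$ are $\diagramBadC$ and the non-wrapped $\diagramCdaB$; the second hypothesis rules these out. Combined with the assumption $y \neq \tau^n_{ij}(y)$, this reduces the proof to ten possible cases for $\cD_{ij}(y)$, namely $\diagramAB$, $\diagramAcB$, $\diagramBaC$, $\diagrambAC$, $\diagramACb$, $\diagrambADc$, $\diagramCDab$, $\diagramcdAB$, $\diagramBaDc$, and $\diagrambAdC$.

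In each of these cases the local structure at the vertices $i,j,y(i),y(j)$ already forces $y(i)<y(j)$, so by Lemma~\ref{lem-2} the relation $y \lessdot y t_{ij}$ reduces to the condition that no integer $e$ with $i<e<j$ satisfies $y(i)<y(e)<y(j)$. On the other side, since $y \leq \tau^n_{ij}(y)$ holds unconditionally, Theorem~\ref{tau-thm2}(c) identifies $y \lessdot_I \tau^n_{ij}(y)$ with the single equation $\ellhat(\tau^n_{ij}(y)) = \ellhat(y)+1$. I would then verify the equivalence of these two conditions case by case. For the conjugation cases $\diagrambADc$, $\diagramCDab$, $\diagramcdAB$, where $\tau^n_{ij}(y) = t_{ij}\, y\, t_{ij}$, the quantity $\ell'$ is preserved, so the change in $\ellhat$ counts precisely the inversions of $y$ rearranged by conjugation over the intermediate integers $e \in (i,j)$. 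For the remaining cases, where $\tau^n_{ij}(y) = t_{kl}\,\overline y$ for some transposition $t_{kl}$, both $\ell$ and $\ell'$ contributions must be tracked: passage from $y$ to $\overline y$ deletes the one or two arcs indicated in the diagram, changing $\ell$ and $\ell'$ by amounts depending only on the diagram, and multiplication by $t_{kl}$ then introduces new inversions and either merges or splits a pair of orbits on $\ZZ/n\ZZ$.

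The main obstacle is the bookkeeping across the ten cases, in particular carefully balancing the contributions to $\ell$ (counted as equivalence classes of inversions under the cyclic shift $(a,b)\sim(a+n,b+n)$) and to $\ell'$ (the number of orbits on $\ZZ/n\ZZ$) in the $\overline y$-type cases, where the interplay of arc deletion and post-multiplication by a transposition is the subtle point. Once this setup is in place, each case reduces to a short computation verifying that $\ellhat(\tau^n_{ij}(y)) - \ellhat(y) = 1$ precisely when no $e \in (i,j)$ satisfies $y(i)<y(e)<y(j)$, yielding the desired equivalence.
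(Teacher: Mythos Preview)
The paper does not give a self-contained proof of this lemma: its entire argument is the one-line citation ``This is the first half of \cite[Proposition~8.9(a)]{M}.'' Your proposal, by contrast, outlines a direct proof by case analysis on $\cD_{ij}(y)$, which is essentially the route taken in the cited reference~\cite{M} itself. So relative to the present paper your approach is different (a proof rather than a pointer), while relative to the underlying literature it is the expected one.

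Your reduction to ten diagrams is correct: the hypothesis $i\not\equiv y(j)\modu n)$ eliminates the wrapped $\diagramCdaB$ branch, and among the remaining cases with $\tau^n_{ij}(y)\neq y$ the only two with both $i<y(i)$ and $y(j)<j$ are indeed $\diagramBadC$ and the unwrapped $\diagramCdaB$, which the disjunction ``$y(i)\leq i$ or $j\leq y(j)$'' excludes. The observation that $y(i)<y(j)$ holds in all ten surviving cases is also correct, so via Lemma~\ref{lem-2} the condition $y\lessdot yt_{ij}$ becomes exactly the absence of an $e\in(i,j)$ with $y(i)<y(e)<y(j)$, and Theorem~\ref{tau-thm2}(c) reduces the other side to $\ellhat(\tau^n_{ij}(y))=\ellhat(y)+1$.

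Where your outline stops is where the substance lies. The assertion that ``each case reduces to a short computation'' is optimistic. In the conjugation cases one must show that $\ell(t_{ij}\,y\,t_{ij})-\ell(y)=2$ is equivalent to $y\lessdot yt_{ij}$, and this is not immediate: both the left and right multiplications by $t_{ij}$ contribute to the inversion count, and their interaction requires tracking not just integers $e\in(i,j)$ with $y(e)\in(y(i),y(j))$ but also how such $e$ pair up under $y$. In the $\overline y$-type cases the accounting is similarly delicate, since deleting arcs and then reinserting a transposition can shift $\ell$ and $\ell'$ in partially cancelling ways. None of this is insurmountable, and \cite{M} carries it out, but what you have written is a correct identification of the case structure and the quantities to be compared, not yet a proof.
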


\begin{proof}
This is the first half of \cite[Proposition 8.9(a)]{M}.
\end{proof}

The proof of our transition formula for $\iF_y$ relies on two technical results,
the first of which is the following theorem.

\begin{theorem}[Covering property] \label{tau-thm1}
Suppose $y,z \in \tI_n$ and $\pi \in \cA(y)$. Fix integers  $i<j \not \equiv i \modu n)$
 such that $\pi \lessdot \pi t_{ij}$.
Then $\pi t_{ij} \in \cA(z)$
if and only if $z =\tau^n_{ij}(y) \neq y$.
\end{theorem}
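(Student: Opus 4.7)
The plan is to show that the theorem reduces to the single key equality
\[
w \;:=\; (\pi t_{ij})^{-1} \circ (\pi t_{ij}) \;=\; \begin{cases} \tau^n_{ij}(y) & \text{if } \tau^n_{ij}(y) \neq y, \\ y & \text{otherwise.} \end{cases}
\]
To see why: since $\ell(\pi t_{ij}) = \ell(\pi)+1 = \ellhat(y)+1$, the element $\pi t_{ij}$ always belongs to $\HA(w)$, and it lies in $\cA(w)$ iff $\ellhat(w)=\ellhat(y)+1$. In the first case of the key equality, $\ellhat(\tau^n_{ij}(y)) = \ellhat(y)+1$ by Theorem~\ref{tau-thm2}(c); in the second, $\ellhat(w)=\ellhat(y)$ and $\pi t_{ij}$ is a non-minimal Hecke atom of $y$. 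Hence once the key equality is known, $\pi t_{ij} \in \cA(z)$ iff $z = w = \tau^n_{ij}(y) \neq y$, which is exactly the theorem.

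To prove the key equality, I would combine the local characterization of atoms in Theorem~\ref{local-thm} with the description of Bruhat covers in Lemma~\ref{lem-2}. The membership $\pi \in \cA(y)$ rigidly determines how $\pi$ behaves at the integers $i, j, y(i), y(j)$ and their neighbors, while $\pi \lessdot \pi t_{ij}$ forbids any $e \in (i,j)$ with $\pi(i) < \pi(e) < \pi(j)$. Corollary~\ref{local-cor} then lets us restrict the computation of $w$ to the finite support generated by $\{i, j, y(i), y(j)\}$ modulo $n$, reducing the problem to a calculation inside some finite symmetric group $S_m$ where both $y$ and $\pi$ restrict to explicit combinatorial objects governed by $\cD_{ij}(y)$.

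The main body of the argument is then a case analysis over the twenty patterns for $\cD_{ij}(y)$ listed in Definition~\ref{tau-def}. For the ``regular'' patterns $\diagrambADc$, $\diagramCDab$, $\diagramcdAB$ (and the non-affine subcase of $\diagramCdaB$), direct substitution yields $w = t\, y\, t$ for the reflection $t \in \{t_{ij}, t_{i,y(j)}, t_{y(i),j}\}$ prescribed in Definition~\ref{tau-def}. The affine subcase of $\diagramCdaB$ (where $i \equiv y(j) \modu n)$) produces an extra Demazure collapse that accounts for the factor $\overline y$. The cases $\diagramAB, \diagramAcB, \diagramBaC, \diagramBadC, \diagramACb, \diagramBaDc, \diagrambAC, \diagrambAdC$ are handled similarly, always matching Definition~\ref{tau-def}. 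For each of the remaining ``otherwise'' patterns — those giving $\tau^n_{ij}(y)=y$ — the goal becomes showing that $\pi t_{ij} \in \HA(y) \setminus \cA(y)$, which is proved by realizing $\pi t_{ij}$ and some $\pi' \in \HA(y)$ of length $\ellhat(y)$ as $\approx_\cA$-equivalent.

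The main obstacle is the sheer number of patterns and subcases, plus the need to verify the precise Demazure collapses for the ``otherwise'' patterns — these are subtle because $\pi t_{ij}$ has the wrong length to lie in $\cA(y)$, yet we must argue that its Demazure square $(\pi t_{ij})^{-1}\circ(\pi t_{ij})$ still coincides with $y$. Because Corollary~\ref{local-cor} reduces each case to a combinatorial check on a bounded-size window, these verifications can be completed with the computer assistance alluded to in the introduction. Once all cases are settled, the theorem follows immediately from the reduction in the opening paragraph.
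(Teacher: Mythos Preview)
Your reduction to the ``key equality'' is sound in spirit, but the proposed proof of that equality has a genuine gap. The crux is your appeal to Corollary~\ref{local-cor}: that corollary only says that if $\pi\in\cA(y)$ then the \emph{standardization} of $\pi$ to a $y$-invariant window $E$ lies in $\cA(y')$. It does \emph{not} say that the Demazure square $(\pi t_{ij})^{-1}\circ(\pi t_{ij})$ can be computed from this restriction. To show $\pi t_{ij}\in\cA(z)$ via Theorem~\ref{local-thm} one must verify conditions (1)--(3) for \emph{every} pair of cycles of $z$, including cycles $(a,b)$ with $a,b\notin E+n\ZZ$; multiplying by $t_{ij}$ changes the relative order of $\pi$-values at $i,j$ versus arbitrary outside integers, and controlling this is exactly the substantive work. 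The paper's argument faces this head-on: for the cases $i\not\equiv y(j)\modu n)$ it introduces the ``virtual permutation'' $\V(\pi,y,E)$, which records not merely $\pi|_E$ but the full collection of order relations between $\pi$-values on $E$ and $\pi$-values on every outside cycle and fixed point, and then checks by computer that acting by $(i',j')$ preserves the virtual-atom property. Your sketch skips precisely this encoding.

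There is a second, related problem. When $i\equiv y(j)\modu n)$ the set $E=\{i,j,y(i),y(j)\}$ represents only two residue classes, so Corollary~\ref{local-cor} does not even apply, and your proposed restriction to $S_m$ collapses. The paper handles these three subcases (Lemmas~\ref{tau-thm-lem1}--\ref{tau-thm-lem3}) by direct, self-contained verification of the conditions in Theorem~\ref{local-thm}, carefully using Lemma~\ref{lem-2} to rule out intermediate values; there is no shortcut via standardization. Finally, your phrase ``direct substitution yields $w=t\,y\,t$'' is not meaningful for the Demazure product $\circ$: there is no substitution rule relating $(\pi t)^{-1}\circ(\pi t)$ to $\pi^{-1}\circ\pi$, so each case would still require the full local analysis the paper performs.
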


One half of this result is \cite[Theorem 8.10]{M}.
We delay giving the proof of the other half until Section~\ref{tau-sect1}.

As a shorthand in the following proposition, we write 
$\pi^{-1} = \dash a\dash b\dash c \dash \cdots\dash d \dash$
to mean that $\pi \in \tS_n$ has $\pi(a) < \pi(b) < \pi(c) < \dots < \pi(d)$.

\begin{proposition} \label{tech-prop}
Suppose $ y \in \tI_n$ and $\pi \in \cA(y)$.
Fix integers $i < j \not\equiv i \modu n)$ such that $\pi \lessdot \pi t_{ij}$
and  $\tau^n_{ij}(y) =y $.
One of the following cases then occurs:
\begin{enumerate}
\item[(A1)] $\cD_{ij}(y)$ is $\diagramCBa$ 
and $\pi^{-1} = \dash y(i) \dash i \dash j \dash$.
\item[(A2)] $\cD_{ij}(y)$ is $\diagramDCba$ 
and $\pi^{-1} = \dash y(j) \dash y(i) \dash i \dash j \dash$.
\item[(A3)] $\cD_{ij}(y)$ is $\diagramDcBa$ 
and $\pi^{-1} = \dash y(i) \dash i  \dash j  \dash y(j) \dash$.
\item[(B1)] $\cD_{ij}(y)$ is $\diagramcBA$ 
and $\pi^{-1} =  \dash i \dash j \dash y(j) \dash$.
\item[(B2)] $\cD_{ij}(y)$ is $\diagramdCbA$ 
and $\pi^{-1} = \dash y(i) \dash i  \dash j  \dash y(j) \dash$.
\item[(B3)] $\cD_{ij}(y)$ is $\diagramdcBA$ 
and $\pi^{-1} = \dash  i \dash j \dash y(j) \dash y(i) \dash$.
\item[(C1)] $\cD_{ij}(y)$ is $\diagramDCba$ 
and $\pi^{-1} = \dash y(i) \dash i \dash y(j) \dash j \dash$.
\item[(C2)] $\cD_{ij}(y)$ is $\diagramdcBA$ 
and $\pi^{-1} = \dash  i \dash y(i)  \dash j  \dash y(j)  \dash$.
\end{enumerate}
\end{proposition}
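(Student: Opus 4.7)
The plan is to carry out a case analysis on the shape of $\cD_{ij}(y)$. Given $\tau^n_{ij}(y) = y$, comparing against the twelve non-``otherwise'' configurations in Definition~\ref{tau-def} leaves exactly eight possibilities for $\cD_{ij}(y)$: the size-$2$ swap $\diagramBA$, the two size-$3$ crossing shapes $\diagramcBA$ and $\diagramCBa$, the four size-$4$ crossing shapes $\diagramDCba$, $\diagramDcBa$, $\diagramdCbA$, $\diagramdcBA$, and one remaining size-$4$ configuration with $y(j) < i < j < y(i)$ (whites at the inner positions, arcs crossing over them). The strategy is to eliminate two of these shapes and then match the remaining six with the eight listed orderings of $\pi^{-1}$.

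For the two eliminations I use the constraint $\pi(i) < \pi(j)$ coming from $\pi \lessdot \pi t_{ij}$ (Lemma~\ref{lem-2}). If $\cD_{ij}(y) = \diagramBA$, then $(i,j) \in \Cyc(y)$, so condition~(1) of Theorem~\ref{local-thm} forces $\pi(j) < \pi(i)$, a contradiction. If $\cD_{ij}(y)$ is the shape with $y(j) < i < j < y(i)$, then $(y(j),j)$ and $(i,y(i))$ both lie in $\Cyc(y)$; condition~(3) applied to this pair gives $\pi(y(j)) < \pi(y(i))$, and condition~(1) adds $\pi(j) < \pi(y(j))$ and $\pi(y(i)) < \pi(i)$. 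Chaining these we obtain $\pi(j) < \pi(i)$, again a contradiction.

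For the six remaining shapes, I would apply condition~(1) of Theorem~\ref{local-thm} to each cycle of $y$ meeting $\{i,j,y(i),y(j)\}$ to pin down the $\pi$-orderings of $\pi(i)$ versus $\pi(y(i))$ and of $\pi(j)$ versus $\pi(y(j))$. Together with $\pi(i) < \pi(j)$, this already uniquely determines the ordering of $\pi^{-1}$ on the four elements for the shapes $\diagramCBa$, $\diagramcBA$, $\diagramDcBa$, and $\diagramdCbA$, matching cases (A1), (B1), (A3), and (B2). For $\diagramDCba$ and $\diagramdcBA$, condition~(1) leaves two candidate orderings; in each case condition~(2) of Theorem~\ref{local-thm} forbids exactly one interleaved pattern, and the two surviving orderings for each shape realize the pairs (A2)+(C1) and (B3)+(C2) respectively.

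The main obstacle is the careful bookkeeping of the case analysis above: verifying in each of the six shapes that conditions~(2) and~(3) of Theorem~\ref{local-thm} are applied to the correct cycle representatives, and that Lemma~\ref{lem-2}'s no-intermediate-descent requirement (no $e$ with $i<e<j$ has $\pi(i)<\pi(e)<\pi(j)$) does not eliminate any of the surviving orderings. The latter consistency holds automatically, because condition~(1) always pushes any $y(i)$ or $y(j)$ lying inside $(i,j)$ to a $\pi$-value outside the interval $(\pi(i),\pi(j))$. Potential congruences among $i,j,y(i),y(j)$ modulo $n$ do not affect the argument, since Theorem~\ref{local-thm} is phrased entirely in terms of integer pairs in $\Cyc(y)$.
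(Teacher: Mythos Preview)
Your approach is essentially the same as the paper's, which is also a case analysis on $\cD_{ij}(y)$: eliminate the two excluded shapes via Theorem~\ref{local-thm} forcing $\pi(j)<\pi(i)$, then pin down the orderings in the remaining six shapes using Theorem~\ref{local-thm} together with Lemma~\ref{lem-2}. Your elimination arguments for $\diagramBA$ and for the $y(j)<i<j<y(i)$ shape are correct and more explicit than the paper's one-line appeal to Theorem~\ref{local-thm}.

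There is one small counting slip worth fixing. For $\diagramDCba$ (so $i<j<y(j)<y(i)$) condition~(1) together with $\pi(i)<\pi(j)$ gives $\pi(y(i))<\pi(i)<\pi(j)$ and $\pi(y(j))<\pi(j)$, which leaves \emph{three} possible positions for $\pi(y(j))$, not two; condition~(2) then forbids $\pi(y(i))<\pi(y(j))<\pi(i)$, leaving the two orderings (A2) and (C1). The same three-then-two count applies symmetrically to $\diagramdcBA$. As written, your sentence ``condition~(1) leaves two candidate orderings; in each case condition~(2) \dots\ forbids exactly one \dots\ and the two surviving orderings'' is internally inconsistent. Once this is corrected to ``three candidate orderings'', the argument goes through exactly as you describe.
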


Observe that in each case we have $i \not\equiv y(j) \modu n)$.

\begin{proof}
The only way one can have $y = \tau^n_{ij}(y)$
outside the given cases is if
$y(j) = i < j = y(i)$ or $y(j) < i < j < y(i)$, but then
Theorem~\ref{local-thm} implies that $\pi \in \cA(y)$ has
$\pi(j) < \pi(i)$,
so it cannot hold that $\pi\lessdot \pi t_{ij}$.
When $y$ corresponds to the one of the given cases,
the possibilities for $\pi \in \cA(y)$ with $\pi\lessdot \pi t_{ij}$
are completely determined by Lemma~\ref{lem-2} and Theorem~\ref{local-thm}.
\end{proof}

This sets up the statement of our second technical theorem.

\begin{theorem}[Toggling property]\label{tau-thm3}
Suppose $y \in \tI_n$ and $\pi \in \cA(y)$.
Fix integers  $i<j \not \equiv i \modu n)$ such that $\pi \lessdot \pi t_{ij}$ and $y=\tau^n_{ij}(y)$.
Relative to the statement of Proposition~\ref{tech-prop}, define 
\[
k = \begin{cases}
j &\text{in cases (A1)-(A3)} \\
y(j) &\text{in cases (B1)-(B3), (C1), (C2)},
\end{cases}
\quad
l =  \begin{cases}
y(i) &\text{in cases (A1)-(A3), (C1), (C2)} \\
i &\text{in cases (B1)-(B3).}
\end{cases}
\]
Then $k<l\not \equiv k \modu n)$ and $\pi \neq \pi t_{ij} t_{kl} \in \cA(y)$ and $y=\tau^n_{kl}(y)$.
\end{theorem}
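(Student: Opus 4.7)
The plan is to prove Theorem~\ref{tau-thm3} by a case-by-case verification, following the eight-case classification in Proposition~\ref{tech-prop}. In each case I will establish three things: (a) $k < l$ and $k \not\equiv l \modu n)$; (b) $\tau^n_{kl}(y) = y$; and (c) $\pi' := \pi t_{ij} t_{kl}$ satisfies $\pi' \neq \pi$ and $\pi' \in \cA(y)$.

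Claims (a) and (b) are essentially bookkeeping. For (a), $k$ and $l$ are always elements of $\{i, j, y(i), y(j)\}$, which represent distinct residues modulo $n$ in each configuration of $\cD_{ij}(y)$; the inequalities $k < l$ are immediate from the orderings given in Proposition~\ref{tech-prop}, for instance $k = j < y(i) = l$ in cases (A1)--(A3) and $k = y(j) < i = l$ in cases (B1)--(B3). For (b), I will compute $\cD_{kl}(y)$ by relabeling which two among $\{i, j, y(i), y(j)\}$ become the white vertices; in each of the eight cases the resulting diagram turns out to be one of $\diagramcBA$, $\diagramCBa$, $\diagramdCbA$, $\diagramdcBA$, $\diagramDCba$, $\diagramDcBa$, none of which appears among the branches enumerated in Definition~\ref{tau-def}. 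Hence $\tau^n_{kl}(y) = y$ falls out from the ``otherwise'' clause.

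Claim (c) is the technical heart of the theorem. The distinctness $\pi' \neq \pi$ is immediate because $\{i, j\} \neq \{k, l\}$ as unordered pairs in each case, so $t_{ij} t_{kl} \neq 1$. To prove $\pi' \in \cA(y)$ I plan to apply the local characterization of atoms in Theorem~\ref{local-thm} directly, verifying conditions (1)--(3) for $\pi'$ against every pair of cycles in $\Cyc(y)$. The critical simplification is that $\pi'$ agrees with $\pi$ at every integer outside $\{i, j, y(i), y(j)\} + n\ZZ$, so conditions involving only cycles disjoint from these positions transfer automatically from the hypothesis $\pi \in \cA(y)$. The rearrangement produced by $t_{ij} t_{kl}$ is an explicit 3-cycle (in cases A1 and B1, when $\cD_{ij}(y)$ has three vertices) or a 4-cycle or double-transposition on four positions (in the remaining cases); substituting these into conditions (1)--(3) against the cycles $(i, y(i))$ and $(y(j), j)$, or any other cycle meeting these positions, and using the explicit value orderings of Proposition~\ref{tech-prop}, reduces each sub-check to a short finite comparison of $\pi$-values.

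The principal obstacle will be the sheer volume of case work, compounded by the need to verify condition (2) of Theorem~\ref{local-thm} against cycles of $y$ that nest around $\{i, j, y(i), y(j)\}$ and condition (3) against cycles that may lie entirely elsewhere. Corollary~\ref{local-cor} will be used repeatedly to restrict each verification to a finite window $E$ containing the four special positions together with any other positions relevant to the check at hand, reducing each of the eight sub-arguments to a finite combinatorial inspection paralleling the one already implicit in $\pi \in \cA(y)$.
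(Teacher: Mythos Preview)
Your plan is essentially the paper's own: both reduce the claim $\pi t_{ij}t_{kl}\in\cA(y)$ to a case-by-case verification of the local atom conditions in Theorem~\ref{local-thm}, split along the eight configurations of Proposition~\ref{tech-prop}. The paper, however, does not attempt this by hand. It packages the relevant data into an auxiliary object it calls a \emph{virtual permutation}---recording, for every possible linear ordering of $E=\{i,j,y(i),y(j)\}$ against an external cycle $(p,q)$, a fixed point $r$, or a translate $E-kn$, the set of compatible orderings of the $\pi$-values---and then checks by a {\tt Python} program that right-multiplying this gadget by $(i',j')(k',l')$ preserves the axioms encoding Theorem~\ref{local-thm}. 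The paper states explicitly that the direct argument, while ``entirely analogous'' in principle, is ``too complicated to carry out by hand.'' So your approach buys transparency at the cost of a very large manual case analysis; the paper's buys brevity at the cost of a computer dependency.

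One caution on your last paragraph: Corollary~\ref{local-cor} runs in the wrong direction for what you need. It says that restricting an atom to a $y$-invariant window yields an atom of the restricted involution; it does not say that if all such restrictions are atoms then the original permutation is. You therefore cannot use it to confine the verification of $\pi'\in\cA(y)$ to a finite window. You must still check conditions (1)--(3) of Theorem~\ref{local-thm} for $\pi'$ against every cycle $(a,b)\in\Cyc(y)$ with $a$ or $b$ outside $E+n\ZZ$, deducing the needed inequalities from the hypotheses $\pi\in\cA(y)$ and $\pi\lessdot\pi t_{ij}$ (via Lemma~\ref{lem-2}); this interaction with arbitrary external cycles is exactly the source of the case explosion the paper offloads to the computer.
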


\begin{remark}
If the product 
$s_{i_1} \cdots s_{i_m}$ is not reduced 
but
$s_{i_1}\cdots \widehat{s_{i_j}} \cdots s_{i_m}$
is a reduced expression for some $\pi \in \tS_n$, where $\widehat{s_{i_j}}$
denotes the omission of one factor, 
then there exists a \emph{unique} index $j\neq k$ such that 
$s_{i_1}\cdots \widehat{s_{i_k}} \cdots s_{i_m}$ is also a reduced expression
for $\pi$ \cite[Lemma 21]{LamShim}. In fact, Lam and Shimozono prove in \cite{LamShim} that this property,
stated in slightly more general language, holds
for arbitrary Coxeter systems.

Fix $y \in \tI_n$.
Theorem~\ref{tau-thm3} implies that 
if
$s_{i_1} \cdots s_{i_m}$ is not a reduced word for any atom of any $z \in \tI_n$, 
and $s_{i_1}\cdots \widehat{s_{i_j}} \cdots s_{i_m}$ is a reduced expression
for some $\pi \in \cA(y)$, then there exists an index $j\neq k$
such that $s_{i_1}\cdots \widehat{s_{i_k}}\cdots s_{i_m}$
is a reduced expression for some (possibly different) atom $\sigma \in \cA(y)$.
We suspect but do not know how to prove that  $k$ is
again uniquely
determined; 
see \cite[Conjecture 3.42]{HMP3}.
This at least holds if $y \in I_n \subset \tI_n$ \cite[Lemma 3.34]{HMP3}.
\end{remark}

The proof of Theorem~\ref{tau-thm3} is at the end of Section~\ref{tau-sect1}.
We turn to some easier lemmas.

\begin{lemma}\label{ijkl-lem}
Let $y \in \tI_n$.
Suppose $i < j \not\equiv i \modu n)$ and $k < l \not \equiv k \modu n)$ are integers 
with $\tau^n_{ij}(y) = \tau^n_{kl}(y)\neq y$.
Then some $m \in \ZZ$ is such that 
 $k + mn \in \{ i, y(i)\}$ and $l + mn \in \{j, y(j)\}$.
\end{lemma}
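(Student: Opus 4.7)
The plan is to proceed by case analysis on Definition~\ref{tau-def}. Let $z = \tau^n_{ij}(y) = \tau^n_{kl}(y) \neq y$, and define $T := \{e \in \ZZ : z(e) \neq y(e)\}$, which is nonempty and $n$-periodic since $y$ and $z$ are both affine. First I would verify, using the explicit formula $z = t \cdot y \cdot t$ or $z = t \cdot \overline{y}$ in each of the seven nontrivial cases of Definition~\ref{tau-def}, that
\[
T \subseteq \{i, j, y(i), y(j)\} + n\ZZ
\quad\text{and}\quad
\{i, j\} + n\ZZ \subseteq T.
\]
The first inclusion is immediate from the formulas, since $\overline{y}$ agrees with $y$ outside $\{i, j, y(i), y(j)\} + n\ZZ$ and $t$ is supported there as well; the second requires a direct check in each diagram type that $z(i) \neq y(i)$ and $z(j) \neq y(j)$. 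Applying the same observation to $(k, l)$, the $n$-translation classes of $k$ and $l$ lie in $\{i, j, y(i), y(j)\} + n\ZZ$, and vice versa.

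Since $\{k, y(k)\}$ and $\{l, y(l)\}$ are $y$-orbits of size at most $2$ and $y$ commutes with translation by $n$, the orbit $\{k, y(k)\}$ must be an $n$-translate of either $\{i, y(i)\}$ or $\{j, y(j)\}$, and analogously for $\{l, y(l)\}$. Using $k < l$ together with the arrangement of white and black vertices in the specific diagram $\cD_{ij}(y)$, I would argue that the ``swap'' assignment (in which $\{k, y(k)\}$ translates to $\{j, y(j)\}$ and $\{l, y(l)\}$ to $\{i, y(i)\}$) forces $\cD_{kl}(y)$ either into the ``otherwise'' case of Definition~\ref{tau-def}, yielding the forbidden $\tau^n_{kl}(y) = y$, or into a nontrivial case whose explicit formula produces values of $z$ inconsistent with those obtained from $\tau^n_{ij}(y)$. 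Ruling out the swap yields an integer $m$ with $i' := k + mn \in \{i, y(i)\}$.

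To see that $l + mn$ lies in $\{j, y(j)\}$ itself (not merely in $\{j, y(j)\} + n\ZZ$), I would use affine periodicity: $z(k) - y(k) = z(i') - y(i')$, and the right-hand side is an explicit integer determined by $(i, j)$ via Definition~\ref{tau-def} (for instance, $y(j) - y(i)$ in the conjugation cases). Matching this against the analogous difference $z(k) - y(k)$ computed from the formula for $\tau^n_{kl}(y)$, and using $y(l) = y(j' + m'n)$ for whichever $j' \in \{j, y(j)\}$ is the $n$-representative of $l$, forces $m' = m$ and pins down $l + mn$ as a specific element of $\{j, y(j)\}$.

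The main obstacle is the bookkeeping across the seven nontrivial diagram types of Definition~\ref{tau-def}, since the explicit formula for $z(e) - y(e)$ depends both on the diagram and on any congruence coincidences among $\{i, j, y(i), y(j)\}$ modulo $n$. The ``generic'' subcases in which these four integers represent four distinct classes modulo $n$ are uniform, but case E (where $i \equiv y(j) \pmod n$, so $\{i, y(i)\}$ and $\{j, y(j)\}$ are $n$-translates of a common orbit) and the subcases in which $y(i) = i$ or $y(j) = j$ each require individual, though analogous, verifications.
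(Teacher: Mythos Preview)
Your approach is essentially the same as the paper's: the paper's entire proof is the single sentence ``This follows by inspecting Definition~\ref{tau-def},'' and what you outline is precisely a structured version of that inspection, organized around the difference set $T=\{e:z(e)\neq y(e)\}$ and then a diagram-by-diagram verification. The organizing observations you give (that $T\subseteq\{i,j,y(i),y(j)\}+n\ZZ$, that $\{i,j\}+n\ZZ\subseteq T$, and that matching $z(k)-y(k)$ pins down the common shift $m$) are sound and do reduce the problem to the finite case check you describe, so there is no gap---but there is also no genuinely different idea here beyond making explicit the bookkeeping the paper leaves implicit.
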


\begin{proof}
This follows by inspecting Definition~\ref{tau-def}.
\end{proof}

Let $\Refl(\tS_n) = \{t_{ij} \in \tS_n : i<j\not\equiv i \modu n)\}$ 
be the set of reflections in $\tS_n$.  

\begin{lemma}\label{atom-bij-lem}
Suppose $y, z \in \tI_n$ and $y\lessdot_I z$.
The map $(\pi,t) \mapsto \pi t$ is a bijection
from the set of pairs $(\pi ,t) \in \cA(y) \times \Refl(\tS_n)$
with 
$\pi \lessdot \pi t $ and $ t=t_{ij}$ where $z = \tau^n_{ij}(y)$
 to the set of atoms  $\cA(z)$.
\end{lemma}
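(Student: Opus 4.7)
The plan is to verify three things in turn: that the map lands in $\cA(z)$, that it is surjective, and that it is injective. The first two follow almost immediately from the theorems already in hand, while injectivity is where the real work lies.

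For well-definedness, take any pair $(\pi, t_{ij})$ in the domain. Because $y \lessdot_I z$ forces $z \neq y$, the hypothesis $z = \tau^n_{ij}(y)$ means $\tau^n_{ij}(y) \neq y$, and then Theorem~\ref{tau-thm1} directly gives $\pi t_{ij} \in \cA(z)$. For surjectivity, start with an arbitrary $\sigma \in \cA(z)$. The implication (a)$\Rightarrow$(b) of Theorem~\ref{tau-thm2} supplies some $\pi \in \cA(y)$ with $\pi \lessdot \sigma$, so $\sigma = \pi t_{ij}$ for a unique reflection $t_{ij} \in \Refl(\tS_n)$. Since $\pi t_{ij} = \sigma \in \cA(z)$ and $\pi \in \cA(y)$ with $\pi \lessdot \pi t_{ij}$, Theorem~\ref{tau-thm1} applied in the opposite direction forces $z = \tau^n_{ij}(y)$, so $(\pi, t_{ij})$ is in the domain and maps to $\sigma$.

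The main task is injectivity. Suppose $(\pi_1, t_{a_1 b_1})$ and $(\pi_2, t_{a_2 b_2})$ both map to the same $\sigma \in \cA(z)$. Since $\pi_k = \sigma \cdot t_{a_k b_k}$ is determined by the reflection, it suffices to show $t_{a_1 b_1} = t_{a_2 b_2}$. Both reflections satisfy $\tau^n_{a_k b_k}(y) = z \neq y$, so Lemma~\ref{ijkl-lem} restricts $\{a_2,b_2\}$, after a shift by some multiple of $n$ (which does not alter the reflection as an element of $\tS_n$), to be a two-element subset of $\{a_1, b_1, y(a_1), y(b_1)\}$. One then runs through the possible diagrams $\cD_{a_1 b_1}(y)$ admitted by Definition~\ref{tau-def} under $\tau^n_{a_1 b_1}(y)\neq y$. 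In each case, the cover condition $\pi_1 \lessdot \pi_1 t_{a_1 b_1}$ from Lemma~\ref{lem-2} together with the local conditions of Theorem~\ref{local-thm} applied to $\pi_1 \in \cA(y)$ pin down the relative order of $\pi_1$ on the four indices $\{a_1, b_1, y(a_1), y(b_1)\}$ tightly enough to rule out every alternative pair $\{a_2, b_2\}\neq \{a_1, b_1\}$: either such an alternative fails the cover condition $\sigma(a_2) > \sigma(b_2)$, or the putative atom $\pi_2 = \sigma t_{a_2 b_2}$ violates one of conditions (1)--(3) of Theorem~\ref{local-thm} relative to $y$, or the putative reflection does not satisfy $\tau^n_{a_2 b_2}(y) = z$.

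The main obstacle is exactly this case analysis. The delicate diagrams are the ``symmetric'' ones such as $\diagramCDab$ and $\diagramcdAB$, where both $\{i,j\}$ and $\{y(i),y(j)\}$ a priori satisfy $\tau^n_{\cdot\cdot}(y) = z$, together with the special case of $\diagramCdaB$ when $i \equiv y(j) \modu n)$ (where the second branch of Definition~\ref{tau-def} is invoked and $\overline y$ intervenes). In every such case one must check, using conditions (2) and (3) of Theorem~\ref{local-thm} — which restrict the possible orderings of $\pi_1$ on the four critical positions once $\pi_1(a_1) < \pi_1(b_1)$ is fixed — that the hypothetical second atom $\pi_2$ would require a forbidden pattern relative to $y$, forcing $\{a_1,b_1\} = \{a_2,b_2\}$ and completing the proof.
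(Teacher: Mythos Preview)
Your proposal is correct and follows essentially the same approach as the paper: well-definedness and surjectivity via Theorems~\ref{tau-thm1} and \ref{tau-thm2}, then injectivity by using Lemma~\ref{ijkl-lem} to restrict the second reflection and ruling out alternatives with Theorem~\ref{local-thm}. The paper organizes the injectivity case analysis by how many of $i,j$ are fixed by $y$ rather than by the diagram $\cD_{ij}(y)$, and it carries out the contradictions explicitly (showing, for instance, that an alternative choice forces $\pi_2(i)>\pi_2(j)$ while Theorem~\ref{local-thm} gives $\pi_2(i)<\pi_2(j)$), whereas your version leaves this as a sketch; but the underlying argument is the same.
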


\begin{proof}
Theorems~\ref{tau-thm2} and \ref{tau-thm1} imply that the given map is surjective. 
To prove that the map is injective, 
fix integers $i<j \not\equiv i \modu n)$ and $k<l \not \equiv l \modu n)$ and 
suppose $\pi_1,\pi_2\in\cA(y)$ are such that 
$\pi_1t_{ij}=\pi_2t_{kl}\in\cA(z)$ and $z=\tau_{ij}^n(y)=\tau_{kl}^n(y)$.
 By Lemma~\ref{ijkl-lem}, we may assume that $k\in\{ i, y(i)\}$ and $l\in \{j, y(j)\}$.
 We must show that $t_{ij}=t_{kl}$ and $\pi_1=\pi_2$.
 There are four cases:
\begin{enumerate}
    \item[(1)] $i$ and $j$ are fixed points of $y$, in which case $i=k$ and $j=l$.
    \item[(2)] $i$ but not $j$ is a fixed point of $y$, in which case $i=k$.
    \item[(3)] $j$ but not $i$ is a fixed point of $y$, in which case $j=l$.
    \item[(4)] $i$ and $j$ are not fixed points of $y$.
\end{enumerate}
In case (1), it clearly holds that $t_{ij} =t_{kl}$ and $\pi_1=\pi_2$. 
Suppose we are in case (2).
If $j=l$ then we have 
$t_{ij} =t_{kl}$ and
$\pi_1=\pi_2$ as before. If $y(j) = l$ then $i<\min\{j,y(j)\}$ and $i\not\equiv y(j)\modu n)$, so Theorem~\ref{local-thm}
implies that $\pi_1(i) < \pi_1(y(j))$ and $\pi_2(i) < \pi_2(j)$, which leads to the contradiction
\[  \pi_2(i) =\pi_1t_{ij} t_{kl} (i) =\pi_1t_{ij}(y(j)) = \pi_1(y(j)) >\pi_1(i)= \pi_2 t_{kl}t_{ij}(i) = \pi_2t_{kl}(j) =\pi_2(j).\]
We deduce that $t_{kl}=t_{ij}$ and $\pi_1=\pi_2$ in case (2), as desired.
In case (3), the same conclusion follows by a symmetric argument.

Finally suppose we are in case (4).
Since $y \neq \tau^n_{ij}(y)$, consulting Definition~\ref{tau-def} shows that
we must have $\min\{i,y(i)\} < \min\{j,y(j)\}$ and $\max\{i,y(i)\} < \max\{j,y(j)\}$,
so
it follows from Theorem~\ref{local-thm}
that
$\pi_1(a) < \pi_1(b)$ and $\pi_2(a) < \pi_2(b)$ for all $a \in \{i,y(i) \} = \{k,y(k)\}$ and
$b \in \{j,y(j)\} = \{l,y(l)\}$.
If $i=k<l=y(j)$ or $y(i)=k<l=j$,  then it must hold that $i \not \equiv y(j) \modu n)$
and we derive the contradiction $\pi_2(i) >\pi_2(j)$ as in the previous cases.
If $y(i)=k<l=y(j)$ and $i \not \equiv y(j) \modu n)$ then we have the same contradiction
$\pi_2(i) = \pi_1(j) > \pi_1(i) = \pi_2(j)$.
The only way we can have $y(i) = k<l = y(j)$ and $i \equiv y(j) \modu n)$ 
is if $\max\{i,y(i)\} < \min\{j,y(j)\}$, but then $\tau^n_{ij}(y) \neq \tau^n_{kl}(y)$.
The only remaining possibility is to have $t_{ij} =t_{kl}$, in which case $\pi_1=\pi_2$ as desired.
\end{proof}

\def\iPhi{\Phi}

Fix $y \in \tI_n$. For $ r \in \ZZ$, define
\be\label{iphi-sets-eq}
\ba
 \iPhi^-_r(y) &= \left\{ z \in \tI_n : y \lessdot_I z=\tau^n_{ir}(y) \text{ for some }i < r\text{ with }i\notin \{r,y(r)\} + n\ZZ\right\},
\\
\iPhi^+_r(y) &= \left\{ z \in \tI_n : y \lessdot_I z=\tau^n_{rj}(y)\text{ for some }j > r\text{ with }j\notin \{r,y(r)\} + n\ZZ\right\}
.
\ea
\ee
In the cases of primary interest, these analogues of \eqref{phi-sets-eq} have a  simpler description.

\begin{lemma}\label{phi-lem}
Suppose $y \in \tI_n$ and $(p,q) \in \Cyc(y) := \{ (i,j) \in \ZZ \times \ZZ : i \leq j=y(i)\}$.
Then:
\ben
\item[(a)] $\iPhi^-_p(y) =  \left\{ \tau^n_{ip}(y) : i<p\text{ and }i \notin \{p,q\} + n \ZZ\text{ and }y \lessdot yt_{ip} \right\}$.
\item[(b)] $\iPhi^+_q(y) =  \left\{ \tau^n_{qj}(y) : j>q\text{ and }j \notin \{p,q\} + n \ZZ\text{ and }y \lessdot yt_{qj} \right\}$.
\item[(c)] $\iPhi^-_p(y) \supset \iPhi^-_q(y)$ and $\iPhi^+_p(y) \subset \iPhi^+_q(y)$ and  $\iPhi^-_p(y)\cap \iPhi^+_q(y) = \varnothing$.
\een
\end{lemma}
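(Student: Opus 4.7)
My plan is to handle (a) and (b) via Lemma~\ref{length-lem}, and (c) by combining Lemma~\ref{ijkl-lem} with a case analysis on $\cD_{ij}(y)$.

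For (a), I would first check that when $i < p$ and $i \notin \{p, q\} + n\ZZ$, the pair $(i, p)$ satisfies the hypotheses of Lemma~\ref{length-lem}: the condition $j \leq y(j)$ follows from $(p, q) \in \Cyc(y)$ (giving $p \leq q = y(p)$), and $i \not\equiv y(j) \modu n)$ is precisely $i \not\equiv q \modu n)$. Hence $y \lessdot_I \tau^n_{ip}(y) \Leftrightarrow y \lessdot y t_{ip}$ whenever $\tau^n_{ip}(y) \neq y$. To finish the set equality, I would verify that $y \lessdot y t_{ip}$ already forces $\tau^n_{ip}(y) \neq y$ by inspecting the eight ``otherwise'' configurations of Definition~\ref{tau-def}: under $p \leq q$, all but $\diagramCBa$ (which requires $p = q$) and $\diagramDCba$ are ruled out, and both survivors give $y(i) > y(p)$, contradicting $y \lessdot y t_{ip}$. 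Part (b) is the mirror argument, applied with $(i, j) = (q, j)$ using $y(q) = p \leq q$ to satisfy the ``$y(i) \leq i$'' hypothesis, and a symmetric check of the ``otherwise'' cases.

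For part (c), the disjointness $\iPhi^-_p(y) \cap \iPhi^+_q(y) = \varnothing$ is immediate from Lemma~\ref{ijkl-lem}: if $z = \tau^n_{ip}(y) = \tau^n_{qj}(y) \neq y$ with $i < p$ and $j > q$, then the lemma supplies $m \in \ZZ$ with $j + mn \in \{p, q\}$; since $j > q \geq p$ we need $m < 0$, but this forces $j \equiv p$ or $j \equiv q \modu n)$, contradicting $j \notin \{p, q\} + n\ZZ$. For the inclusions $\iPhi^-_p(y) \supset \iPhi^-_q(y)$ and $\iPhi^+_p(y) \subset \iPhi^+_q(y)$, I would use the observation that the vertex set of $\cD_{iq}(y)$ is $\{i, q, y(i), p\}$, which also equals the vertex set of $\cD_{i'p}(y)$ when $i' \in \{i, y(i)\}$. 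The plan is then to check, across the twenty possible configurations, that either $i' = i$ (when $i < p$) or $i' = y(i)$ (when $y(i) < p$) yields a diagram $\cD_{i'p}(y)$ whose recipe from Definition~\ref{tau-def} produces the same affine involution as $\cD_{iq}(y)$. For example, $\diagramcdAB$ for $\cD_{iq}(y)$ pairs with $\diagramCDab$ for $\cD_{y(i), p}(y)$, and both yield $t_{y(i), p}\, y\, t_{y(i), p}$. The inclusion involving $\iPhi^+$ follows by the mirror argument.

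The main obstacle is the case analysis in part (c): for each non-trivial $\cD_{iq}(y)$, one must identify the matching $\cD_{i'p}(y)$ and verify that the formulas in Definition~\ref{tau-def} produce the same element of $\tI_n$. The hypothesis $p \leq q$ rules out many configurations, but the remaining diagrams still require careful bookkeeping.
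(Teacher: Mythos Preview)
Your proposal is correct and follows essentially the same approach as the paper: parts (a) and (b) via Lemma~\ref{length-lem}, the inclusions in (c) by a case check against Definition~\ref{tau-def}, and the disjointness in (c) via Lemma~\ref{ijkl-lem}. Your write-up simply makes explicit the verifications (that $y\lessdot yt_{ip}$ forces $\tau^n_{ip}(y)\neq y$, and the diagram-matching for the inclusions) that the paper leaves as ``immediate'' or ``straightforward.''
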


\begin{proof}
Parts
(a) and (b) are immediate corollaries of Theorem~\ref{length-lem}. 
To show that $\iPhi^-_p(y) \supset \iPhi^-_q(y)$, let $(p,q)\in\Cyc(y)$. 
It suffices to check that 
if
$j<q$ and $j \notin \{p,q\} +n\ZZ$ and $y \neq \tau^n_{jq}(y)$
then there exists
$i < p$ with $i \notin \{p,q\} + n \ZZ$ and $\tau^n_{ip}(y) =\tau^n_{jq}(y)$.
This is straightforward from Definition~\ref{tau-def}.
The proof that $\iPhi^+_p(y) \subset \iPhi^+_q(y)$ is similar.
Finally, the sets $\iPhi^-_p(y)$ and $ \iPhi^+_q(y) $ are disjoint because
we can only have $\tau^n_{ip}(y) = \tau^n_{qj}(y)$ if 
$i,j \in \{p,q\} + n\ZZ$ by Lemma~\ref{ijkl-lem}.
\end{proof}

We now present our transition formula for $\iF_y$.
This is both an involution analogue of Theorem~\ref{trans-thm}
and an affine generalization of \cite[Theorem 3.10]{HMP4}.
The latter result is itself the ``stable limit'' of a transition formula
for \emph{involution Schubert polynomials} \cite[Theorem 1.5]{HMP3}.

\begin{theorem}\label{main-thm}
 Suppose $y \in \tI_n$ and $p,q \in \ZZ$ are such that $p\leq q = y(p)$. Then 
\[\sum_{z \in \iPhi_p^-(y)} \iF_z = \sum_{z \in \iPhi^+_q(y)} \iF_z.\]
\end{theorem}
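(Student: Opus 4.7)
The plan is to prove the transition formula atom-by-atom, by applying the affine Lam-Shimozono identity (Theorem~\ref{trans-thm}) to each $\pi \in \cA(y)$ and then reassembling the resulting terms into involution-indexed sums via the covering and toggling theorems. First I use Lemma~\ref{atom-bij-lem} together with Lemma~\ref{ijkl-lem} to rewrite
\[
\sum_{z \in \iPhi^-_p(y)} \iF_z = \sum \tF_{\pi t_{ij}}
\quand
\sum_{z \in \iPhi^+_q(y)} \iF_z = \sum \tF_{\pi t_{ij}}
\]
where the sums run over pairs $(\pi, t_{ij}) \in \cA(y) \times \Refl(\tS_n)$ with $\pi \lessdot \pi t_{ij}$ and $\tau^n_{ij}(y) \in \iPhi^-_p(y)$ (respectively $\iPhi^+_q(y)$); Lemma~\ref{ijkl-lem} forces such a reflection to have $j \in \{p,q\}+n\ZZ$ on the left, and $i \in \{p,q\}+n\ZZ$ on the right.

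For each $\pi \in \cA(y)$, I apply Theorem~\ref{trans-thm} at $r = p$ and, when $p < q$, also at $r = q$, then sum over $\pi$ and add the two identities. The combined identity equates two aggregates $\mathrm{LHS}$ and $\mathrm{RHS}$ ranging, respectively, over pairs $(\pi, t_{ij})$ with $\pi \lessdot \pi t_{ij}$ and with $j \in \{p,q\}$, or with $i \in \{p,q\}$. By Theorem~\ref{tau-thm1}, each pair with $\tau^n_{ij}(y) \neq y$ yields an atom $\pi t_{ij}$ of the corresponding cover $\tau^n_{ij}(y)$ of $y$; Lemma~\ref{ijkl-lem} then determines whether this cover lies in $\iPhi^-_p(y)$, in $\iPhi^+_q(y)$, or in neither. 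The ``neither'' terms arise exactly from reflections of the form $t_{q-kn,\,p} = t_{q,\,p+kn}$ or $t_{p-kn,\,q} = t_{p,\,q+kn}$ for $k \geq 1$, and each such reflection is counted once on $\mathrm{LHS}$ (via one canonical representation, e.g.\ $t_{q-kn,p}$ in the $r=p$ transition) and once on $\mathrm{RHS}$ (via the other, $t_{q,p+kn}$ in the $r=q$ transition), so its contributions cancel in the combined equation.

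The pairs with $\tau^n_{ij}(y) = y$---the ``ghosts''---fall into the eight local configurations (A1)--(C2) of Proposition~\ref{tech-prop}. Theorem~\ref{tau-thm3} produces for each ghost $(\pi, t_{ij})$ a distinct partner $(\pi t_{ij} t_{kl},\, t_{kl})$ that is also a ghost, satisfies $\pi t_{ij} = (\pi t_{ij} t_{kl})\, t_{kl}$, and (as a case-by-case inspection of the eight cases shows) lies on the opposite side of the combined LS equation from the original ghost. Consequently each ghost on the $\mathrm{LHS}$ cancels with its toggled partner on the $\mathrm{RHS}$ in the combined equation. After all good-term cancellations from the previous paragraph and ghost cancellations from the toggling here, the surviving $\mathrm{LHS}$ is precisely $\sum_{z \in \iPhi^-_p(y)} \iF_z$ while the surviving $\mathrm{RHS}$ is $\sum_{z \in \iPhi^+_q(y)} \iF_z$, yielding the claim.

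The main obstacle is the final case-by-case check that each ghost does indeed toggle across the $\mathrm{LHS}$--$\mathrm{RHS}$ divide of the combined transition: there are eight configurations in Proposition~\ref{tech-prop}, and for each the toggling output $(k,l)$ may depend sensitively on which of $i, j, y(i), y(j)$ is congruent to $p$ or $q$, so the routing must be verified in every subcase under the constraint $p \leq q$. The degenerate case $p = q$ (a fixed point of $y$) is considerably cleaner: only the single transition at $r = p$ is required, and the only relevant ghost configurations are (A1) and (B1), which toggle to each other within that one identity.
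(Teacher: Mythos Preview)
Your proposal is correct and follows essentially the same route as the paper's proof: both sum the Lam--Shimozono identity over $\cA(y)$ at $r=p$ and $r=q$, cancel the contributions from reflections with both endpoints in $\{p,q\}+n\ZZ$ (your ``neither'' terms; the paper handles these in the sentence preceding \eqref{id1-eq}), identify the remaining non-ghost terms with $\sum_{z\in\iPhi^-_p(y)}\iF_z$ and $\sum_{z\in\iPhi^+_q(y)}\iF_z$ via Theorem~\ref{tau-thm1} and Lemmas~\ref{ijkl-lem}--\ref{phi-lem}, and then cancel the ghosts using the toggling involution of Theorem~\ref{tau-thm3}. The only cosmetic difference is that you invoke Lemma~\ref{atom-bij-lem} at the outset to rewrite the target sums, whereas the paper works forward from \eqref{id1-eq}; the two directions meet at the same decomposition, and both leave the verification that the toggle is an involution carrying $\cN^-$ to $\cN^+$ as a routine case check.
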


\def\LHS{L}
\def\RHS{R}

\begin{proof}
If $i<p\not\equiv i \modu n)$ and $i \in q + n \ZZ$, then
$t_{ip} = t_{qj}$ where $q < j = p + q - i \in p + n\ZZ$.
Likewise, if $i<q\not\equiv i \modu n)$ and $i \in p + n \ZZ$, then
$t_{iq} = t_{pj}$ where $p<j = p + q - i \in p + n\ZZ$.
In view of these observations,
Lam and Shimozono's transition formula (Theorem~\ref{trans-thm}) implies that 
\be\label{id1-eq}
\sum_{\pi \in \cA(y)}\(\sum_{\substack{i < p   \\ \pi \lessdot \pi t_{ip}}} \tF_{\pi t_{ip}}
+
\sum_{\substack{i < q  \\ \pi \lessdot \pi t_{iq}}} \tF_{\pi t_{iq}}\)
=
\sum_{\pi \in \cA(y)}
\(
\sum_{\substack{p<i  \\ \pi \lessdot \pi t_{pi}}} \tF_{\pi t_{pi}}
+
\sum_{\substack{q < i   \\ \pi \lessdot \pi t_{qi}}} \tF_{\pi t_{qi}}
\)
\ee
where the inner sums range over integers $i \notin \{p,q\} + n\ZZ$.

We examine the left side of \eqref{id1-eq}.
Suppose $\pi \in \cA(y)$ and $i,j \notin \{p,q\}+n\ZZ$ are such that $i<p$ and $j<q$ and $\pi \lessdot \pi t_{ip}$ and $\pi \lessdot \pi t_{jq}$.
Theorem~\ref{tau-thm1} implies that
$\pi t_{ip}$ (respectively, $\pi t_{jq}$) is an atom for some $z \in \tI_n$ if and only if
 $y \neq z = \tau^n_{ip}(y)$ (respectively, $y \neq z= \tau^n_{jq}(y)$), in which case 
 $z \in \iPhi^-_p(y)$ by Theorem~\ref{tau-thm2} and Lemma~\ref{phi-lem}(c).
 Conversely, if $z \in \iPhi^-_p(y)$, then it follows by 
Lemmas~\ref{ijkl-lem} and \ref{atom-bij-lem} that there exists a unique integer $i \notin \{p,q\} + n\ZZ$ with either $i < p$ and $\pi\lessdot \pi t_{ip}  \in \cA(z)$
or $i < q$ and $\pi \lessdot \pi t_{iq} \in \cA(z)$.
We conclude that   the left side of \eqref{id1-eq} can be rewritten as
$
\sum_{z \in \iPhi^-_p(y)} \iF_z + \sum_{(\pi,i,j) \in \cN^-} \tF_{\pi t_{ij}}
$
where 
$\cN^-$ is the set of triples $(\pi,i,j) \in \cA(y) \times \ZZ \times \ZZ$ 
with $i < j \in \{p,q\}$ and $i \notin \{p,q\} + n \ZZ$ and
$\pi\lessdot \pi t_{ij}$ and $\tau^n_{ij}(y) = y$.
By a symmetric argument, the right side of \eqref{id1-eq} is  
$
\sum_{z \in \iPhi^+_q(y)} \iF_z + 
 \sum_{(\pi,i,j) \in \cN^+} \tF_{\pi t_{ij}}
$
where $\cN^+$ is the set of triples $(\pi,i,j) \in \cA(y) \times \ZZ \times \ZZ$ 
with $j>i\in \{p,q\}$ and $j \notin \{p,q\} + n \ZZ$ and
$\pi\lessdot \pi t_{ij}$ and $\tau^n_{ij}(y) = y$.
It therefore suffices to show that $\sum_{(\pi,i,j) \in \cN^-} \tF_{\pi t_{ij}} = \sum_{(\pi,i,j) \in \cN^+} \tF_{\pi t_{ij}}$.
In fact, we will show that 
these sums involve the same set of Stanley symmetric functions $\tF_\sigma$.

Let 
$\cN \supset \cN^\pm$ be the set of triples $(\pi,i,j) \in \cA(y)\times \ZZ \times \ZZ$
with
 $i< j \not\equiv i \modu n)$ and
$\pi \lessdot \pi t_{ij}$ and
 $\tau^n_{ij}(y) =y$.
 Given $(\pi,i,j) \in \cN$, let 
 $k<l \not \equiv  k\modu n)$ be as in Theorem~\ref{tau-thm3}
 so that $ \pi \neq \pi t_{ij} t_{kl} \in \cA(y)$
 and $\tau^n_{kl}(y)=y$,
 and let
  $\theta(\pi,i,j) = (\pi t_{ij} t_{kl},k,l).$
This evidently defines a map $\theta : \cN \to \cN$.
It is a straightforward but tedious exercise to
work through the cases in Proposition~\ref{tech-prop} to check
 that 
$\theta$ is actually an involution; the details are left to the reader.
Given this property, it follows from Theorem~\ref{tau-thm3}
that $\theta$ restricts to a bijection $\cN^- \to \cN^+$.
Since  $\theta(\pi_1,i,j) = (\pi_2,k,l)$ implies that $\pi_1 t_{ij} = \pi_2 t_{kl}$,
we deduce that $\sum_{(\pi,i,j) \in \cN^-} \tF_{\pi t_{ij}} = \sum_{(\pi,i,j) \in \cN^+} \tF_{\pi t_{ij}}$
as needed.
\end{proof}

 \begin{example}
Suppose $n=4$ and 
\[y =
\begin{tikzpicture}[baseline=0,scale=0.15,label/.style={postaction={ decorate,transform shape,decoration={ markings, mark=at position .5 with \node #1;}}}]
{
\draw[fill,lightgray] (0,0) circle (4.0);
\node at (2.4492935982947064e-16, 4.0) {$_\bullet$};
\node at (1.8369701987210297e-16, 3.0) {$_{1}$};
\node at (4.0, 0.0) {$_\bullet$};
\node at (3.0, 0.0) {$_{2}$};
\node at (2.4492935982947064e-16, -4.0) {$_\bullet$};
\node at (1.8369701987210297e-16, -3.0) {$_{3}$};
\node at (-4.0, -4.898587196589413e-16) {$_\bullet$};
\node at (-3.0, -3.6739403974420594e-16) {$_{4}$};
\draw [-,>=latex,domain=0:100,samples=100] plot ({(4.0 + 2.0 * sin(180 * (0.5 + asin(-0.9 + 1.8 * (\x / 100)) / asin(0.9) / 2))) * cos(90 - (180.0 + \x * 4.5))}, {(4.0 + 2.0 * sin(180 * (0.5 + asin(-0.9 + 1.8 * (\x / 100)) / asin(0.9) / 2))) * sin(90 - (180.0 + \x * 4.5))});
}
\end{tikzpicture}
= t_{3,8} = [1, 2, 8, -1] \in \tI_4.
\]
Setting $p=q=2$, we have
\[ \ba 
\iPhi^-_2(y) &= \left\{
\begin{tikzpicture}[baseline=0,scale=0.15,label/.style={postaction={ decorate,transform shape,decoration={ markings, mark=at position .5 with \node #1;}}}]
{
\draw[fill,lightgray] (0,0) circle (4.0);
\node at (2.4492935982947064e-16, 4.0) {$_\bullet$};
\node at (1.8369701987210297e-16, 3.0) {$_{1}$};
\node at (4.0, 0.0) {$_\bullet$};
\node at (3.0, 0.0) {$_{2}$};
\node at (2.4492935982947064e-16, -4.0) {$_\bullet$};
\node at (1.8369701987210297e-16, -3.0) {$_{3}$};
\node at (-4.0, -4.898587196589413e-16) {$_\bullet$};
\node at (-3.0, -3.6739403974420594e-16) {$_{4}$};
\draw [-,>=latex,domain=0:100,samples=100] plot ({(4.0 + 0.0 * sin(180 * (0.5 + asin(-0.9 + 1.8 * (\x / 100)) / asin(0.9) / 2))) * cos(90 - (0.0 + \x * 0.9))}, {(4.0 + 0.0 * sin(180 * (0.5 + asin(-0.9 + 1.8 * (\x / 100)) / asin(0.9) / 2))) * sin(90 - (0.0 + \x * 0.9))});
\draw [-,>=latex,domain=0:100,samples=100] plot ({(4.0 + 2.0 * sin(180 * (0.5 + asin(-0.9 + 1.8 * (\x / 100)) / asin(0.9) / 2))) * cos(90 - (180.0 + \x * 4.5))}, {(4.0 + 2.0 * sin(180 * (0.5 + asin(-0.9 + 1.8 * (\x / 100)) / asin(0.9) / 2))) * sin(90 - (180.0 + \x * 4.5))});
}
\end{tikzpicture}
\right\} 
=\{t_{1,2} t_{3,8}\} 
=\left\{ \tau^4_{1,2}(y)  \right\} ,  
\\
\iPhi^+_2(y) &= 
\left\{
\begin{tikzpicture}[baseline=0,scale=0.15,label/.style={postaction={ decorate,transform shape,decoration={ markings, mark=at position .5 with \node #1;}}}]
{
\draw[fill,lightgray] (0,0) circle (4.0);
\node at (2.4492935982947064e-16, 4.0) {$_\bullet$};
\node at (1.8369701987210297e-16, 3.0) {$_{1}$};
\node at (4.0, 0.0) {$_\bullet$};
\node at (3.0, 0.0) {$_{2}$};
\node at (2.4492935982947064e-16, -4.0) {$_\bullet$};
\node at (1.8369701987210297e-16, -3.0) {$_{3}$};
\node at (-4.0, -4.898587196589413e-16) {$_\bullet$};
\node at (-3.0, -3.6739403974420594e-16) {$_{4}$};
\draw [-,>=latex,domain=0:100,samples=100] plot ({(4.0 + 2.0 * sin(180 * (0.5 + asin(-0.9 + 1.8 * (\x / 100)) / asin(0.9) / 2))) * cos(90 - (90.0 + \x * 5.4))}, {(4.0 + 2.0 * sin(180 * (0.5 + asin(-0.9 + 1.8 * (\x / 100)) / asin(0.9) / 2))) * sin(90 - (90.0 + \x * 5.4))});
}
\end{tikzpicture},\
\begin{tikzpicture}[baseline=0,scale=0.15,label/.style={postaction={ decorate,transform shape,decoration={ markings, mark=at position .5 with \node #1;}}}]
{
\draw[fill,lightgray] (0,0) circle (4.0);
\node at (2.4492935982947064e-16, 4.0) {$_\bullet$};
\node at (1.8369701987210297e-16, 3.0) {$_{1}$};
\node at (4.0, 0.0) {$_\bullet$};
\node at (3.0, 0.0) {$_{2}$};
\node at (2.4492935982947064e-16, -4.0) {$_\bullet$};
\node at (1.8369701987210297e-16, -3.0) {$_{3}$};
\node at (-4.0, -4.898587196589413e-16) {$_\bullet$};
\node at (-3.0, -3.6739403974420594e-16) {$_{4}$};
\draw [-,>=latex,domain=0:100,samples=100,densely dotted] plot ({(4.0 + 2.0 * sin(180 * (0.5 + asin(-0.9 + 1.8 * (\x / 100)) / asin(0.9) / 2))) * cos(90 - (90.0 + \x * 2.7))}, {(4.0 + 2.0 * sin(180 * (0.5 + asin(-0.9 + 1.8 * (\x / 100)) / asin(0.9) / 2))) * sin(90 - (90.0 + \x * 2.7))});
\draw [-,>=latex,domain=0:100,samples=100] plot ({(4.0 + 4.0 * sin(180 * (0.5 + asin(-0.9 + 1.8 * (\x / 100)) / asin(0.9) / 2))) * cos(90 - (180.0 + \x * 4.5))}, {(4.0 + 4.0 * sin(180 * (0.5 + asin(-0.9 + 1.8 * (\x / 100)) / asin(0.9) / 2))) * sin(90 - (180.0 + \x * 4.5))});
}
\end{tikzpicture}
\right\}
=\{t_{2,8},\ t_{2,5} t_{3,8}\} 
= \left\{ \tau^4_{2,3}(y),\ \tau^4_{2,5}(y)\right \},
\ea
\]
and 
$\iF_{[2, 1, 8, -1]}   = \iF_{[1, 8, 3, -2]}  +  \iF_{[-2, 5, 8, -1]}  
=
\tF_{1^5} + \tF_{21^3} + \tF_{2^2 1} + \tF_{3 1^2} + \tF_{3 2}
.$
 \end{example}

 \begin{example}
Suppose $n=5$ and 
\[y =
\begin{tikzpicture}[baseline=0,scale=0.15,label/.style={postaction={ decorate,transform shape,decoration={ markings, mark=at position .5 with \node #1;}}}]
{
\draw[fill,lightgray] (0,0) circle (4.0);
\node at (2.4492935982947064e-16, 4.0) {$_\bullet$};
\node at (1.8369701987210297e-16, 3.0) {$_{1}$};
\node at (3.804226065180614, 1.2360679774997896) {$_\bullet$};
\node at (2.8531695488854605, 0.9270509831248421) {$_{2}$};
\node at (2.351141009169893, -3.2360679774997894) {$_\bullet$};
\node at (1.7633557568774196, -2.427050983124842) {$_{3}$};
\node at (-2.351141009169892, -3.23606797749979) {$_\bullet$};
\node at (-1.7633557568774192, -2.4270509831248424) {$_{4}$};
\node at (-3.8042260651806146, 1.2360679774997891) {$_\bullet$};
\node at (-2.853169548885461, 0.9270509831248419) {$_{5}$};
\draw [-,>=latex,domain=0:100,samples=100,densely dotted] plot ({(4.0 + 4.0 * sin(180 * (0.5 + asin(-0.9 + 1.8 * (\x / 100)) / asin(0.9) / 2))) * cos(90 - (72.0 + \x * 4.32))}, {(4.0 + 4.0 * sin(180 * (0.5 + asin(-0.9 + 1.8 * (\x / 100)) / asin(0.9) / 2))) * sin(90 - (72.0 + \x * 4.32))});
\draw [-,>=latex,domain=0:100,samples=100] plot ({(4.0 + 4.0 * sin(180 * (0.5 + asin(-0.9 + 1.8 * (\x / 100)) / asin(0.9) / 2))) * cos(90 - (216.0 + \x * 4.32))}, {(4.0 + 4.0 * sin(180 * (0.5 + asin(-0.9 + 1.8 * (\x / 100)) / asin(0.9) / 2))) * sin(90 - (216.0 + \x * 4.32))});
}
\end{tikzpicture}
= t_{2,8}t_{4,10} = [1, 8, -3, 10, -1] \in \tI_5.
\]
Setting $(p,q)=(2,8)$, we have
\[ \ba 
\iPhi^-_2(y) &= \left\{
\begin{tikzpicture}[baseline=0,scale=0.15,label/.style={postaction={ decorate,transform shape,decoration={ markings, mark=at position .5 with \node #1;}}}]
{
\draw[fill,lightgray] (0,0) circle (4.0);
\node at (2.4492935982947064e-16, 4.0) {$_\bullet$};
\node at (1.8369701987210297e-16, 3.0) {$_{1}$};
\node at (3.804226065180614, 1.2360679774997896) {$_\bullet$};
\node at (2.8531695488854605, 0.9270509831248421) {$_{2}$};
\node at (2.351141009169893, -3.2360679774997894) {$_\bullet$};
\node at (1.7633557568774196, -2.427050983124842) {$_{3}$};
\node at (-2.351141009169892, -3.23606797749979) {$_\bullet$};
\node at (-1.7633557568774192, -2.4270509831248424) {$_{4}$};
\node at (-3.8042260651806146, 1.2360679774997891) {$_\bullet$};
\node at (-2.853169548885461, 0.9270509831248419) {$_{5}$};
\draw [-,>=latex,domain=0:100,samples=100,densely dotted] plot ({(4.0 + 2.0 * sin(180 * (0.5 + asin(-0.9 + 1.8 * (\x / 100)) / asin(0.9) / 2))) * cos(90 - (72.0 + \x * 2.16))}, {(4.0 + 2.0 * sin(180 * (0.5 + asin(-0.9 + 1.8 * (\x / 100)) / asin(0.9) / 2))) * sin(90 - (72.0 + \x * 2.16))});
\draw [-,>=latex,domain=0:100,samples=100] plot ({(4.0 + 4.0 * sin(180 * (0.5 + asin(-0.9 + 1.8 * (\x / 100)) / asin(0.9) / 2))) * cos(90 - (216.0 + \x * 6.48))}, {(4.0 + 4.0 * sin(180 * (0.5 + asin(-0.9 + 1.8 * (\x / 100)) / asin(0.9) / 2))) * sin(90 - (216.0 + \x * 6.48))});
}
\end{tikzpicture},
\
\begin{tikzpicture}[baseline=0,scale=0.15,label/.style={postaction={ decorate,transform shape,decoration={ markings, mark=at position .5 with \node #1;}}}]
{
\draw[fill,lightgray] (0,0) circle (4.0);
\node at (2.4492935982947064e-16, 4.0) {$_\bullet$};
\node at (1.8369701987210297e-16, 3.0) {$_{1}$};
\node at (3.804226065180614, 1.2360679774997896) {$_\bullet$};
\node at (2.8531695488854605, 0.9270509831248421) {$_{2}$};
\node at (2.351141009169893, -3.2360679774997894) {$_\bullet$};
\node at (1.7633557568774196, -2.427050983124842) {$_{3}$};
\node at (-2.351141009169892, -3.23606797749979) {$_\bullet$};
\node at (-1.7633557568774192, -2.4270509831248424) {$_{4}$};
\node at (-3.8042260651806146, 1.2360679774997891) {$_\bullet$};
\node at (-2.853169548885461, 0.9270509831248419) {$_{5}$};
\draw [-,>=latex,domain=0:100,samples=100] plot ({(4.0 + 4.0 * sin(180 * (0.5 + asin(-0.9 + 1.8 * (\x / 100)) / asin(0.9) / 2))) * cos(90 - (0.0 + \x * 5.04))}, {(4.0 + 4.0 * sin(180 * (0.5 + asin(-0.9 + 1.8 * (\x / 100)) / asin(0.9) / 2))) * sin(90 - (0.0 + \x * 5.04))});
\draw [-,>=latex,domain=0:100,samples=100,densely dotted] plot ({(4.0 + 2.0 * sin(180 * (0.5 + asin(-0.9 + 1.8 * (\x / 100)) / asin(0.9) / 2))) * cos(90 - (216.0 + \x * 4.32))}, {(4.0 + 2.0 * sin(180 * (0.5 + asin(-0.9 + 1.8 * (\x / 100)) / asin(0.9) / 2))) * sin(90 - (216.0 + \x * 4.32))});
}
\end{tikzpicture}
\right\} 
=\{t_{2,5} t_{4,13},\ t_{1,8} t_{4,10}\} 
=\left\{ \tau^5_{-1,2}(y),\ \tau^5_{1,2}(y) \right\} ,  
\\
\iPhi^+_8(y) &= 
\left\{
\begin{tikzpicture}[baseline=0,scale=0.15,label/.style={postaction={ decorate,transform shape,decoration={ markings, mark=at position .5 with \node #1;}}}]
{
\draw[fill,lightgray] (0,0) circle (4.0);
\node at (2.4492935982947064e-16, 4.0) {$_\bullet$};
\node at (1.8369701987210297e-16, 3.0) {$_{1}$};
\node at (3.804226065180614, 1.2360679774997896) {$_\bullet$};
\node at (2.8531695488854605, 0.9270509831248421) {$_{2}$};
\node at (2.351141009169893, -3.2360679774997894) {$_\bullet$};
\node at (1.7633557568774196, -2.427050983124842) {$_{3}$};
\node at (-2.351141009169892, -3.23606797749979) {$_\bullet$};
\node at (-1.7633557568774192, -2.4270509831248424) {$_{4}$};
\node at (-3.8042260651806146, 1.2360679774997891) {$_\bullet$};
\node at (-2.853169548885461, 0.9270509831248419) {$_{5}$};
\draw [-,>=latex,domain=0:100,samples=100,densely dotted] plot ({(4.0 + 4.0 * sin(180 * (0.5 + asin(-0.9 + 1.8 * (\x / 100)) / asin(0.9) / 2))) * cos(90 - (72.0 + \x * 5.04))}, {(4.0 + 4.0 * sin(180 * (0.5 + asin(-0.9 + 1.8 * (\x / 100)) / asin(0.9) / 2))) * sin(90 - (72.0 + \x * 5.04))});
\draw [-,>=latex,domain=0:100,samples=100] plot ({(4.0 + 4.0 * sin(180 * (0.5 + asin(-0.9 + 1.8 * (\x / 100)) / asin(0.9) / 2))) * cos(90 - (144.0 + \x * 5.04))}, {(4.0 + 4.0 * sin(180 * (0.5 + asin(-0.9 + 1.8 * (\x / 100)) / asin(0.9) / 2))) * sin(90 - (144.0 + \x * 5.04))});
}
\end{tikzpicture},
\
\begin{tikzpicture}[baseline=0,scale=0.15,label/.style={postaction={ decorate,transform shape,decoration={ markings, mark=at position .5 with \node #1;}}}]
{
\draw[fill,lightgray] (0,0) circle (4.0);
\node at (2.4492935982947064e-16, 4.0) {$_\bullet$};
\node at (1.8369701987210297e-16, 3.0) {$_{1}$};
\node at (3.804226065180614, 1.2360679774997896) {$_\bullet$};
\node at (2.8531695488854605, 0.9270509831248421) {$_{2}$};
\node at (2.351141009169893, -3.2360679774997894) {$_\bullet$};
\node at (1.7633557568774196, -2.427050983124842) {$_{3}$};
\node at (-2.351141009169892, -3.23606797749979) {$_\bullet$};
\node at (-1.7633557568774192, -2.4270509831248424) {$_{4}$};
\node at (-3.8042260651806146, 1.2360679774997891) {$_\bullet$};
\node at (-2.853169548885461, 0.9270509831248419) {$_{5}$};
\draw [-,>=latex,domain=0:100,samples=100] plot ({(4.0 + 4.0 * sin(180 * (0.5 + asin(-0.9 + 1.8 * (\x / 100)) / asin(0.9) / 2))) * cos(90 - (72.0 + \x * 5.76))}, {(4.0 + 4.0 * sin(180 * (0.5 + asin(-0.9 + 1.8 * (\x / 100)) / asin(0.9) / 2))) * sin(90 - (72.0 + \x * 5.76))});
\draw [-,>=latex,domain=0:100,samples=100,densely dotted] plot ({(4.0 + 2.0 * sin(180 * (0.5 + asin(-0.9 + 1.8 * (\x / 100)) / asin(0.9) / 2))) * cos(90 - (216.0 + \x * 2.88))}, {(4.0 + 2.0 * sin(180 * (0.5 + asin(-0.9 + 1.8 * (\x / 100)) / asin(0.9) / 2))) * sin(90 - (216.0 + \x * 2.88))});
}
\end{tikzpicture}
\right\}
=\{t_{2,9} t_{3,10},\ t_{2,10} t_{4,8}\} 
= \left\{ \tau^5_{8,9}(y),\  \tau^5_{8,10}(y)\right \},
\ea
\]
and 
$\iF_{[1,5,-6,13,2]} +\iF_{[8,2,-4,10,-1]}   = \iF_{[1,9,10,-3,-2]}  +  \iF_{[1,10,-1,8,-3]}  
=
\tF_{21^7} + \tF_{2^21^5} + \tF_{2^31^3} + 2\tF_{2^41} + \tF_{31^6} + \tF_{321^4} + 3\tF_{32^21^2} + \tF_{32^3} + \tF_{3^21^3} + 2\tF_{3^221} + \tF_{3^3} + \tF_{421^3} + \tF_{42^21} + \tF_{431^2} + \tF_{432}
.$
 \end{example}

Let $\cR(\pi)$ denote the set of reduced expressions for $\pi \in \tS_n$,
and define $\iR(z) = \bigsqcup_{\pi \in \cA(z)} \cR(\pi)$
for $z \in \tI_n$.
Elements of $\iR(z)$ are called \emph{involution words} in \cite{HMP1,HMP2}. The same sequences, 
read in reverse order, are referred to as \emph{reduced $\underline S$-expressions} in \cite{HH,H2} and \emph{reduced $I_*$-expressions} in \cite{HuZhang,KL1}.

\begin{corollary}
 If $y \in \tI_n$, $p,q \in \ZZ$, and $p\leq q = y(p)$, then 
$\sum_{z \in \iPhi^-_p(y)} |\iR(z)| = \sum_{z \in \iPhi^+_q(y)} |\iR(z)|.$
\end{corollary}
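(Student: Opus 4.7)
The plan is to derive this as a routine coefficient-extraction consequence of Theorem~\ref{main-thm}. The key observation is that $|\cR(\pi)|$ can be read off from $\tF_\pi$ as the coefficient of a suitable monomial. Specifically, I would verify that for any $\pi \in \tS_n$ with $N := \ell(\pi)$, the coefficient of $x_1 x_2 \cdots x_N$ in $\tF_\pi$ equals $|\cR(\pi)|$. This is immediate from Definition~\ref{lam-def}: the relevant monomial corresponds to length-additive factorizations $\pi = \pi^1 \pi^2 \cdots \pi^N$ in which each factor $\pi^i$ has length exactly $1$, and each length-$1$ element $s_i$ is trivially cyclically decreasing, so such factorizations are in bijection with reduced expressions $\pi = s_{i_1} s_{i_2} \cdots s_{i_N}$.

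Next I would package this for involutions. Since every atom $\pi \in \cA(z)$ has $\ell(\pi) = \ellhat(z)$, the preceding observation together with $\iF_z = \sum_{\pi \in \cA(z)} \tF_\pi$ yields
\[
|\iR(z)| \;=\; \sum_{\pi \in \cA(z)} |\cR(\pi)| \;=\; [x_1 x_2 \cdots x_{\ellhat(z)}]\, \iF_z,
\]
where $[x_1 \cdots x_N] f$ denotes the coefficient of the indicated monomial in $f$. Equivalently, this is the coefficient of $m_{1^{\ellhat(z)}}$ when $\iF_z$ is expanded in the monomial basis, since among partitions $\lambda$ of $N$ only $\lambda = 1^N$ contributes to this monomial.

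Finally, I would apply this extraction to Theorem~\ref{main-thm}. By Theorem~\ref{tau-thm2}, every $z$ appearing in $\iPhi^-_p(y)$ or $\iPhi^+_q(y)$ satisfies $\ellhat(z) = \ellhat(y) + 1$, so both sides of the identity $\sum_{z \in \iPhi^-_p(y)} \iF_z = \sum_{z \in \iPhi^+_q(y)} \iF_z$ are homogeneous symmetric functions of the common degree $N := \ellhat(y) + 1$. Setting this coefficient of $x_1 x_2 \cdots x_N$ equal on the two sides yields the desired identity
\[
\sum_{z \in \iPhi^-_p(y)} |\iR(z)| \;=\; \sum_{z \in \iPhi^+_q(y)} |\iR(z)|.
\]
There is no substantive obstacle here; the corollary is purely a matter of specializing Theorem~\ref{main-thm} to a single coefficient, and all the work is contained in that theorem.
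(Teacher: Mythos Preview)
Your proposal is correct and follows essentially the same approach as the paper: both argue that $|\iR(z)|$ is the coefficient of a square-free monomial of degree $\ellhat(z)$ in $\iF_z$, and then extract this coefficient from the identity in Theorem~\ref{main-thm}. Your version is slightly more explicit in invoking Theorem~\ref{tau-thm2} to ensure all summands share the common degree $\ellhat(y)+1$, but the idea is the same.
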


\begin{proof}
The size of $\cR(\pi)$ is the coefficient of each square-free monomial in $\tF_\pi$,
so $|\iR(z)|$ is the coefficient of each square-free monomial in $\iF_z$,
and the corollary follows from Theorem~\ref{main-thm}.
\end{proof}

\begin{remark}
It is possible to give a bijective proof of the preceding identity
using an affine generalization of the ``involution Little map'' 
described in \cite[\S3.3]{HMP3}. 
We omit this material since the arguments for the affine case
are essentially unchanged, except that one substitutes
Lam and Shimozono's affine bumping algorithm \cite{LamShim}
for the classical Little map in a few obvious places.
In addition, since we do not yet have a good analogue of wiring diagrams for 
the elements of $\iR(z)$,
 the involution Little map is unsatisfyingly nonconstructive.
 Finding a more efficient way of representing involution words and
 computing the involution Little map is an open problem of interest.
\end{remark}

\section{Proof of covering and toggling properties}\label{tau-sect1}

This section contains the proofs of our main technical results, Theorems~\ref{tau-thm1}
and \ref{tau-thm3}.

\subsection{Self-contained arguments}

We split the proof of Theorem~\ref{tau-thm1}
across four lemmas in this and the next subsection.
In each lemma, we adopt the following hypothesis:

\begin{hypothesis}\label{lem-hyp}
Let 
$y \in \tI_n$ and $\pi \in \cA(y)$,
fix integers  $i<j \not \equiv i \modu n)$
and define $z = \tau^n_{ij}(y)$,
and assume that $\pi \lessdot \pi t_{ij}$ and $y\neq z$.
\end{hypothesis}

\begin{lemma}\label{tau-thm-lem1}
Assume the conditions in Hypothesis~\ref{lem-hyp}.
In addition, suppose that $i \equiv  y(j) \modu n)$ and $y(i) < i < j < y(j)$. Then $\pi t_{ij} \in \cA(z)$.
\end{lemma}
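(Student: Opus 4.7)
The plan is to verify directly that $\sigma := \pi t_{ij}$ satisfies the three local conditions of Theorem~\ref{local-thm} relative to $z$, which immediately yields $\sigma \in \cA(z)$. First I would unpack the hypotheses. Since $y(i) < i < j < y(j)$ are four distinct integers with $y(i), y(j) \notin \{i,j\}$, one has $\cD_{ij}(y) = \diagrambADc$, and Definition~\ref{tau-def} gives $z = t_{ij}\, y \, t_{ij}$. Writing $m \ge 1$ for the integer with $y(j) = i + mn$ (so $y(i) = j - mn$ and necessarily $mn > j - i$), a direct computation shows that
\[
z(i+kn) = j + (k+m)n \qquand z(j+kn) = i + (k-m)n \qquad\text{for all } k \in \ZZ,
\]
with $z$ agreeing with $y$ outside the classes of $i$ and $j$ modulo $n$. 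Consequently, $\Cyc(z)$ and $\Cyc(y)$ share all cycles not meeting these two classes, while inside them the cycles get re-paired: $\Cyc(y)$ contains $\{(j+kn,\, i+(k+m)n) : k \in \ZZ\}$ and $\Cyc(z)$ contains $\{(i+kn,\, j+(k+m)n) : k \in \ZZ\}$.

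Since $\sigma$ satisfies $\sigma(i+kn) = \pi(j+kn)$, $\sigma(j+kn) = \pi(i+kn)$, and $\sigma = \pi$ elsewhere, condition~(1) of Theorem~\ref{local-thm} for $(\sigma,z)$ on a new cycle $(i+kn, j+(k+m)n)$ unpacks to $\pi(i+(k+m)n) < \pi(j+kn)$, which is precisely condition~(1) for $(\pi,y)$ on the paired $y$-cycle $(j+kn, i+(k+m)n)$. For conditions~(2) and~(3) I would case-split according to how many of the two chosen cycles lie in the classes of $i$ and $j$: when both are new or both are old, the reduction to the corresponding condition for $(\pi,y)$ is an essentially mechanical consequence of the cycle bijection above.

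The subtle case is when exactly one of the two cycles is new, because the corresponding $y$-cycle has span $mn - (j-i)$ while the new $z$-cycle has span $mn + (j-i)$, so an old cycle $(a',b') \in \Cyc(y)$ can be nested relative to the $y$-side yet crossing or separated relative to the $z$-side. To handle this, I would combine three ingredients: (i) by Lemma~\ref{lem-2} and affine periodicity, the hypothesis $\pi \lessdot \pi t_{ij}$ forces $\pi(e) \notin (\pi(i+kn), \pi(j+kn))$ whenever $i+kn < e < j+kn$ with $e \not\equiv i,j \pmod n$; (ii) condition~(3) of Theorem~\ref{local-thm} for $(\pi,y)$ on the $y$-cycle $(j-mn+kn, i+kn)$ together with $(a',b')$ eliminates one branch of that dichotomy by forcing $\pi(b') > \pi(i+kn)$ (after also invoking condition~(1)); and (iii) condition~(2) for $(\pi,y)$ on the nested pair $(j+kn, i+(k+m)n)$ and $(a',b')$ then delivers the inequality needed for $(\sigma,z)$. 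The main obstacle is this case analysis: exhaustively considering the possible positions of $a'$ and $b'$ relative to the quadruple $\{j-mn+kn,\, i+kn,\, j+kn,\, i+mn+kn\}$ and verifying in each sub-case that the above combination of conditions closes the argument.
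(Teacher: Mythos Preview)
Your proposal takes essentially the same approach as the paper: decompose $\Cyc(z)$ into the new cycles through $\{i,j\}+n\ZZ$ and the old cycles shared with $\Cyc(y)$, then verify the three conditions of Theorem~\ref{local-thm} for $\sigma=\pi t_{ij}$ by combining Lemma~\ref{lem-2} with the atom hypotheses on $\pi$, case-splitting on how many of the two cycles are new. The only difference is that the paper first argues, from Lemma~\ref{lem-2} together with the chain $\pi(i_m)<\pi(j_m)<\pi(i_{m+1})$, that your parameter $m$ must equal $1$, which reduces the mixed case to its four explicit conditions (1)--(4); your treatment with general $m$ follows the same template with slightly more involved indexing.
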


\begin{proof}
Let $i_0 = i$ and $j_0 = y(i)$ and define $i_m = i_0 + mn$ and $j_m =j_0 + mn$ for $m \in \ZZ$.
Let $A = \{ (j_m, i_m) : m \in \ZZ\}$, $B = \{ (i_m,j_{m+2}) : m \in \ZZ\}$, and $C = \{ (a,b) \in \Cyc(y) : a,b \notin \{i,j\} + n\ZZ\}$.
Then $\Cyc(y) = A \sqcup C$ and $\Cyc(z) = B \sqcup C$.
Theorem~\ref{local-thm} implies that 
\[\pi(i_m) < \pi(j_m) < \pi(i_{m+1}) < \pi(j_{m+1})\qquad\text{for all $m \in \ZZ$.}\]
Let $t=t_{ij}$.
Since $\pi \lessdot \pi t$,
Lemma~\ref{lem-2} implies that $j = j_1$ and $y(j) = i_1$. We have
\[\pi t(j_{m+1}) < \pi t(i_{m-1}) < \pi t(j_{m+2}) < \pi t(i_m)\qquad\text{for all $m \in \ZZ$}\]
and $\pi(a) = \pi t(a)$ and $y(a) = z(a)$ for all integers $a \notin \{i,j\} +n \ZZ$.
The conditions in Theorem~\ref{local-thm} therefore hold for $\pi t$ relative to $z$
for all cycles $(a,b),(a',b') \in B$ and all cycles $(a,b),(a',b') \in C$.
Hence, to show that $\pi t \in \cA(z)$, it suffices by Theorem~\ref{local-thm} to check 
for all $(a,b) \in C$ that:
\ben
\item[(1)] If $i <a \leq b < j+n$ then we do not have $\pi(i+n) < \pi(a) < \pi(j)$ or $\pi(i+n) < \pi(b) < \pi(j)$.

\item[(2)] If $a < i < j+n < b$ then we do not have $\pi(b) < \pi(i+n) < \pi(a) $ or $\pi(b) < \pi(j) < \pi(a)$.

\item[(3)] If $i < a$ and $j+n< b$ then $\pi(j) < \pi(b)$.

\item[(4)] If $a < i$ and $b < j+n$ then $\pi(a) < \pi(i+n)$.
\een
Since $(j-n,i),(j,i+n) \in \Cyc(y)$ and $\pi \in \cA(y)$,
Theorem~\ref{local-thm} has the following implications:
\begin{itemize}
    \item If $i<a\leq b<j<i+n<j+n$ then $\pi(i)<\pi(b)\leq \pi(a)<\pi(j)$.
    \item If $i<a<j<b<i+n<j+n$ then  $\pi(i)<\pi(b)<\pi(a)<\pi(j)$.
    \item If $i<a<j<i+n<b<j+n$ then either $\pi(i)<\pi(a)<\pi(j)$ or $\pi(i+n)<\pi(b)<\pi(j+n)$.
    \item If $i<j<a\leq b<i+n<j+n$ then the conditions in (1) must hold.
    \item If $i<j<a<i+n<b < j+n$ then $\pi(i+n)<\pi(b)< \pi(a)<\pi(j+n)$.
    \item If $i<j<i+n<a\leq b<j+n$ then $\pi(i+n)<\pi(b)\leq \pi(a)<\pi(j+n)$.
\end{itemize}
In the fourth case the desired conditions hold, while all of the other cases contradict Lemma~\ref{lem-2}
since $\pi \lessdot \pi t_{ij}$. We conclude that property (1) holds.

Property (2) follows from Theorem~\ref{local-thm} since $\pi \in \cA(y)$ and
if $a<i<j+n<b$ then $a<j<i+n<b$ and $y(j)=i+n$.
To check property (3), suppose $i<a$ and $j+n<b$. Theorem~\ref{local-thm} implies that 
$\pi(i)<\pi(b)\leq\pi(a)$, that if $j<a$ then $\pi(j)<\pi(b)$,
and that if $i<a<j$ then we do not have $\pi(b) < \pi(j) <\pi(a)$.
Thus, either $\pi(j)<\pi(b)$ or both $i<a<j$ and $\pi(i)<\pi(b)\leq\pi(a)<\pi(j)$.
The latter conditions contradict Lemma~\ref{lem-2},
so $\pi(j)<\pi(b)$ as desired.
Property (4) holds by a symmetric argument.
\end{proof}

\begin{lemma}\label{tau-thm-lem2}
Assume the conditions in Hypothesis~\ref{lem-hyp}.
In addition, suppose that $i \equiv  y(j) \modu n)$ and $i < y(i) < y(j) < j$. Then $\pi t_{ij} \in \cA(z)$.
\end{lemma}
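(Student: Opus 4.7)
The plan is to verify that $\pi t_{ij} \in \cA(z)$ by checking the three local conditions of Theorem~\ref{local-thm}, in parallel with the proof of Lemma~\ref{tau-thm-lem1}. First I would unpack the hypothesis. Since $i \equiv y(j) \pmod{n}$ and $i < y(i) < y(j) < j$, it follows that $y(j) = i+n$ and, by $n$-periodicity, $y(i) = j-n$; in particular $n < j-i < 2n$ and the ordering on $\ZZ$ is $\ldots < i < j-n < i+n < j < i+2n < j+n < \ldots$. The graph $\cD_{ij}(y)$ is $\diagramBadC$, so Definition~\ref{tau-def} gives $z = t_{ij}\overline{y}$. A direct check shows that $\Cyc(y) = A \sqcup C$ and $\Cyc(z) = B \sqcup C$, where $A = \{(i+mn,\, j+(m-1)n) : m \in \ZZ\}$, $B = \{(i+mn,\, j+mn) : m \in \ZZ\}$, and $C$ is a set of cycles disjoint from $\{i,j\} + n\ZZ$.

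Next I would extract the key inequalities. Theorem~\ref{local-thm}(1) applied to $\pi \in \cA(y)$ on the pairs $(i,\, j-n)$ and $(i+n,\, j)$ in $A$ yields $\pi(j-n) < \pi(i)$ and $\pi(j) < \pi(i+n)$, while $\pi \lessdot \pi t_{ij}$ and Lemma~\ref{lem-2} give $\pi(i) < \pi(j)$. These combine to
\[
\pi(j-n) < \pi(i) < \pi(j) < \pi(i+n),
\]
so $0 < \pi(j) - \pi(i) < n$, and $n$-periodicity propagates the chain to every translate. Moreover, Lemma~\ref{lem-2} forces $\pi(e) \notin (\pi(i), \pi(j))$ for every $e \in (i,j)$ (and by translation for every $e \in (i+kn,\, j+kn)$); this ``forbidden zone'' will repeatedly rule out bad intermediate values.

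The three conditions of Theorem~\ref{local-thm} for $\pi t_{ij}$ relative to $z$ then split into three subcases according to whether a given pair of cycles in $\Cyc(z)$ lies in $B$, in $C$, or is mixed. The all-$C$ case is immediate since $\pi t_{ij}$ agrees with $\pi$ on the relevant arguments and $B$-pairs in $\Cyc(z)$ correspond via the swap to $A$-pairs in $\Cyc(y)$. The all-$B$ case collapses by periodicity: condition (2) has no valid overlap between parallel translates, and condition (3) reduces to $\pi(j) + m_1 n < \pi(i) + m_2 n$ for $m_1 < m_2$, which follows from $\pi(j) - \pi(i) < n$. In the mixed case, where one cycle is $(i+mn,\, j+mn) \in B$ and the other is $(a',b') \in C$, I would locate $(a',b')$ relative to the translates $i+kn,\, j+(k-1)n,\, i+(k+1)n,\, j+kn$ bordering the $B$-cycle, and then apply Theorem~\ref{local-thm}(2)/(3) to $\pi \in \cA(y)$ using whichever of the neighbouring $A$-pairs gives a valid overlap, combined with $(a',b') \in \Cyc(y)$ (giving $\pi(b') < \pi(a')$) and the forbidden-zone conclusion from Lemma~\ref{lem-2}.

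The main obstacle is the combinatorial bookkeeping of this mixed case. The characteristic subtlety, compared with Lemma~\ref{tau-thm-lem1}, is that each $B$-cycle $(i+mn,\, j+mn)$ straddles two distinct $A$-cycles, namely $(i+mn,\, j+(m-1)n)$ and $(i+(m+1)n,\, j+mn)$, so the desired inequality for $\pi t_{ij}$ relative to $z$ is not an immediate consequence of a single inequality for $\pi$ relative to $y$. Instead, after applying Theorem~\ref{local-thm}(3) to $\pi, y$, one obtains only one-sided information about $\pi(a')$ or $\pi(b')$, and the forbidden-zone condition (on $a'$, or on $b'-n$, or on a further translate) must be invoked to exclude the case that these values lie in a gap of the form $(\pi(i), \pi(j)) + kn$; the cycle inequality $\pi(b') < \pi(a')$ then, together with a strict lower bound of the shape $\pi(b') > \pi(i) + kn$, pins both $\pi(a')$ and $\pi(b')$ on the correct side to produce the required inequality.
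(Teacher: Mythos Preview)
Your approach is correct and matches the paper's proof: the same decomposition $\Cyc(y)=A\sqcup C$, $\Cyc(z)=B\sqcup C$, the same chain $\pi(j-n)<\pi(i)<\pi(j)<\pi(i+n)$, and the same reduction of the mixed $B$/$C$ case to a short list of conditions dispatched by Theorem~\ref{local-thm} together with the forbidden-zone argument from Lemma~\ref{lem-2}. One small correction: the claim $y(j)=i+n$ does not follow from $i\equiv y(j)\pmod n$ and $i<y(i)<y(j)<j$ alone---you need the covering relation $\pi\lessdot\pi t_{ij}$ as well (if $y(j)=i+kn$ with $k\geq 2$, then $e=y(i)+n$ satisfies $i<e<j$ and $\pi(i)<\pi(e)<\pi(j)$, contradicting Lemma~\ref{lem-2}), and the paper makes this step explicit.
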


\begin{proof}
Let $i_0 = i$ and $j_0 = y(i)$ and define $i_m = i_0 + mn$ and $j_m =j_0 + mn$ for $m \in \ZZ$.
Let $A = \{ (i_m, j_m) : m \in \ZZ\}$, $B = \{ (i_m,j_{m+1}) : m \in \ZZ\}$, and $C = \{ (a,b) \in \Cyc(y) : a,b \notin \{i,j\} + n\ZZ\}$.
Then $\Cyc(y) = A \sqcup C$ and $\Cyc(z) = B \sqcup C$.
Theorem~\ref{local-thm} implies that 
\[\pi(j_m) < \pi(i_m) < \pi(j_{m+1}) < \pi(i_{m+1})\qquad\text{for all $m \in \ZZ$.}\]
Let $t=t_{ij}$.
Since $\pi \lessdot \pi t$,
Lemma~\ref{lem-2} implies that $j = j_1$ and $y(j) = i_1$. We have
\[\pi t(i_{m-1}) < \pi t(j_{m+1}) < \pi t(i_m) < \pi t(j_{m+2})\qquad\text{for all $m \in \ZZ$}\]
and $\pi(a) = \pi t(a)$ and $y(a) = z(a)$ for all integers $a \notin \{i,j\} +n \ZZ$.
The conditions in Theorem~\ref{local-thm} therefore hold for $\pi t$ relative to $z$
for all cycles $(a,b),(a',b') \in B$ and all cycles $(a,b),(a',b') \in C$.
Hence, to show that $\pi t \in \cA(z)$, it suffices by Theorem~\ref{local-thm} to check 
for all $(a,b) \in C$ that:
\ben
\item[(1)] If $i <a \leq b < j$ then we do not have $\pi(i) < \pi(a) < \pi(j)$ or $\pi(i) < \pi(b) < \pi(j)$.

\item[(2)] If $a < i < j < b$ then we do not have $\pi(b) < \pi(i) < \pi(a) $ or $\pi(b) < \pi(j) < \pi(a)$.

\item[(3)] If $i < a$ and $j< b$ then $\pi(j) < \pi(b)$.

\item[(4)] If $a < i$ and $b < j$ then $\pi(a) < \pi(i)$.
\een
Property (1) holds by Lemma~\ref{lem-2} since $\pi \lessdot \pi t$.
Property (2) follows from Theorem~\ref{local-thm} since $\pi \in \cA(y)$ and
if $a < i < j < b$ then $a<i<y(i)<b$ and $a<y(j) < j < b$.
To check property (3), suppose $i<a$ and $j<b$. Theorem~\ref{local-thm} implies that 
$\pi(i) < \pi(b)\leq \pi(a)$, that if $y(j) < a$ then $\pi(j) < \pi(y(j)) < \pi(b)$,
and that if $i<a<y(j)$ then we do not have $\pi(b) < \pi(j) <\pi(a)$.
Thus, either $\pi(j) < \pi(b)$ or both $i<a<y(j)<j$ and $\pi(i) < \pi(b) \leq \pi(a) < \pi(j)$.
The latter conditions contradict Lemma~\ref{lem-2},
so $\pi(j) < \pi(b)$ as desired.
Property (4) holds by a symmetric argument.
\end{proof}

\begin{lemma}\label{tau-thm-lem3}
Assume the conditions in Hypothesis~\ref{lem-hyp}.
In addition, suppose that  $i \equiv  y(j) \modu n)$ and $i < y(j) < y(i) < j$. Then $\pi t_{ij} \in \cA(z)$.
\end{lemma}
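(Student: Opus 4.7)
The proof follows the same template as Lemma~\ref{tau-thm-lem2}, since the only substantive difference between the two hypotheses is the arithmetic constraint ($n < j - i < 2n$ in Lemma~\ref{tau-thm-lem2} versus $j - i > 2n$ here), which does not affect the structural argument. Concretely, set $i_0 = i$, $j_0 = y(i)$, $i_k = i_0 + kn$, $j_k = j_0 + kn$ for $k \in \ZZ$, and define $A = \{(i_k, j_k) : k \in \ZZ\}$, $B = \{(i_k, j_{k+1}) : k \in \ZZ\}$, and $C = \{(a,b) \in \Cyc(y) : a, b \notin \{i,j\} + n\ZZ\}$. Theorem~\ref{local-thm} applied to $\pi \in \cA(y)$ and the cycles in $A$ gives the chain
\[ \pi(j_k) < \pi(i_k) < \pi(j_{k+1}) < \pi(i_{k+1}) \quad\text{for all $k \in \ZZ$,} \]
and $\pi \lessdot \pi t_{ij}$ together with Lemma~\ref{lem-2} forces $j = j_1$ and $y(j) = i_1$ (otherwise some $j_\ell \in (i,j)$ would satisfy $\pi(i) < \pi(j_\ell) < \pi(j)$), so that $y(i) = j - n$, $y(j) = i + n$, and $j > i + 2n$. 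Since $\cD_{ij}(y) = \diagramCdaB$ and $i \equiv y(j) \modu n)$, Definition~\ref{tau-def} gives $z = t_{ij}\, \overline{y}$, and one checks that $\Cyc(y) = A \sqcup C$, $\Cyc(z) = B \sqcup C$, $\pi t_{ij}(i_k) = \pi(j_{k+1})$, $\pi t_{ij}(j_k) = \pi(i_{k-1})$, and $\pi t_{ij}$ agrees with $\pi$ outside $\{i,j\} + n\ZZ$.

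The conditions of Theorem~\ref{local-thm} for pairs entirely within $B$ follow from the chain above, while those for pairs within $C$ are inherited from $\pi \in \cA(y)$; so $\pi t_{ij} \in \cA(z)$ reduces to verifying, for each $(a, b) \in C$, the same four mixed conditions (1)--(4) as in the proof of Lemma~\ref{tau-thm-lem2}. Property (1) is immediate from Lemma~\ref{lem-2}. Property (2) follows from property 2 of Theorem~\ref{local-thm} applied separately to the two $y$-cycles $(i, j-n)$ and $(i+n, j)$, both of which nest inside $(a, b)$ because $j > i + 2n$. For (3) and (4)---the main obstacle---one argues exactly as in Lemma~\ref{tau-thm-lem2}: for (3), property 3 applied to the $y$-cycle $(i, j-n)$ gives $\pi(i) < \pi(b) \leq \pi(a)$, and one splits into the case $y(j) < a$ (where property 3 on $(i+n, j)$ yields $\pi(j) < \pi(i+n) < \pi(b)$) and the case $a \in (i, i+n) \subset (i,j)$ (where property 2 on the nested cycle $(i+n, j)$ gives $\pi(j) \notin (\pi(b), \pi(a))$, so that any failure of $\pi(j) < \pi(b)$ would force $\pi(i) < \pi(a) < \pi(j)$ with $a \in (i, j)$, contradicting Lemma~\ref{lem-2}). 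Condition (4) is handled by the symmetric argument with the roles of $(i, j-n)$ and $(i+n, j)$ interchanged.
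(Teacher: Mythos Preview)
Your proof is correct and follows essentially the same approach as the paper's, which is itself a near-verbatim replay of the Lemma~\ref{tau-thm-lem2} argument. In fact your chain $\pi(j_k) < \pi(i_k) < \pi(j_{k+1}) < \pi(i_{k+1})$ is the right one (the paper's printed chain $\pi(j_m) < \pi(i_m) < \pi(i_{m+1}) < \pi(j_{m+1})$ is a typo, as it is self-contradictory), and the rest of your verification of properties (1)--(4) matches the paper's reasoning line by line.
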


\begin{proof}
Let $i_0 = i$ and $j_0 = y(i)$ and define $i_m = i_0 + mn$ and $j_m =j_0 + mn$ for $m \in \ZZ$.
Let $A = \{ (i_m, j_m) : m \in \ZZ\}$, $B = \{ (i_m,j_{m+1}) : m \in \ZZ\}$, and $C = \{ (a,b) \in \Cyc(y) : a,b \notin \{i,j\} + n\ZZ\}$.
Then $\Cyc(y) = A \sqcup C$ and $\Cyc(z) = B \sqcup C$.
Theorem~\ref{local-thm} implies that 
\[\pi(j_m) < \pi(i_m) < \pi(i_{m+1}) < \pi(j_{m+1})\qquad\text{for all $m \in \ZZ$.}\]
Let $t=t_{ij}$.
Since $\pi \lessdot \pi t$,
Lemma~\ref{lem-2} implies that $j = j_1$ and $y(j) = i_1$. We have
\[\pi t(j_{m+1}) < \pi t(i_m) < \pi t(j_{m+2}) < \pi t(i_{m+1})\qquad\text{for all $m \in \ZZ$}\]
and $\pi(a) = \pi t(a)$ and $y(a) = z(a)$ for all integers $a \notin \{i,j\} +n \ZZ$.
The conditions in Theorem~\ref{local-thm} therefore hold for $\pi t$ relative to $z$
for all cycles $(a,b),(a',b') \in B$ and all cycles $(a,b),(a',b') \in C$.
Hence, to show that $\pi t \in \cA(z)$, it suffices by Theorem~\ref{local-thm} to check 
for all $(a,b) \in C$ that:
\ben
\item[(1)] If $i <a \leq b < j$ then we do not have $\pi(i) < \pi(a) < \pi(j)$ or $\pi(i) < \pi(b) < \pi(j)$.

\item[(2)] If $a < i < j < b$ then we do not have $\pi(b) < \pi(i) < \pi(a) $ or $\pi(b) < \pi(j) < \pi(a)$.

\item[(3)] If $i < a$ and $j< b$ then $\pi(j) < \pi(b)$.

\item[(4)] If $a < i$ and $b < j$ then $\pi(a) < \pi(i)$.
\een
Property (1) holds by Lemma~\ref{lem-2} since $\pi \lessdot \pi t$.
Property (2) follows from Theorem~\ref{local-thm} since $\pi \in \cA(y)$ and
if $a < i < j < b$ then $a<i<y(i)<b$ and $a<y(j) < j < b$.
To check property (3), suppose $i<a$ and $j<b$. Theorem~\ref{local-thm} implies that 
$\pi(i) < \pi(b)\leq \pi(a)$, that if $y(j) < a$ then $\pi(j) < \pi(y(j)) < \pi(b)$,
and that if $i<a<y(j)$ then we do not have $\pi(b) < \pi(j) <\pi(a)$.
Thus, either $\pi(j) < \pi(b)$ or both $i<a<y(j)<j$ and $\pi(i) < \pi(b) \leq \pi(a) < \pi(j)$.
The latter conditions contradict Lemma~\ref{lem-2},
so $\pi(j) < \pi(b)$ as desired.
Property (4) holds by a symmetric argument.
\end{proof}

\subsection{Computer-assisted arguments}

In principle, the arguments needed to resolve the remaining cases
in the proofs of Theorems~\ref{tau-thm1} and \ref{tau-thm3}
are entirely analogous to the methods in the previous section.
In practice, however, 
these arguments are too complicated to carry out by hand.
We explain in this section how to convert our analysis into a finite computer
calculation.

Recall that $\PP$ is the set of positive integers. Given $m \in \PP$, let $[\pm m] = \{\pm 1,\pm 2,\dots,\pm m\}$.
\begin{definition}
Fix $m \in \PP$ and let $P$, $Q$, and $R$ be formal symbols.
A \emph{virtual permutation} of rank $m$ is a tuple $(\varpi, \cM, \cD, \cS)$ where $\varpi \in S_m$ and
$\cM$, $\cD$, $\cS$ are maps of the following types:
\begin{itemize}
\item $\cM$ is a map from linear extensions of the orders 
$\{1 \prec 2 \prec\dots\prec m\}$, $\{ -1 \prec -2\prec \dots \prec -m\}$, and $\{ -1 \prec 1, -2 \prec 2, \dots, -m \prec m\}$
to sets of linear extensions of the orders 
$\{\varpi_1 \prec \varpi_2 \prec  \dots \prec \varpi_m\}$, $\{-\varpi_1 \prec -\varpi_2 \prec  \dots \prec -\varpi_m\}$,
 and $\{ -1 \prec 1, -2 \prec 2, \dots, -m \prec m\}$.

\item $\cD$ is a map from linear extensions of $\{1\prec 2 \prec\dots\prec m\}$ and $\{ P \prec Q \}$
to sets of linear orders of $[m]\sqcup \{ P,Q\}$ extending $\{\varpi_1 \prec \varpi_2 \prec  \dots \prec \varpi_m\}$.

\item $\cS$ is a map from linear orders of  $[m]\sqcup \{ R \}$ extending $\{1\prec 2\prec\dots\prec m\}$
to sets of linear orders of $[m]\sqcup \{ R \}$ extending $\{\varpi_1 \prec \varpi_2 \prec  \dots \prec \varpi_m\}$.

\end{itemize}
\end{definition}
Let $\vartriangleleft$ denote a generic element of the domain of $\cM$, $\cD$, or $\cS$.
For $\sigma \in S_m$, 
define $\sigma \cM$ to be the map with the same domain as $\cM$, in which $(\sigma \cM)({\vartriangleleft})$
is obtained by applying the transformation with $i \mapsto \sigma(i)$ and $-i \mapsto -\sigma(i)$ for $i \in [m]$
to each linear order in $\cM({\vartriangleleft})$.
For example, if $\sigma = 132$ and $\cM({\vartriangleleft}) = \{ \{ -3 \prec 3 \prec -2 \prec -1 \prec 2 \prec 1\}, \{ -3 \prec -2 \prec -1 \prec 3 \prec 2 \prec 1\}\}$
then
\[
(\sigma\cM)({\vartriangleleft}) = \{ \{ -2 \prec 2 \prec -3 \prec -1 \prec 3 \prec 1\}, \{ -2 \prec -3 \prec -1 \prec 2 \prec 3 \prec 1\}\}
\]
Similarly, define $\sigma\cD$ and $\sigma\cS$ to be the maps with the same domains as $\cD$ and $\cS$,
in which the sets $(\sigma\cD)({\vartriangleleft})$ and $(\sigma\cS)({\vartriangleleft})$ are obtained 
by applying the transformation with $i \mapsto \sigma(i)$ for $i \in [m]$ and $P \mapsto P$ and $Q \mapsto Q$ and $R\mapsto R$
to each linear order in $\cD({\vartriangleleft})$ and $\cS({\vartriangleleft})$, respectively.
Finally, given a virtual permutation $(\varpi,\cM,\cD,\cS)$ of rank $m$ and $\sigma\in S_m$, 
let \[(\varpi,\cM,\cD,\cS)\cdot \sigma = (\sigma^{-1}\varpi, \sigma^{-1}\cM, \sigma^{-1}\cD, \sigma^{-1}\cS).\]
This defines a right action of $S_m$ on the set of virtual permutations of rank $m$.

Fix an involution $y \in I_m$ and define $\Cyc(y) = \{ (a,b) \in [m]\times [m] : a \leq b = y(a)\}$.
A virtual permutation $(\varpi,\cM,\cD,\cS)$ of rank $m$ is a \emph{virtual atom} for $y$
if  
the following conditions hold:
\ben
\item One has $\varpi^{-1} \in \cA(y)$.

\item For each linear order $\vartriangleleft$ in the domain of $\cM$, each linear order ${\prec}\in\cM({\vartriangleleft})$, and
each $(a,b),(a',b') \in \Cyc(y)$ the following conditions hold:
\ben
\item If  $-a \vartriangleleft a'$ and $-b \vartriangleleft b'$ then $-a \prec b'$.
\item If  $a \vartriangleleft -a'$ and $b \vartriangleleft -b'$ then $a \prec -b'$.
\een

\item For each linear order $\vartriangleleft$ in the domain of $\cD$ and each linear order ${\prec}\in\cD({\vartriangleleft})$, 
we have $Q \prec P$ and for each $(a,b) \in \Cyc(y)$ the following conditions hold:
\ben
\item If $a \vartriangleleft P \vartriangleleft Q \vartriangleleft b$ then we do not have $b \prec P \prec a$ or $b\prec Q \prec a$.
\item If $P \vartriangleleft a \vartriangleleft b \vartriangleleft Q$ then we do not have $Q \prec a \prec P$ or $Q\prec b \prec P$.
\item If  $a \vartriangleleft P $ and $b\vartriangleleft Q$ then $a \prec Q$.
\item If  $P \vartriangleleft a$ and  $Q \vartriangleleft b$ then $P \prec b$.
\een

\item For each linear order $\vartriangleleft$ in the domain of $\cD$, each linear order ${\prec}\in\cD({\vartriangleleft})$, 
 and each $(a,b) \in \Cyc(y)$, the following conditions hold:
\ben
\item If $a \vartriangleleft R \vartriangleleft b$ then we do not have $b \prec R \prec a$ or $b\prec R \prec a$.
\item If $b\vartriangleleft R$ then $a \prec R$.
\item If $R \vartriangleleft a$ and  $R \prec b$.
\een
\een

Suppose $\Pi = (\varpi,\cM,\cD,\cS)$ is a virtual permutation of rank $m$. Let $\cE = \cM \sqcup \cD \sqcup \cS$
and define $\Cov(\Pi)$ to be the set of pairs $(i,j)\in [m]\times [m]$ with $i<j$ and $\varpi^{-1}(i) < \varpi^{-1}(j)$,
such that $i$ and $j$ are consecutive in  
each linear order ${\prec} \in \cE({\vartriangleleft})$ for each linear order $\vartriangleleft$ in the domain of $\cE$,
and such that $-i$ and $-j$ are consecutive in each linear order ${\prec} \in \cM({\vartriangleleft})$ for each linear order $\vartriangleleft$ in the domain of $\cM$.

Suppose $\Pi' = (\varpi',\cM',\cD',\cS')$ is another virtual permutation and
$\cE' = \cM' \sqcup \cD' \sqcup \cS'$.
We say that $\Pi'$ contains $\Pi$ and write $\Pi \subset \Pi'$ if $\varpi =\varpi'\in S_m$ and $\cE({\vartriangleleft}) \subset \cE'({\vartriangleleft})$ for each linear order
in the common domain of $\cE$ and $\cE'$.
This definition of containment gives rise to obvious notions of 
virtual permutations that are minimal or maximal with respect to a given property.
If $\sigma \in S_m$ and $\Pi \subset \Pi'$ then clearly $\Pi \cdot \sigma \subset \Pi' \cdot \sigma$.
Likewise, if $\Pi'$ is a virtual atom for $y \in I_m$ and $\Pi \subset \Pi'$,
then $\Pi$ is also a virtual atom for $y$.

We can now reduce our last technical lemma to a finite calculation.

\begin{lemma}\label{tau-thm-lem4}
Assume the conditions in Hypothesis~\ref{lem-hyp}.
In addition, suppose that the sets $\{i,y(i)\} + n \ZZ$ and $\{j,y(j)\} + n\ZZ$  are disjoint. Then $\pi t_{ij} \in \cA(z)$.
\end{lemma}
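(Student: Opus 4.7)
The plan is to reduce the claim $\pi t_{ij} \in \cA(z)$ to a finite computer verification using the virtual permutation framework set up above. The bare-hands analogue of Lemmas~\ref{tau-thm-lem1}--\ref{tau-thm-lem3} is in principle available, but because $\{i,y(i)\}+n\ZZ$ and $\{j,y(j)\}+n\ZZ$ are now disjoint, $\cD_{ij}(y)$ can have up to four distinct vertices, and Definition~\ref{tau-def} therefore admits many subcases. Each subcase produces its own list of order conditions to verify, so bundling them into a uniform computer check is the cleaner route.

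The first step is to apply Theorem~\ref{local-thm} to rewrite the conclusion ``$\pi t_{ij} \in \cA(z)$'' as a conjunction of local order conditions ranging over pairs $(a,b),(a',b') \in \Cyc(z)$. Pairs in which neither cycle meets $\{i,j,y(i),y(j)\} + n\ZZ$ follow automatically from the corresponding conditions for $\pi \in \cA(y)$, since $\pi$ and $\pi t_{ij}$, as well as $y$ and $z$, agree outside $\{i,j\} + n\ZZ$. The remaining conditions, together with the Bruhat cover condition supplied by $\pi \lessdot \pi t_{ij}$ and Lemma~\ref{lem-2}, can be repackaged via Corollary~\ref{local-cor} as order constraints inside a finite window $[m]$: one only needs to track the $\pi$-values of $i$, $j$, $y(i)$, $y(j)$, their translates by $\pm n$, and a bounded number of integers drawn from at most one outside cycle of $y$.

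The second step is to realize each such case as a virtual permutation $\Pi = (\varpi,\cM,\cD,\cS)$ of rank $m$: the permutation $\varpi^{-1}$ records the atom pattern of $\pi$ restricted to the window, $\cM$ records pair-of-cycle interaction constraints coming from condition (3) of Theorem~\ref{local-thm}, $\cD$ records Bruhat cover constraints of the form in Lemma~\ref{lem-2}, and $\cS$ records single-cycle exclusions coming from conditions (1)--(2) of Theorem~\ref{local-thm}. One enumerates, up to the $S_m$-action and the containment order on virtual permutations, the minimal virtual atoms corresponding to configurations with $y \ne z = \tau^n_{ij}(y)$ and $\pi \lessdot \pi t_{ij}$. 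For each such $\Pi$, it suffices to verify that $\Pi \cdot t_{ij}$ is a virtual atom for the involution $z' \in I_m$ corresponding to $z$; since there are only finitely many minimal virtual atoms in each case, this is a finite computation.

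The main obstacle is twofold. First, one must verify that the virtual permutation framework is faithful, meaning that every local configuration recorded by a virtual atom actually lifts to a genuine $\pi \in \cA(y)$ inducing it, so that checking the virtual atom conditions for $\Pi \cdot t_{ij}$ really does imply $\pi t_{ij} \in \cA(z)$ via Theorem~\ref{local-thm}. Second, one must organize the case analysis on $\cD_{ij}(y)$ and on the types of outside cycles carefully enough that the finite enumeration provably exhausts all configurations. Both steps are essentially bookkeeping, but the sheer volume of cases is what makes computer assistance unavoidable here.
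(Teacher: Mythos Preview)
Your overall strategy matches the paper's: encode the local data around $E=\{i,j,y(i),y(j)\}$ as a virtual permutation, use Theorem~\ref{local-thm} and Corollary~\ref{local-cor} to reduce $\pi t_{ij}\in\cA(z)$ to a statement about virtual atoms, and then verify finitely many cases by computer. The paper does exactly this, constructing an explicit virtual permutation $\V(\pi,y,E)$ and checking twelve cases via \cite{code}.

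There are, however, two points where your sketch diverges from what actually works. First, and most substantively, you propose to enumerate the \emph{minimal} virtual atoms. This runs in the wrong direction: being a virtual atom is downward-closed under containment (if $\Pi\subset\Pi'$ and $\Pi'$ is a virtual atom then so is $\Pi$), so to conclude that $\V(\pi,y,E)\cdot(i',j')$ is a virtual atom for $z'$ one needs to check the \emph{maximal} virtual atom $\Pi$ for $y'$ with $(i',j')\in\Cov(\Pi)$ and verify that $\Pi\cdot(i',j')$ is a virtual atom for $z'$. The paper does precisely this and observes that in each of the twelve cases the maximal such $\Pi$ is unique. Checking minimal virtual atoms would not establish the claim for the actual $\V(\pi,y,E)$, which can be strictly larger.

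Second, your description of what $\cM$, $\cD$, $\cS$ encode does not match the framework set up before the lemma. In the paper's construction, $\cM$ records how $E$ interleaves with its own $n$-translates $E-kn$, while $\cD$ and $\cS$ record how $E$ interacts with an \emph{outside} $2$-cycle $(p,q)\in\Cyc(y)$ or fixed point $r$ of $y$, respectively; the Bruhat cover constraint from Lemma~\ref{lem-2} is captured separately by the condition $(i',j')\in\Cov(\Pi)$. Your reading of $\cD$ as ``Bruhat cover constraints'' and $\cS$ as ``single-cycle exclusions'' is not how the data is organized, and would not by itself yield the reduction you need.
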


\begin{proof}
Let $E = \{i,j,y(i),y(j)\}$ and $m=|E|$. 
Write $\phi_E : [m] \to E$ and $\psi_E : E \to [m]$ for the corresponding order-preserving bijections.
The elements of $E$ represent distinct congruence classes modulo $n$ and $y(E) = E$.
Writing $a$, $b$ for generic elements of $[m]$ or $[\pm m]$,
we define $\V(\pi,y,E) = (\varpi,\cM,\cD,\cS)$ to be the minimal virtual permutation with the following properties:
\begin{itemize}
\item $\varpi$ is the permutation in $S_m$ with $\varpi^{-1}(a) < \varpi^{-1}(b)$ if and only if $\pi \circ \phi_E(a) < \pi \circ\phi_E(b)$.

\item For each $k \in \PP$, if $\theta : [\pm m] \to (E-kn) \sqcup E$ 
has $\theta(a) = \phi_E(a)$ and $\theta(-a) = \phi_E(a) - kn$ for $a \in [m]$,
 and $\vartriangleleft$ is the linear order of $ [\pm m]$
with $a \vartriangleleft b$ if and only if $\theta(a) < \theta(b)$,
then $\cM({\vartriangleleft})$ contains the linear order $\prec$ of $[\pm m]$
with $a \prec b$ if and only if $\pi\circ \theta(a) < \pi\circ \theta(b)$.

\item For all integers $p,q \notin E + n \ZZ$ with $p < q=y(q)$, 
if $\theta  : [m]\sqcup \{P,Q\} \to E \sqcup\{p,q\}$ is the bijection with $\theta(a) = \phi_E(a)$ for $a \in [m]$ 
and $\theta(P) = p$ and $\theta(Q) = q$, and $\vartriangleleft$ is the linear order of $[m]\sqcup\{P,Q\}$
with $a\vartriangleleft b$ if and only if $\theta(a) < \theta(b)$,
then $\cD({\vartriangleleft})$ contains the linear order $\prec$ of $[m]\sqcup\{P,Q\}$
with $a \prec b$ if and only if $\pi\circ \theta(a) < \pi\circ \theta(b)$.

\item For all integers $r \notin E + n \ZZ$ with $r=y(r)$, 
if $\theta  : [m]\sqcup \{R\} \to E \sqcup\{r\}$ is the bijection with $\theta(a) = \phi_E(a)$ for $a \in [m]$ 
and $\theta(R) = r$, and $\vartriangleleft$ is the linear order of $[m]\sqcup\{R\}$
with $a\vartriangleleft b$ if and only if $\theta(a) < \theta(b)$,
then $\cS({\vartriangleleft})$ contains the linear order $\prec$ of $[m]\sqcup\{R\}$
with $a \prec b$ if and only if $\pi\circ \theta(a) < \pi\circ \theta(b)$.
\end{itemize}
Let $y' = \psi_E \circ y \circ \phi_E \in I_m$
and  $i' =\psi_E(i)$ and $j' = \psi_E(j)$.
Theorem~\ref{local-thm} and Corollary~\ref{local-cor} imply that $\V(\pi,y,E)$
is a virtual atom for $y'$
and
Lemma~\ref{lem-2} implies
that $(i',j') \in \Cov(\V(\pi,y,E))$.
Since $y(a) = z(a)$ and $w(a) = wt_{ij}(a)$ for all integers $a \notin E + n\ZZ$,
we have $\V(\pi,y,E)\cdot(i',j') = \V(\pi t_{ij}, z, E)$ and
it follows from
Theorem~\ref{local-thm}
that if this virtual permutation is a virtual atom for $z' := \psi_E \circ z \circ \phi_E \in I_m$
then $\pi t_{ij} \in \cA(z)$.
It therefore suffices to show that if $\Pi$ is a virtual atom for $y'$ that 
is maximal among those with $(i',j') \in \Cov(\Pi)$,
then $\Pi\cdot (i',j')$ is a virtual atom for $z'$. 

This is a finite calculation:
there are only 12 possibilities for $y'$, $z'$, and  $1\leq i'< j' \leq m$;
 in each case
there is only one maximal virtual atom $\Pi$ for $y'$ with $(i',j') \in \Cov(\Pi)$;
and it is a straightforward calculation to check that 
 $\Pi\cdot (i',j')$
is a virtual atom for $z'$.
For example, if $y' = (1)(2) \in I_2$ and $i'=1$ and $j'=2$, then $z' = (1,2)$
and we must have $\Pi = (\varpi, \cM,\cD,\cS)$ where
 \ben
 \item[] $\varpi = 12$,
 \item[] $\cM = \begin{cases} \{ -1 \vartriangleleft 1 \vartriangleleft -2 \vartriangleleft 2\} \mapsto \varnothing \\
 \{ -1 \vartriangleleft -2 \vartriangleleft 1 \vartriangleleft 2\} \mapsto \{ \{-1 \prec -2 \prec 1 \prec 2\}\},
 \end{cases}$
 
 \item[] $\cD = \begin{cases} 
 \{  1\vartriangleleft 2 \vartriangleleft P \vartriangleleft Q\} \mapsto \{ \{1 \prec 2 \prec Q \prec P\}\} \\
\{  1\vartriangleleft  P \vartriangleleft 2 \vartriangleleft Q\} \mapsto \{ \{1 \prec 2 \prec Q \prec P\}\} \\
\{ P \vartriangleleft 1\vartriangleleft 2 \vartriangleleft Q\} \mapsto \{ \{1 \prec 2 \prec Q \prec P\},\{1 \prec Q \prec P \prec 2\},\{Q \prec P \prec 1 \prec 2\}\} \\
\{  1 \vartriangleleft P\vartriangleleft Q \vartriangleleft 2\} \mapsto \varnothing \\
\{  P \vartriangleleft  1 \vartriangleleft Q \vartriangleleft 2\} \mapsto \{ \{Q \prec P \prec 1 \prec 2\}\} \\
\{  P \vartriangleleft Q\vartriangleleft 1 \vartriangleleft 2\} \mapsto \{ \{Q \prec P \prec 1 \prec 2\}\},
 \end{cases}$
 
 \item[] $\cS = \begin{cases}
 \{ 1 \vartriangleleft 2 \vartriangleleft R\} \mapsto \{1\prec 2 \prec R\} \\
 \{1 \vartriangleleft R \vartriangleleft 2 \} \mapsto \varnothing \\
 \{ R \vartriangleleft 1 \vartriangleleft 2\} \mapsto \{R \prec 1 \prec 2\}.
 \end{cases}$
 \een
In this case we have $\Pi \cdot (i',j')= (\varpi',\cM',\cD',\cS')$ where
 \ben
 \item[] $\varpi' = 21$,
 \item[] $\cM' = \begin{cases} \{ -1 \vartriangleleft 1 \vartriangleleft -2 \vartriangleleft 2\} \mapsto \varnothing \\
 \{ -1 \vartriangleleft -2 \vartriangleleft 1 \vartriangleleft 2\} \mapsto \{ \{-2 \prec -1 \prec 2 \prec 1\}\},
 \end{cases}$
 
 \item[] $\cD' = \begin{cases} 
 \{  1\vartriangleleft 2 \vartriangleleft P \vartriangleleft Q\} \mapsto \{ \{2 \prec 1 \prec Q \prec P\}\} \\
\{  1\vartriangleleft  P \vartriangleleft 2 \vartriangleleft Q\} \mapsto \{ \{2 \prec 1 \prec Q \prec P\}\} \\
\{ P \vartriangleleft 1\vartriangleleft 2 \vartriangleleft Q\} \mapsto \{ \{2 \prec 1 \prec Q \prec P\},\{2 \prec Q \prec P \prec 1\},\{Q \prec P \prec 2 \prec 1\}\} \\
\{  1 \vartriangleleft P\vartriangleleft Q \vartriangleleft 2\} \mapsto \varnothing \\
\{  P \vartriangleleft  1 \vartriangleleft Q \vartriangleleft 2\} \mapsto \{ \{Q \prec P \prec 2 \prec 1\}\} \\
\{  P \vartriangleleft Q\vartriangleleft 1 \vartriangleleft 2\} \mapsto \{ \{Q \prec P \prec 2 \prec 1\}\},
 \end{cases}$
 
 \item[] $\cS' = \begin{cases}
 \{ 1 \vartriangleleft 2 \vartriangleleft R\} \mapsto \{2\prec 1 \prec R\} \\
 \{1 \vartriangleleft R \vartriangleleft 2 \} \mapsto \varnothing \\
 \{ R \vartriangleleft 1 \vartriangleleft 2\} \mapsto \{R \prec 2 \prec 1\}.
 \end{cases}$
 \een
One can check directly that this is a virtual atom for $z'$.

The relevant analysis for the other 11 cases is similar, but too complicated 
to carry out by hand.
We have written a computer program  to 
enumerate the possible cases and check the required conditions;
our code also generates a human readable, but extremely long and tedious proof of this lemma,
which is available as a 162 page {\tt pdf} file \cite{code}.
Our program's computations show that in every case $\Pi \cdot (i',j')$ is a virtual atom for $z'$,
so we conclude that $wt_{ij} \in \cA(z)$ as desired.
\end{proof}

Combining the preceding lemmas, we may finally prove
Theorem~\ref{tau-thm1}.

\begin{proof}[Proof of Theorem~\ref{tau-thm1}]
Let
$y,z \in \tI_n$ and $\pi \in \cA(y)$. Fix integers  $i<j \not \equiv i \modu n)$
 such that $\pi \lessdot \pi t_{ij}$.
The assertion that $z = \tau^n_{ij}(y) \neq y$ if $\pi t_{ij} \in \cA(z)$ is 
 \cite[Theorem 8.10]{M}.
Conversely, suppose $z = \tau^n_{ij}(y)\neq y$.
If $i \equiv y(j) \modu n)$, then $y$ must be as in Lemmas~\ref{tau-thm-lem1}, \ref{tau-thm-lem2},
or \ref{tau-thm-lem3}, and in these cases we have $\pi t_{ij} \in\cA(z)$ as desired.
If $i \not\equiv y(j)\modu n)$, then the sets
$\{i,y(i)\} + n \ZZ$ and $\{j,y(j)\} + n\ZZ$  are disjoint, so we have $\pi t_{ij} \in \cA(z)$ by Lemma~\ref{tau-thm-lem4}.
\end{proof}

Using similar methods, we can also prove the toggling property described in Theorem~\ref{tau-thm3}.
Suppose $y \in \tI_n$ and $\pi \in \cA(y)$.
Fix integers  $i<j \not \equiv i \modu n)$ such that $\pi \lessdot \pi t_{ij}$ and $y=\tau^n_{ij}(y)$.
Define $k \in \{j, y(j)\}$ and $l \in \{i,y(i)\}$ as in Theorem~\ref{tau-thm3}.

\begin{proof}[Proof of Theorem~\ref{tau-thm3}]
It holds by inspection that $k<l\not\equiv k \modu n)$ and $t_{ij} \neq t_{kl}$,
and it follows from Theorem~\ref{tau-thm1} that $y=\tau^n_{kl}(y)$.
Let $E = \{i,j,y(i),y(j)\} = \{k,l,y(k),y(l)\}$ and $m=|E|$. 
Write $\phi_E : [m] \to E$ and $\psi_E : E \to [m]$ for the corresponding order-preserving bijections.
The elements of $E$ represent distinct congruence classes modulo $n$ and $y(E) = E$.
Define \[\V(\pi,y,E) = (\varpi,\cM,\cD,\cS)\] as in the proof of Theorem~\ref{tau-thm-lem4}.
Let $y' = \psi_E \circ y \circ \phi_E \in I_m$, $i' =\psi_E(i)$, $j' = \psi_E(j)$, $k'=\psi_E(k)$, and $l'=\psi_E(l)$.
Theorem~\ref{local-thm} and Corollary~\ref{local-cor} imply that $\V(\pi,y,E)$
is a virtual atom for $y'$
and
Lemma~\ref{lem-2} implies
that $(i',j') \in \Cov(\V(\pi,y,E))$.
We have \[\V(\pi t_{ij}t_{kl},y,E) = \V(\pi,y,E)\cdot (i',j')(k',l')\] by construction,
and it follows from
Theorem~\ref{local-thm}
that if this virtual permutation is also a virtual atom for $y' $
then $\pi t_{ij}t_{kl} \in \cA(y)$.
It therefore suffices to show that if $\Pi$ is virtual atom for $y'$ 
that is maximal among those with $(i',j') \in \Cov(\Pi)$,
then $\Pi\cdot (i',j')(k',l')$ is a virtual atom for $y$. 

This again reduces to a finite calculation.
There are 8 cases for the involution $y'$, the indices $1\leq i' < j' \leq m$,  and 
the virtual atom $\Pi$,
corresponding to the parts of Proposition~\ref{tech-prop}. 
As in the proof of Theorem~\ref{tau-thm-lem4},
we have written a computer program  
that can enumerate these cases 
and check the required conditions, and which also produces a very long but human readable proof of the theorem (accessible as a 134 page \textsf{pdf} document) \cite{code}.
Our program's computations show that in every case 
$\Pi\cdot (i',j')(k',l')$ is a virtual atom for $y'$,
so we conclude that $\pi t_{ij}t_{kl} \in \cA(y)$.
\end{proof}

\end{document}